\DeclareMathOperator{\arccosh}{arccosh}
\newtheorem{theorem}{Theorem}
\newtheorem*{theorem*}{Theorem}
\newtheorem{lemma}{Lemma}
\newtheorem{example}{Example}
\newtheorem{proposition}{Proposition}
\newtheorem{remark}{Remark}
\newtheorem{definition}{Definition}
\newtheorem*{notation*}{Notation}
\newtheorem*{terminology*}{Terminology}
\DeclareMathOperator{\prox}{prox}
\DeclareMathOperator{\argmin}{argmin}
\newcommand{\bd}{{\operatorname{bdry}}}
\newcommand{\inter}{{\operatorname{int}}}
\newcommand{\Poincare}{Poincar\'e }
\newcommand{\grad}{\nabla}
\newcommand{\ph}{\partial^{\mathsf{h}} }
\DeclareMathOperator{\epi}{epi}
\newcommand{\calM}{M }
\newcommand{\reals}{\mathbb{R}}
\begin{document}
	
	\title{\Large Horospherically Convex Optimization on Hadamard Manifolds\\Part I: Analysis and Algorithms}

	\author{Christopher Criscitiello\thanks{Institute of Mathematics, EPFL, \texttt{christopher.criscitiello@epfl.ch}.  Equal contribution.}
		\and 
		Jungbin Kim\thanks{Department of Industrial Engineering, Seoul National University, \texttt{kjb2952@snu.ac.kr}.  Equal contribution.}
	}
	\date{May 22, 2025}
	
	\maketitle
	
	\begin{abstract}
		
		Geodesic convexity (g-convexity) is a natural generalization of convexity to Riemannian manifolds. However, g-convexity lacks many desirable properties satisfied by Euclidean convexity.  For instance, the natural notions of half-spaces and affine functions are themselves not g-convex.
		Moreover, recent studies have shown that the oracle complexity of geodesically convex optimization necessarily depends on the curvature of the manifold~\cite{hamilton2021nogo,criscitiello2022negative,criscitiello2023curvature}, a computational bottleneck for several problems, e.g., tensor scaling.
		Recently, Lewis \emph{et al.}~\cite{lewis2024horoballs} addressed this challenge by proving curvature-independent convergence of subgradient descent, assuming horospherical convexity of the objective's sublevel sets.
		
		{Using a similar idea, we introduce a generalization of convex functions to Hadamard manifolds, utilizing horoballs and Busemann functions as building blocks (as proxies for half-spaces and affine functions). 
			We refer to this new notion as \emph{horospherical convexity} (h-convexity).}
		We provide algorithms for both nonsmooth and smooth h-convex optimization, which have curvature-independent guarantees \emph{exactly} matching those from Euclidean space; this includes generalizations of subgradient descent and Nesterov's accelerated method.
		Motivated by applications, we extend these algorithms and their convergence rates to minimizing a \emph{sum} of horospherically convex functions, assuming access to a weighted-Fr\'echet-mean oracle.
	\end{abstract}

	\setcounter{tocdepth}{2} 
	{\small 
		\tableofcontents
	}
	
	\newpage
	
	\section{Introduction}
	\label{sec:intro}
	
	We consider the optimization problem:  
	\begin{equation}
		\label{eq:problem-intro}
		\min_{x\in M}\; f(x),
	\end{equation}
	where \( M \) is a Hadamard manifold, i.e., a complete, simply connected Riemannian manifold with non-positive sectional curvature.  This includes Euclidean spaces $M = \mathbb{R}^n$, hyperbolic spaces $M = \mathbb{H}^n$, and the set of positive definite matrices with affine-invariant metric (the Fisher-Rao metric for Gaussian covariance matrices)~\cite{bridson2013metric,skovgaard1984riemgeogaussians}.
	For a comprehensive introduction to optimization on manifolds, see \cite{absil2008optimization, boumal2020intromanifolds}.
	
	Since problem~\eqref{eq:problem-intro} is NP-hard in general (see, for example, \cite{sahni1974computationally,murty1987some}), additional assumptions are required to make it tractable. In the Euclidean setting $M = \mathbb{R}^n$, the classical assumption is that the objective function \( f \) is convex on \( \mathbb{R}^n \). For general Hadamard manifolds, the analogous assumption is \emph{geodesic convexity} (g-convexity), which requires that \( f \) is convex along every geodesic segment (see, for example, \cite{udriste1994convex,zhang2016first}).
	A common application of g-convex optimization is computing intrinsic means or medians, e.g., for phylogenetics and computational anatomy~\cite{karcher1977riemannian,YuanHAG20,fletcher2011horoball,bacak2014convex}. 
	Scaling problems (including matrix, operator and tensor scaling) comprise a broad class of problems that can also be formulated as g-convex optimization~\cite{burgissernoncommutativeoptimization}.  Applications of scaling problems include robust covariance estimation and subspace recovery~\cite{auderset2005angular,wiesel2012gconvexity,zhang2012gconvexity,wiesel2015gconvexity,sra2015conicgeometricoptimspd,ciobotaru2018geometrical,franksmoitra2020}, estimation for matrix normal models~\cite{tang2021integrated,amendola2021,franks2021neartyler}, computing Brascamp--Lieb constants~\cite{sravishnoibracsampliebconstant}, Horn's problem~\cite{franksprescribedmarginals2018,burgissernoncommutativeoptimization,kapovich2009convex}, and the quantum marginals problem~\cite{Walter2013,tensorscaling2018}.
	
	Notably, algorithms for g-convex optimization led to the first polynomial-time algorithms for operator scaling~\cite{Garg2020Operator,allenzhuoperatorsplitting}; yet, these algorithms have so far fallen short of providing fully polynomial-time algorithms for many other scaling problems, e.g., tensor scaling and Horn's problem (the latter fits into our framework, see Section~\ref{sec:prob_example}).\footnote{Significant progress has nonetheless been made on these problems~\cite{franksprescribedmarginals2018,tensorscaling2018}.}
	A key obstruction is that the query complexity of existing algorithms\footnote{Including cutting-plane methods~\cite{rusciano2019riemannian,ellipsoidcriscitiello23b}, interior-point methods based on barriers for the ball~\cite{hnw2023}, and first-order methods~\cite{zhang2016first,zhang2018estimate,ahn2020nesterov,alimisis2021momentum,kim2022accelerated,martinez2022global,martinez2023accelerated}.} for g-convex optimization depends linearly on the distance to an approximate minimizer, which can be exponentially large for tensor scaling and related problems~\cite{franksreichenback2021}. 
	In contrast, this extra linear dependence on the distance to an approximate minimizer is absent in Euclidean convex optimization.\footnote{As a concrete example, consider \emph{array scaling} --- the commutative analogue of tensor scaling --- which corresponds to a convex optimization problem in Euclidean space. 
		Although the distance to an approximate minimizer in array scaling can be exponentially large~\cite{franksreichenback2021}, polynomial-time algorithms are still available~\cite[Thm~3.1, Rem.~3.1]{NEMIROVSKI1999435}. 
		This is because the query complexity of standard convex optimization methods (e.g., the ellipsoid method~\cite{NEMIROVSKI1999435} or interior-point methods~\cite{bürgisser2020interiorpointmethodsunconstrainedgeometric}) depends only logarithmically on the distance to the solution.}
	Recent work~\cite{hamilton2021nogo,criscitiello2022negative,criscitiello2023curvature} has shown that this dependence on the distance to a minimizer is unavoidable in the setting of \emph{general} g-convex optimization, and is due to the \emph{negative curvature} of the underlying Hadamard manifold $M$.
	
	This raises a natural question:  
	\begin{center}
		\emph{Is there a meaningful subclass of g-convex functions whose query complexity does not depend on the curvature of the manifold?}
	\end{center}
	In this paper, we identify such a subclass: the set of \emph{horospherically convex} (h-convex) functions. 
	This class is meaningful in the sense that it covers nontrivial applications (see Section~\ref{sec:prob_example}) and generalizes Euclidean convexity. 
	To be clear, we do \emph{not} resolve the complexity of tensor scaling or similar problems.  However, we believe our work provides one reasonable starting point to this end.
	
	Recent progress in bridging the gap between convexity and g-convexity was made by Lewis \emph{et al.} \cite{lewis2024horoballs}. Instead of the family of convex functions, they consider the family of \emph{quasi-convex} functions. Their crucial insight is to generalize quasi-convexity in a \emph{horospherical} sense rather than a geodesic sense. Specifically, they show that if every sublevel set of $f$ is \emph{horospherically convex} in $M$, then the oracle complexity of the subgradient descent method matches exactly its Euclidean counterpart, without dependence on the curvature of the manifold.
	
	{The problem setting in \cite{lewis2024horoballs} is somewhat less standard, as they assume quasi-convexity of $f$ instead of convexity. In this paper, using a similar idea, we bridge the gap between convex functions and g-convex functions by introducing the concept of \emph{horospherical convexity} (h-convexity) for functions on $M$. To the best of our knowledge, h-convexity has previously been defined only for sets in $M$. The notion of h-convex functions is new to the literature.\footnote{The term \emph{horo-convex function} has appeared in the pure mathematics literature, for example, \cite{mejia2005horocyclically,fillastre2016hyperbolization,veronelli2019boundary}. However, these concepts are not closely related to our notion of h-convex functions.} }
	
	During the preparation of our manuscript, Goodwin \emph{et al.}~\cite{goodwin2024subgradient} independently introduced the concept of h-convexity, which they refer to as \emph{Busemann subdifferentiability}. While some of our observations overlap, our approaches and results differ in key ways, making the two works complementary. A detailed comparison with \cite{goodwin2024subgradient} is provided in Section~\ref{sec:prior} and summarized in Table~\ref{table:concurrent}.

	In Section~\ref{sec:contributions}, we detail the contributions of this paper (Part I), and also the topics considered by Part II. We conclude Section~\ref{sec:intro} by detailing prior and concurrent works (Section~\ref{sec:prior}).
	
	\begin{SCfigure}[50][t]
		\label{fig:horoball}
		\includegraphics[width=0.6\textwidth]{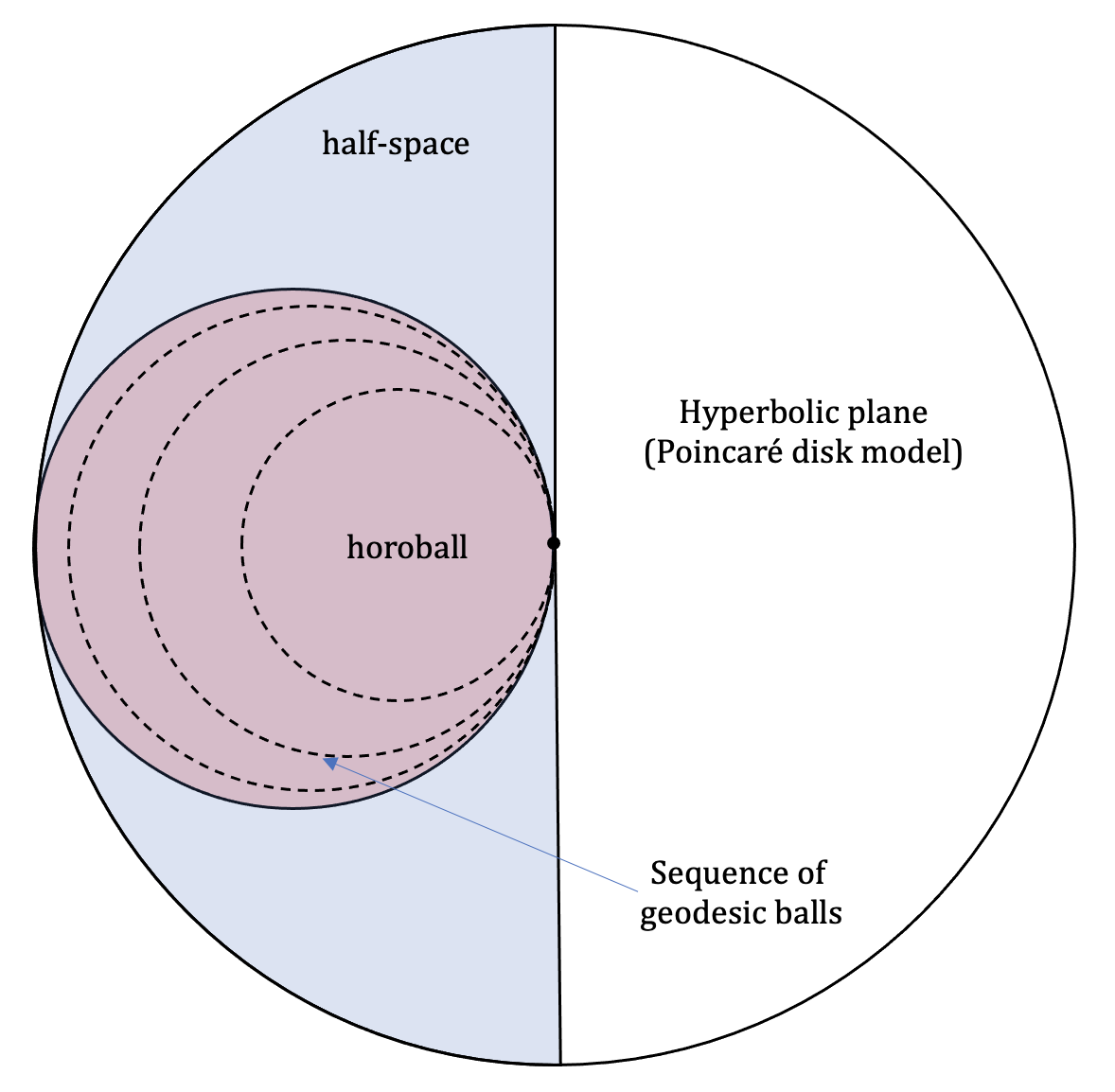}
		\caption{A horoball (purple) and half-space (blue) in the hyperbolic plane (Poincar\'e disk model). A horoball can be defined as the limit of a sequence of geodesic balls (portrayed with dashed boundaries).  In Euclidean spaces, horoballs are simply half-spaces.  On general Hadamard, horoballs differ from half-spaces.}
	\end{SCfigure}
	
	\subsection{Contributions}\label{sec:contributions}
	In Part~I of our paper, we develop the basic theory for h-convex functions and analyze the convergence rates of new (accelerated) first-order methods for minimizing the sum of h-convex functions. An overview of our contributions is provided below.
	
	\begin{itemize}
		\item {\bf Definition of horospherically convex functions.} We define the notion of horospherical convexity for both the non-strongly convex case ($\mu=0$) and the strongly convex case ($\mu>0$). 
		Notably, $\mu$-strongly h-convex functions are naturally defined only using squared distances, without reference to Busemann functions; see Section~\ref{sec:h-convex_functions}. Taking the limit $\mu \to 0$ then recovers h-convexity defined through Busemann functions (Proposition~\ref{prop:sctoc}).
		
		\item {\bf Definition of \emph{smooth} horospherically convex functions.} Since we are working on smooth Hadamard manifolds, it makes sense to consider smooth optimization problems.
		To that end, we introduce the notion of $L$-h-smoothness, a complementary notion to $\mu$-strong h-convexity. $L$-h-smoothness is weaker than the standard notion of $L$-smoothness.  Importantly, strongly g-convex function cannot be globally $L$-smooth~\cite[Appendix~A]{hamilton2021nogo}, but they can be globally $L$-h-smooth. Conveniently, this allows us to work without constraints, unlike in the g-convex setting~\cite{martinez2023accelerated}.

		\item {\bf Horospherically convex analysis.} In Section~\ref{sec:h-convex_analysis}, we study the basic properties of h-convex functions. We show that most (but not all) properties of convex functions can be extended to h-convex functions. For example, h-convexity is preserved under taking the supremum, but not under addition. 
		We also connect the h-convexity of a function on $M$ with the h-convexity of its epigraph on $M\times\mathbb{R}$.
		Finally, we show that h-convexity is preserved under the Moreau envelope.
		When comparing h-convexity and g-convexity, h-convexity generalizes the \emph{outer} and \emph{global} aspects of convexity, while g-convexity generalizes the \emph{inner} and \emph{local} aspects of convexity. 
		
		\item {\bf Horospherically convex optimization.} Since h-convexity is not preserved under addition, the problem of minimizing a single h-convex function has limited applications. Therefore, we consider the more general problem of minimizing the sum of h-convex functions (i.e., $\min \sum f_i$ where $f_i$ are h-convex), assuming access to a Fr\'echet-mean oracle.
		In Section~\ref{sec:prob_example}, we provide examples of optimization problems whose objective functions are sums of h-convex functions.
		These examples motivated the development of our general framework.
		
		\item {\bf Gradient descent and subgradient descent.} As a fundamental algorithm for h-convex optimization, we propose iteratively minimizing a sum of squared distances: given $x_{k}$, define $x_{k+1}$ as the minimizer of a (uniform) Fr\'echet mean, see \eqref{eq:gd} for the precise update rule.
		This algorithm recovers standard gradient descent when $M=\mathbb{R}^n$. 
		However, it differs from standard Riemannian gradient descent $x_{k+1}=\exp_{x_{k}}(-s\nabla f(x_{k}))$, unless the objective function itself is h-convex (not just a sum of h-convex functions).   
		The convergence rates we provide exactly match those for convex optimization.  We also show that in hyperbolic spaces, h-convex algorithms can converge significantly \emph{faster} than their Euclidean counterparts, see Section~\ref{sec:faster_rates}. 
		
		\item {\bf Accelerated first-order methods.} We generalize Nesterov's accelerated gradient methods \cite{nesterov1983method,nesterov2018lectures} and show that their convergence rates are exactly the same as in the Euclidean setting. Unlike generalizations of Nesterov's method in the g-convex setting (e.g., \cite{ahn2020nesterov,kim2022accelerated,martinez2023accelerated}), our algorithms do not require bounds on the sectional curvature of $M$.
	\end{itemize}
	
	In Part II of our paper, we focus on applications of the h-convex framework.
	{First, we consider a few problems arising in practice, most notably the \emph{minimum enclosing ball problem} (also considered by Lewis et al.~\cite{lewis2024horoballs}).
		Then we turn our attention to (smooth) \emph{interpolation} by h-convex functions, in the spirit of~\cite{taylor2017smooth,azagra2017extension}. 
		Even though we do not have a well-developed theory of duality for h-convex functions, we provide \emph{necessary and sufficient} conditions for interpolation by both non-smooth and smooth h-convex functions (by appealing to the Moreau envelope --- see {Section~\ref{sec:moreau}).
			In contrast, interpolation by g-convex functions is much trickier, and necessary and sufficient conditions are unknown~\cite[Sec.~8, App.~C]{criscitiello2023curvature}.
			We use these interpolation conditions to generalize the optimized gradient method (OGM) {\cite{kim2016optimized}} to the h-convex setting.

			\subsection{Prior and concurrent work}
			\label{sec:prior}
			
			The concept of horospherically convex sets (h-convex sets) has a long history in pure mathematics. To the best of our knowledge, this idea dates back to the works \cite{santalo1967horocycles, santalo1968horospheres} in hyperbolic geometry (i.e., $M=\mathbb{H}^n$). It was then naturally extended to the general Hadamard space setting (see, for example, \cite{borisenko2002convex}). However, h-convexity rarely appeared in the applied mathematics literature. {To the best of our knowledge, \cite{lewis2024horoballs} is the only prior work using the concept of h-convex sets to develop optimization theory.} Fletcher \emph{et al.} \cite{fletcher2011horoball} studied algorithms for approximating the \emph{horoball hull} (h-convex hull) in the space of positive definite matrices $\mathrm{PD}(n)$. 
			
			Although not explicitly dealing with h-convexity, the works \cite{hirai2023convex,bento2023fenchel} are closely related works to our paper. 
			They adopt the perspective of viewing the set $CM(\infty)$ of scaled Busemann functions as the dual space of $M$, which is also the perspective we take in our work (see Appendix~\ref{app:dual_space}). 
			More generally, the concepts of horospheres and Busemann functions arise in various areas of mathematics, notably in Fourier analysis on hyperbolic spaces~\cite{helgason1984groups}, and have recently found applications in machine learning~\cite{chami2021horopca,ghadimi2021hyperbolic,sonoda2022fully,bonet2023hyperbolic,fan2023horospherical,berg2024horospherical}.

			\begin{table}
				\centering
				\begin{tabular}{|c|c|} 
					\hline 
					Ours  &  Goodwin \emph{et al.}'s \cite{goodwin2024subgradient} \\
					\hline
					Hadamard manifolds & General Hadamard spaces \\
					Smooth and non-smooth functions & Non-smooth functions \\
					Convex and strongly convex functions  & Non-strongly convex functions\\
					Simultaneously updating iterates & Cyclically updating iterates\\
					Non-accelerated and accelerated methods & Non-accelerated methods\\
					\hline
					Prop.~\ref{prop:basic}~(iv) & Prop.~3.1  \\
					Prop.~\ref{prop:basic}~(vi) & Rem.~3.9 \\
					Thm.~\ref{thm:supporting_horosphere} & Def.~3.2 \\
					Consequence of Thm.~\ref{thm:envelope_rep_h-convex} and Prop.~\ref{prop:prod-B-functions} & Prop.~3.2 \\
					Prop.~\ref{prop:basic}~(vii) & Prop.~3.4 \\
					Prop.~\ref{prop:basic}~(iii) & Thm.~3.14 \\
					Appendix~\ref{app:rmk:basic} & Example~4.1 \\
					Thm.~\ref{thm:subg-descent-c} & Thm.~4.4 \\
					\hline
				\end{tabular}
				\caption{Comparison between our work and \cite{goodwin2024subgradient}. 
				}
				\label{table:concurrent}
			\end{table}
			
			{During the preparation of our manuscript, the closely related work of Goodwin \emph{et al.}~\cite{goodwin2024subgradient} appeared, addressing the same problem we study: minimizing the sum of h-convex functions.  While some of our theoretical findings overlap (see Table~\ref{table:concurrent}), our perspectives and settings differ, making the two works complementary rather than identical. Below, we summarize the key differences.}
			
			To emphasize the strength of their work, a significant difference is that they assume $M$ is a \emph{Hadamard space}, which is a more general concept than a Hadamard manifold. For example, the BHV tree space \cite{billera2001geometry}, used in phylogenetics, is a Hadamard space but not a Hadamard manifold. From an algorithmic perspective, their algorithm updates the iterate \emph{cyclically} (i.e., handling a single $f_i$ at a time), while our gradient update \eqref{eq:gd} must handle all $f_i$ simultaneously. This difference makes their algorithm more practical, although it does not affect the oracle complexity of h-convex optimization.
			(In fact, the oracle complexity of our subgradient method improves on that of~\cite{goodwin2024subgradient} by a factor $m$, where $m$ is the number of h-convex functions in the sum.)  
			
			{The primary distinguishing features of our work (Part I) are: }
			\begin{itemize}
				\item {Introduction of \emph{strongly} h-convex functions.  In particular, $\mu$-strongly h-convex functions are naturally defined only using squared distance, without reference to Busemann functions.  We then observe taking the limit $\mu \to 0$ recovers h-convexity defined through Busemann functions.}
				
				\item Introduction of \emph{smooth} h-convex functions.
				Since we are working on smooth Hadamard manifolds, it makes sense to consider smooth h-convex optimization.
				We introduce $L$-h-smoothness --- a complementary notion to $\mu$-strong h-convexity --- and show that it is weaker than the standard notion of $L$-smoothness.  Then we analyze algorithms in the presence of $L$-h-smoothness.  We also show that the \emph{Moreau envelope} makes h-convex functions $L$-h-smooth.

				\item For smooth h-convex optimization, we propose variants of gradient descent and Nesterov's accelerated method, and prove their curvature-independent\footnote{By curvature-independent, we mean the algorithm performs at least as well as the corresponding Euclidean algorithm, and no lower bound on the sectional curvature of $M$ is needed.} convergence rates (see Section~\ref{sec:momentum}), while only subgradient methods are considered in \cite{goodwin2024subgradient}.
				We also show that when $M$ is significantly curved, rates in h-convex optimization are, surprisingly, much faster than those in convex optimization (Section~\ref{sec:faster_rates}).
			\end{itemize}

			\section{Preliminaries}
			\label{sec:pre}

			In this section, we review some definitions and notations in Riemannian geometry. For a comprehensive review, the readers can consult the following textbooks: \cite{petersen2006riemannian,lee2018introduction} for basic Riemannian geometry, \cite{ballmann1985,ballmann2012lectures,bridson2013metric} for the theory of horoballs and Busemann functions, \cite{rockafellar1997convex,hiriart1996convex,boyd2004convex,rockafellar2009variational} for convex analysis and optimization on $\mathbb{R}^n$, and \cite{udriste1994convex,bacak2014convex,boumal2020intromanifolds} for geodesically convex analysis and optimization on $M$.
			
			\subsection{Basics of Riemannian geometry and Hadamard manifolds}
			\label{sec:pre1}
			
			A \emph{Riemannian manifold} $M$ is a smooth manifold equipped with a positive definite inner product $\langle \cdot, \cdot \rangle_p$ on each tangent space $T_p M$. This structure defines the norm $\| \cdot \|_p$ on $T_p M$, and the distance $d(\cdot,\cdot)$ on $M$. A \emph{Hadamard manifold} is a complete, simply connected Riemannian manifold with non-positive sectional curvature. According to the \emph{Cartan-Hadamard theorem}, every $n$-dimensional Hadamard manifold is diffeomorphic to $\mathbb{R}^n$. 
			Let $M$ be a Hadamard manifold.
			A \emph{geodesic} is a smooth curve $\gamma: I \to M$ with zero acceleration. For any $p \in M$ and $v \in T_p M$, there exists a unique geodesic $\gamma_{p, v}: (-\infty, \infty) \to M$ with $\gamma_{p, v}(0) = p$ and $\gamma_{p, v}'(0) = v$. The \emph{exponential map} $\exp_p: T_p M \to M$ is defined as $\exp(v) = \gamma_{p, v}(1)$, and is a diffeomorphism with the inverse $\exp^{-1}_p$. The parallel transport along the geodesic segment from $p$ to $q$ is denoted by $\Gamma_{p}^{q} \colon T_p M \to T_p M$.
			
			Throughout this paper, $M$ denotes a Hadamard manifold and $n$ denotes its dimension. We denote the open geodesic ball with center $p$ and radius $r$ as $B(p, r)$. Similarly, closed geodesic balls are denoted as $\bar{B}(p, r)$.
			We will often use the following fundamental triangle comparison theorem for Hadamard manifolds.
			\begin{lemma}[Triangle comparison]
				\label{lem:rev_toponogov}
				For any $p,x,y\in M$, the following inequality holds:
				\[
				d(x,y)^{2}\geq\left\Vert \exp_{p}^{-1}(x)-\exp_{p}^{-1}(y)\right\Vert ^{2}=d(x,p)^{2}+d(y,p)^{2}-2\left\langle \exp_{p}^{-1}(x),\exp_{p}^{-1}(y)\right\rangle .
				\]
			\end{lemma}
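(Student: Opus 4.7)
The equality in the displayed formula is just the Euclidean law of cosines applied to the vectors $u := \exp_p^{-1}(x)$ and $v := \exp_p^{-1}(y)$ in the inner product space $T_p M$, using $\|u\|_p = d(p,x)$ and $\|v\|_p = d(p,y)$. So the content of the lemma is the inequality $d(x,y)^2 \ge \|u - v\|_p^2$, and my plan is to derive it as a direct consequence of the classical Toponogov / CAT$(0)$ comparison for Hadamard manifolds.

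First I would form the geodesic hinge at $p$ consisting of the two unit-speed geodesic segments $\gamma_{p,u/\|u\|}|_{[0,\|u\|]}$ and $\gamma_{p,v/\|v\|}|_{[0,\|v\|]}$, so that the endpoints are $x$ and $y$ respectively and the angle at $p$ is $\theta \in [0,\pi]$ determined by $\cos\theta = \langle u, v\rangle_p/(\|u\|_p\|v\|_p)$ (or arbitrary if $u$ or $v$ vanishes, in which case the inequality is trivial). The comparison hinge in $\mathbb{R}^2$ with the same leg lengths $\|u\|_p$, $\|v\|_p$ and the same angle $\theta$ has third-side length, by the Euclidean law of cosines, exactly equal to $\sqrt{\|u\|_p^2 + \|v\|_p^2 - 2\langle u, v\rangle_p} = \|u - v\|_p$.

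Next I would invoke the hinge form of Toponogov's theorem, valid on any complete Riemannian manifold of sectional curvature $\le 0$: the length of the geodesic segment joining the tips of the hinge is at least the length of the corresponding side of the Euclidean comparison hinge. Since that geodesic segment in $M$ has length $d(x,y)$, we obtain $d(x,y) \ge \|u - v\|_p$, and squaring gives the desired inequality. This is standard material and can be cited from, e.g., Petersen~\cite{petersen2006riemannian} or Bridson--Haefliger~\cite{bridson2013metric}; equivalently, it is exactly the CAT$(0)$ inequality, which holds for Hadamard manifolds.

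The only conceptual step to check carefully is the hinge comparison itself, which in these references is deduced from Rauch's comparison theorem (Jacobi field estimates) or, more directly, from the convexity of $t \mapsto d(\gamma(t),\eta(t))^2$ along pairs of geodesics in a Hadamard manifold. Since the whole argument reduces to quoting a classical result, I do not expect any real obstacle; the only pitfall would be conflating the hinge version (which we need here) with the angle version of Toponogov, so I would be explicit in stating the hinge form.
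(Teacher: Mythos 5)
Your proof is correct and follows essentially the same route as the paper: the authors also reduce the claim to the hinge/law-of-cosines form of the comparison theorem, citing the statement $a^{2}\geq b^{2}+c^{2}-2bc\cos\alpha$ from Petersen, which is exactly your Euclidean comparison hinge once one notes that the angle at $p$ satisfies $\cos\alpha = \langle \exp_p^{-1}(x),\exp_p^{-1}(y)\rangle/(d(x,p)\,d(y,p))$. Your write-up simply spells out the identification of the comparison side with $\Vert \exp_p^{-1}(x)-\exp_p^{-1}(y)\Vert$, which the paper leaves implicit.
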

			\begin{proof}
				The following equivalent statement can be found in \cite[\S6.2.2]{petersen2006riemannian}: if there is a geodesic triangle in $M$ with sides lengths $a$, $b$, $c$ and where the angle opposite $a$ is $\alpha$, then we have $a^{2}\geq b^{2}+c^{2}-2bc\cos\alpha.$
			\end{proof}
			
			\subsection{Geodesic convexity}
			\label{sec:pre2}
			
			For $x, y \in M$, denote the geodesic segment from $x$ to $y$ as $\gamma_{x}^{y} : [0,1] \to M$ (i.e., $\gamma_{x}^{y}(0)=x$ and $\gamma_{x}^{y}(1)=y$). A subset $C \subseteq M$ is \emph{geodesically convex} (or {g-convex}) if, for any $x, y \in C$ and $\lambda \in [0,1]$, we have $\gamma_{x}^{y}(\lambda) \in C$. A function $f : M \to \mathbb{R}$ is \emph{geodesically $\mu$-strongly convex} (or $\mu$-strongly g-convex) if, for any $x, y \in M$ and $\lambda \in [0,1]$, the inequality 
			$$f(\gamma_{x}^{y}(1-\lambda)) \leq \lambda f(x) + (1-\lambda) f(y) - \frac{\mu}{2} \lambda(1-\lambda) d(x, y)^{2}$$
			holds. It is well-known that $f$ is $\mu$-strongly g-convex if and only if, for any $y\in M$, there is a tangent vector $v\in T_y M$ such that
			\begin{align}\label{eq:stronglygrad}
				f(x)\geq f(y)+\langle v,\exp^{-1}_{y}(x)\rangle+\frac{\mu}{2}d(x,y)^{2}\quad\textup{for all }x\in M.
			\end{align}
			If $f$ is differentiable, then it is $\mu$-strongly g-convex if and only if the inequality \eqref{eq:stronglygrad} holds everywhere, with $v$ set to the gradient $\nabla f(y)$ of $f$ at $y$. If $f$ is twice-differentiable, then the $\mu$-strong g-convexity condition is equivalent to $\nabla^2 f(x) \succeq \mu I$ for all $x\in M$. When $M = \mathbb{R}^n$ with the standard metric, the notion of g-convexity reduces to the standard notion of convexity, which we will simply refer to as \emph{Euclidean convexity} throughout the paper.
			
			On Hadamard manifolds, distance functions $x \mapsto d(x,p)$ are g-convex, and squared distance functions $x \mapsto d(x,p)^2$ are $2$-strongly g-convex~\cite[Lem.~2]{alimisis2020continuous}, \cite[Cor.~16]{roux2025implicit}.
			The gradient of a (squared) distance points directly away from its minimizer: provided $x \neq p$, we have
			\begin{align}\label{eq:graddist}
				\nabla_x d(x, p)^2 = -2 \exp^{-1}_x(p), \quad \quad \nabla_x d(x, p) = - \frac{\exp^{-1}_x(p)}{d(x,p)}.
			\end{align}

			\subsection{Horoballs and Busemann functions}
			\label{sec:pre3}
			
			Two geodesic rays $\gamma_1, \gamma_2 : [0, \infty) \to M$ are \emph{asymptotic} if $d(\gamma_1(t), \gamma_2(t))$ is bounded. The \emph{boundary at infinity} $M(\infty)$, is the set of equivalence classes of asymptotic unit-speed geodesic rays. The equivalence class containing $\gamma$ is denoted by $\gamma(\infty)$. The set $M(\infty)$ is endowed with the \emph{sphere topology} (also known as the \emph{cone topology} or the \emph{standard topology}).\footnote{This topology is induced by the topology of uniform convergence on compact sets \cite[\S3]{ballmann1985}, \cite[\S II.1]{ballmann2012lectures}. However, it can also be characterized by the following property: fix a reference point $p \in M$, and consider the unit sphere $S_{p}=\{v\in T_{p}M:\|v\|=1\}$. Define a map $\phi_p: S_p \to M(\infty)$ by $\phi_p(v) = \gamma_v(\infty)$, where $\gamma_v$ is the geodesic ray with $\gamma_v(0) = p$ and $\gamma_v'(0) = v$. Then, the map $\phi_p$ is a homeomorphism \cite[p. 22]{ballmann1985}.} 
			
			For a unit-speed geodesic ray $\gamma:[0,\infty)\to M$, the \emph{Busemann function} $B_{\gamma}:M\to \mathbb{R}$ is defined by
			\[
			B_{\gamma}(x)=\lim_{t\rightarrow\infty}\left(d\left(\gamma(t),x\right)-t\right).
			\]
			We also denote $B_{\gamma}$ as $B_{p,\xi}$ with $p=\gamma(0)$ and $\xi=\gamma(\infty)$, or as $B_{p,v}$ with $p=\gamma(0)$ and $v=-\gamma'(0)$.\footnote{Our notation $B_{p,v}$ is not standard in the literature ($B_{p,-v}$ is more common.). We have chosen our notation to align with the affine function $x\mapsto \langle v, x-p \rangle$.} Sublevel sets $B_{\gamma}^{-1}((-\infty,c])$ are called \emph{closed horoballs}, and level sets $B_{\gamma}^{-1}(c)$ are called \emph{horospheres} centered at $\gamma(\infty)$.  Busemann functions have the following properties:\footnote{We are assuming $\|v\|=1$ here. It is straightforward to extend these properties to scaled Busemann functions.}
			\begin{itemize}
				\item When $M=\mathbb{R}^n$, we have $B_{p,v}(x)=\langle v,x-p\rangle$.
				\item $B_{p,v}$ is geodesically convex and $1$-Lipschitz continuous.
				\item $B_{p,v}$ is continuously differentiable with $\|\grad B_{p,v}(x)\|=1$ everywhere.
				\item $B_{p, v}(p) = 0$ and $\nabla B_{p,v}(p) = v$.  For $x = \exp_p(-t v)$ with $t > 0$, we have
				\begin{align}\label{eq:gradBusemann}
					\nabla B_{p,v}(x)=\frac{\exp_{x}^{-1}(p)}{d(x,p)}=\frac{1}{t}\exp_{x}^{-1}(p).
				\end{align}
				\item Gradient flows of $B_{p,v}$ are geodesics. In particular, the gradient flow of $B_{p,v}$ starting at $p\in M$, that is, the solution to $\dot{X}(t)=-\nabla f(X(t))$ with the initial condition $X(0)=p$, can be expressed as $t\mapsto\exp_{p}(-tv)$.
				\item $f:M\to\mathbb{R}$ is a Busemann function if and only if $f+r$ is a Busemann function for any $r\in\mathbb{R}$. 
				If $\gamma_1, \gamma_2$ are asymptotic unit-speed geodesic rays, then $x \mapsto B_{\gamma_1}(x) - B_{\gamma_2}(x)$ is a constant. 
				Two Busemann functions differ by a constant if they have the same gradient at some point.
			\end{itemize}
			
			Since Busemann functions only generalize affine functions with norm $1$, they do not encompass the notion of all affine functions when $M = \reals^n$. Therefore, we extend it by considering non-negative multiples of Busemann functions. For $r\geq 0$, a scaled Busemann function $rB_{p,v}$ is denoted by $B_{p,rv}$. To clarify the terminology, we often refer to (unscaled) Busemann functions as \emph{unit-scale} Busemann functions. The space of scaled Busemann functions can be viewed as the cone $CM(\infty)$ of $M(\infty)$. We assume that the topology on $CM(\infty)$ is induced by the sphere topology on $M(\infty)$. See Appendix~\ref{app:dual_space} for details about the space $CM(\infty)$. See Table~\ref{table:1} for a comparison of horoballs and Busemann functions in Euclidean spaces and general Hadamard manifolds.
			\begin{table}
				\centering
				\begin{tabular}{|c|c|} 
					\hline 
					Hadamard manifolds $M$ &  Euclidean spaces $\mathbb{R}^n$    \\
					\hline
					Scaled Busemann function $B_{p,v}(x)$ & Affine function $\langle v,x-p\rangle$ \\
					Closed horoball $\{x:B_{p,v}(x)\leq r\}$ & Closed half-space $\{x:\langle v,x-p\rangle\leq r\}$  \\
					Cone at infinity $CM(\infty)$ & Dual space $(\mathbb{R}^n)^*$ \\
					\hline
				\end{tabular}
				\caption{Correspondence between the notions related to scaled Busemann functions on $M$ and the notions related to affine functions on $\mathbb{R}^n$.}
				\label{table:1}
			\end{table}
			
			Perhaps, the two most important examples of Hadamard manifolds in applications (besides $\mathbb{R}^n$) are hyperbolic space $\mathbb{H}^n$ and the symmetric space of positive definite matrices $\mathrm{PD}(n)$.
			
			\begin{example}[Hyperbolic space]\label{exampleHyper}
				\textup{
					Hyperbolic space $\mathbb{H}^n$ is the $n$-dimensional Hadamard manifold with constant sectional curvature $-1$; see, e.g.,~\cite{ratcliffe2019hyperbolic}.
					For simplicity, focus on the hyperbolic plane $\mathbb{H}^2$, modeled by the open unit disk $M = B(0,1) \subseteq \mathbb{R}^2$ equipped with the Riemannian metric $\langle u, v \rangle_x = \frac{4 u^\top v}{(1 - \|x\|^2)^2},$
					known as the Poincar\'e disk model.
					Isometries in this model correspond to M\"obius transformations that preserve the unit disk.
					In this setting, geodesic balls are Euclidean balls contained in the unit disk, while \emph{horoballs} are Euclidean balls \emph{tangent} to the boundary circle: see Figure~\ref{fig:horoball}.
					Explicit formulas for unit-speed geodesics and Busemann functions in this model are given in Appendix~\ref{sec:h2}.
					These concepts extend naturally to the Poincaré \emph{ball} model of $\mathbb{H}^n$ for $n > 2$.}
				
			\end{example}
			
			\begin{example}[Positive definite matrices.]\label{examplePD}
				\textup{
					Equip the space of real symmetric positive definite matrices $\mathrm{PD}(n)$ with the Riemannian metric $\langle X, Y \rangle_P = \mathrm{Tr}(P^{-1} X P^{-1} Y),$
					where $P \in \mathrm{PD}(n)$ and $X, Y$ are symmetric matrices. 
					This is known as the \emph{affine-invariant metric}; it also arises as the Fisher-Rao metric for covariance estimation for multivariate Gaussians. Under this metric, $\mathrm{PD}(n)$ becomes a $\frac{n(n+1)}{2}$-dimensional Hadamard manifold and a noncompact symmetric space. Its isometry group acts via $P \mapsto g P g^\top$ for $g \in \mathrm{GL}(n)$.
					Unit-speed geodesics emanating from the identity matrix are given by $t \mapsto \exp(t X),$
					where $X$ is a symmetric matrix with Frobenius norm $\|X\| = 1$.
					Euclidean spaces, hyperbolic spaces, $\mathrm{SPD}(n) = \{P \in \mathrm{PD}(n) : \det(P) = 1\}$, and the set of \emph{complex} positive definite matrices (of size $n/2 \times n/2$, assuming $n$ is even) can all be realized as totally geodesic submanifolds of $\mathrm{PD}(n)$~\cite[Thm 10.58]{bridson2013metric}.}
				
				\textup{
					Let $\lambda$ be a diagonal matrix with strictly decreasing entries: $\lambda_{11} > \lambda_{22} > \cdots > \lambda_{nn}$.
					The Busemann function associated with the geodesic ray $t \mapsto \exp(t \lambda)$ is given by}
				\begin{align}\label{busemannPDn}
					b_\lambda(g \exp(\mu) g^\top) = \langle \lambda, \mu \rangle = \mathrm{Tr}(\lambda \mu),
				\end{align}
				\textup{for any diagonal matrix $\mu$, and any upper triangular matrix $g$ with ones on the diagonal~\cite[Thm 10.69]{bridson2013metric}.
					Formulas for most other Busemann functions and horoballs in $\mathrm{PD}(n)$ can be derived via the isometries $P \mapsto g P g^\top$.
					Alternatively, see~\cite[Lem 2.32]{hirai2023convex} for an expression involving determinants of minors.
					For further background on the geometry of $\mathrm{PD}(n)$, see~\cite[Sec II.10]{bridson2013metric}.}

			\end{example}

			\section{Horospherically convex analysis}
			\label{sec:h-convex_analysis}
			
			In this section, we study the basic properties of horospherically convex functions.

			\subsection{Basic facts and definitions}
			\label{sec:h-convex_functions}

			We start with the definition of horospherically convex functions.
			\begin{definition}[Horospherically convex functions]
				\label{def:h-convex_funtion}\ 
				\begin{enumerate}[label=\textup{(\roman*)}]
					\item A function $f \colon M \to \mathbb{R}$ is \emph{horospherically convex} (h-convex) if for all $y \in M$, there exists a tangent vector $v \in T_y M$ such that
					\begin{equation}
						\label{eq:defining_h-convex}
						f(x) - f(y) \geq B_{y,v}(x) \quad \forall x \in M.
					\end{equation}
					For $y\in M$, we refer to $v \in T_y M$ such that \eqref{eq:defining_h-convex} holds as a \emph{horospherical subgradient} (h-subgradient) of $f$ at $y$. We refer to the set of all h-subgradients of $f$ at $y$ as the \emph{horospherical subdifferential} (h-subdifferential) of $f$ at $y$, and denote it by $\ph f(y)$.
					\item A function $f \colon M \to \mathbb{R}$ is \emph{horospherically $\mu$-strongly convex} ($\mu$-strongly h-convex) if for all $y \in M$, there exists a tangent vector $v \in T_y M$ such that
					\begin{equation}
						\label{eq:defining_strong-h-convex}
						f(x)-f(y)\geq Q_{y,v}^{\mu}(x):=-\frac{1}{2\mu}\|v\|^{2}+\frac{\mu}{2}d\left(\exp_{y}\left(-\frac{1}{\mu}v\right),x\right)^{2} \quad  \forall x \in M.
					\end{equation}
					For $y\in M$, we denote the set of $v\in T_y M$ such that \eqref{eq:defining_strong-h-convex} holds by $\ph_{\mu} f(y)$.\footnote{One might instead try to define the notion of strong h-convexity by using the function $\tilde{Q}_{y,v}^{\mu}(x) := B_{y,v}(x) + \frac{\mu}{2} d(x, y)^2$ in the place of $Q_{y,v}^{\mu}(x)$. Note that $\tilde{Q}_{y,v}^{\mu}(x)\geq Q_{y,v}^{\mu}(x)$ by Lemma~\ref{lem:horotriangle}. Thus, this alternative definition is stricter than our definition of $\mu$-strong h-convexity. In fact it is too restrictive, since squared distance functions $x\mapsto \frac{\mu}{2}d(x,p)^2$ are not necessarily $\mu$-strongly convex in this sense.} 
				\end{enumerate}
			\end{definition}
			At first glance, the two definitions may seem unrelated. However, the following proposition shows that they are seamlessly connected (see Appendix~\ref{app:prop:sctoc} for the proof).
			\begin{proposition}
				\label{prop:sctoc}
				The right-hand side $Q_{y,v}^{\mu}(x)$ of \eqref{eq:defining_strong-h-convex} converges pointwise to the right-hand side $B_{y,v}(x)$ of \eqref{eq:defining_h-convex} as $\mu\to 0$.
			\end{proposition}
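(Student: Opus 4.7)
The plan is to reduce the limit to the defining limit of the Busemann function by a change of variables, using the factorization $a^2 - b^2 = (a-b)(a+b)$.

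Fix $x, y \in M$ and $v \in T_y M$. The case $v = 0$ is immediate: one has $Q_{y,0}^\mu(x) = \tfrac{\mu}{2}\, d(y,x)^2 \to 0 = B_{y,0}(x)$ as $\mu \to 0$. For $v \neq 0$, set $u = v/\|v\|$ and introduce the parameter $s = \|v\|/\mu$, so that $\mu \to 0^+$ corresponds to $s \to \infty$. Let $\gamma(\tau) = \exp_y(-\tau u)$ be the unit-speed geodesic ray defining $B_{y,v}$. Then $\exp_y(-\mu^{-1}v) = \gamma(s)$, and substituting $\mu = \|v\|/s$ into the definition of $Q_{y,v}^\mu(x)$ gives
\[
Q_{y,v}^{\mu}(x) = -\frac{s\|v\|}{2} + \frac{\|v\|}{2s}\, d(\gamma(s), x)^2 = \frac{\|v\|}{2s}\bigl(d(\gamma(s), x) - s\bigr)\bigl(d(\gamma(s), x) + s\bigr).
\]

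The remaining step is to invoke the definition of the Busemann function. By definition, $d(\gamma(s), x) - s \to B_{y,u}(x)$ as $s \to \infty$, so in particular $d(\gamma(s), x) + s = 2s + B_{y,u}(x) + o(1)$ and consequently $\tfrac{d(\gamma(s), x) + s}{2s} \to 1$. Multiplying the two factors, we conclude
\[
Q_{y,v}^{\mu}(x) \;=\; \|v\| \cdot \bigl(d(\gamma(s), x) - s\bigr)\cdot \frac{d(\gamma(s), x) + s}{2s} \;\longrightarrow\; \|v\|\, B_{y,u}(x) = B_{y,v}(x),
\]
which is the claimed pointwise convergence.

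The only step that requires any thought is the algebraic rewriting: observing that the constant term $-\tfrac{1}{2\mu}\|v\|^2 = -\tfrac{\|v\|}{2s}\cdot s^2$ combines with $\tfrac{\|v\|}{2s}d(\gamma(s),x)^2$ into a difference of squares. Once that factorization is in place, the convergence is literally the definition of $B_{y,u}$, so there is no substantive obstacle; the argument fits in a few lines and the proof in the appendix is expected to be essentially the computation above.
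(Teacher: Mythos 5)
Your proof is correct and follows essentially the same route as the paper: the same reparametrization $\mu \mapsto s$ reducing the limit to the defining limit of the Busemann function along the ray $\gamma$. The only difference is cosmetic --- the paper cites the identity $\lim_{t\to\infty}(d(p,\gamma(t))-t)=\lim_{t\to\infty}\tfrac{d(p,\gamma(t))^2-t^2}{2t}$ from Ballmann, whereas you re-derive it in place via the difference-of-squares factorization (and you also handle $v=0$ explicitly, which the paper's substitution $t=\|v\|s$ tacitly skips).
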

			In light of this result, we consider h-convexity as a special case of the $\mu$-strong h-convexity with $\mu = 0$. 
			As a counterpart to the list of properties of Busemann functions in Section~\ref{sec:pre3}, we present some straightforward properties of the functions $Q_{p,v}^{\mu}(x)$ here.
			\begin{itemize}
				\item When $M=\mathbb{R}^n$, we have $Q_{p,v}^{\mu}(x)=\langle v,x-p\rangle+\frac{\mu}{2}\|x-p\|^2$.
				\item $Q_{p,v}^{\mu}$ is $\mu$-strongly geodesically convex.
				\item $Q_{p,v}^{\mu}(p) = 0$ and $\nabla Q_{p,v}^{\mu}(p) = v$.
			\end{itemize}
			
			We summarize some basic properties of (strongly) h-convex functions in the following proposition, whose proof can be found in Appendix~\ref{app:prop:basic}.
			\begin{proposition}[Properties of horospherically convex functions]
				\label{prop:basic} \ 
				\begin{enumerate}[label=\textup{(\roman*)}]
					\item If $0\leq \mu_1\leq \mu_2$, then the set $\ph_{\mu_2}f(y)$ is a subset of the set $\ph_{\mu_1}f(y)$.
					\item If $f$ is $\mu_2$-strongly h-convex and $0\leq \mu_1\leq \mu_2$, then $f$ is $\mu_1$-strongly h-convex.
					\item For any $\mu\geq 0$, the set $\ph_{\mu} f(y)$ is a subset of the (geodesic) subdifferential $\partial f(y)$.
					\item If $f:M\to\mathbb{R}$ is $\mu$-strongly h-convex, then $f$ is $\mu$-strongly g-convex.
					\item If $f:M\to\mathbb{R}$ is differentiable and $\mu$-strongly h-convex, then $\ph_{\mu} f(y)=\{\grad f(y)\}$ for all $y\in M$.
					\item The supremum of $\mu$-strongly h-convex functions is also $\mu$-strongly h-convex.
					\item If $f:M\to\mathbb{R}$ is h-convex and $g:\mathbb{R}\to\mathbb{R}$ is increasing and convex, then $g\circ f$ is h-convex.
					\item If $f:M\to\mathbb{R}$ is a $\mu$-strongly h-convex function and $r$ is a non-negative constant, then the function $rf$ is $(r\mu)$-strongly h-convex.
				\end{enumerate}
			\end{proposition}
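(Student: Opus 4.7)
The plan is to prove (i)--(v) via a single pivotal monotonicity lemma together with one application of the triangle comparison (Lemma~\ref{lem:rev_toponogov}), then handle (vi)--(viii) through classical convex-analytic arguments adapted to the h-convex setting.

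\textbf{Key lemma.} For fixed $y \in M$, $v \in T_y M$, and $x \in M$, the map $\mu \mapsto Q_{y,v}^{\mu}(x)$ is non-decreasing on $(0, \infty)$, and tends to $B_{y,v}(x)$ as $\mu \downarrow 0$. The limit is exactly Proposition~\ref{prop:sctoc}. For monotonicity, I would parameterize $p_\mu = \exp_y(-v/\mu)$ and differentiate, using $\dot{p}_\mu = \mu^{-2}\Gamma_y^{p_\mu} v$ and $\nabla_p d(p,x)^2 = -2\exp_p^{-1}(x)$, to obtain
\[
\frac{d}{d\mu} Q_{y,v}^{\mu}(x) = \frac{\|v\|^2}{2\mu^2} + \frac{d(p_\mu, x)^2}{2} - \frac{1}{\mu}\langle \Gamma_y^{p_\mu} v, \exp_{p_\mu}^{-1}(x)\rangle.
\]
Applying Cauchy--Schwarz (noting $\|\Gamma_y^{p_\mu}v\| = \|v\|$ and $\|\exp_{p_\mu}^{-1}(x)\| = d(p_\mu, x)$), the right-hand side is bounded below by $\frac{1}{2}(\|v\|/\mu - d(p_\mu, x))^2 \geq 0$.

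Given this lemma, (i) follows because $v \in \ph_{\mu_2} f(y)$ yields $f(x) - f(y) \geq Q_{y,v}^{\mu_2}(x) \geq Q_{y,v}^{\mu_1}(x)$, and (ii) is an immediate reformulation. For (iii) and (iv), I would apply the triangle comparison to the triple $(p_\mu, y, x)$ to deduce $Q_{y,v}^{\mu}(x) \geq \langle v, \exp_y^{-1}(x)\rangle + \frac{\mu}{2}d(x,y)^2$ (for $\mu = 0$, the analogous bound $B_{y,v}(x) \geq \langle v, \exp_y^{-1}(x)\rangle$ follows from g-convexity of the Busemann function whose gradient at $y$ is $v$); this single inequality yields both (iii) (subgradient inclusion) and (iv) ($\mu$-strong g-convexity) at once. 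Part (v) then combines (iii), (iv), and the standard fact that for differentiable g-convex functions $\partial f(y) = \{\nabla f(y)\}$, using non-emptiness of $\ph_\mu f(y)$ guaranteed by strong h-convexity.

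For (vi), given $f = \sup_i f_i$ and $y$ where the supremum is attained at some index $i^*$, any $v \in \ph_\mu f_{i^*}(y)$ satisfies $f(x) \geq f_{i^*}(x) \geq f_{i^*}(y) + Q_{y,v}^{\mu}(x) = f(y) + Q_{y,v}^{\mu}(x)$, so $v \in \ph_\mu f(y)$. Part (vii) chains the h-subgradient inequality for $f$ with the convex subgradient inequality for $g$, using $g'(f(y)) \geq 0$ (since $g$ is increasing) and the scaling identity $rB_{y,u} = B_{y,ru}$ to conclude $g'(f(y)) u \in \ph(g \circ f)(y)$. Part (viii) is an immediate check of the scaling identity $r Q_{y,v}^{\mu}(x) = Q_{y,rv}^{r\mu}(x)$, verified directly from the definition.

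The main obstacle is establishing the monotonicity lemma cleanly --- the Cauchy--Schwarz trick above is the slickest approach I know, and without it parts (i)--(v) become awkward. A secondary subtlety is (vi) when the supremum is not attained at $y$; this case requires either restricting to families in which the sup is attained or a closure argument invoking local boundedness of h-subdifferentials, inherited from the local Lipschitz regularity of strongly g-convex functions on Hadamard manifolds.
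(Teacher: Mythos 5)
Your proposal for parts (i)--(v), (vii), and (viii) follows essentially the same route as the paper: the same monotonicity computation for $\mu \mapsto Q_{y,v}^{\mu}(x)$ (the paper writes the derivative directly as the perfect square $\frac{1}{2}\|\frac{1}{\mu}\Gamma_{y}^{p_\mu}v-\exp^{-1}_{p_\mu}(x)\|^{2}$, which is the tighter form of your Cauchy--Schwarz bound), the same inequality $Q_{y,v}^{\mu}(x)\geq\langle v,\exp_{y}^{-1}(x)\rangle+\frac{\mu}{2}d(x,y)^{2}$ for (iii)--(iv) (the paper derives it from $\mu$-strong g-convexity of $Q_{y,v}^{\mu}$ rather than from Lemma~\ref{lem:rev_toponogov} applied at $y$, but the two derivations are interchangeable), and the same chaining and scaling arguments for (vii) and (viii). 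These parts are correct.

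The one genuine incompleteness is part (vi). Your argument covers only the case where the supremum is attained at $y$, and you defer the general case to ``a closure argument invoking local boundedness of h-subdifferentials.'' That is indeed the right idea and is what the paper does: pick $i_k$ with $f_{i_k}(y)\geq f(y)-1/k$, take $v_k\in\ph_{\mu} f_{i_k}(y)$, and show $\{\|v_k\|\}$ is bounded using local Lipschitz continuity of the g-convex function $f$. But the step you leave unaddressed is passing to the limit: after extracting a convergent subsequence $v_k\to v$, one must justify that $B_{y,v_k}(x)\to B_{y,v}(x)$ for each fixed $x$. This is not automatic --- it is the content of Lemma~\ref{lem:B-is-conti}, which requires identifying $T_yM$ with the cone at infinity $CM(\infty)$ equipped with the sphere topology (Appendix~\ref{app:dual_space}); with other natural topologies on the boundary at infinity (e.g., the one induced by the Tits metric) the argument would break down. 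For the strongly convex case ($\mu>0$) the analogous continuity of $v\mapsto Q_{y,v}^{\mu}(x)$ is elementary, so the subtlety is confined to $\mu=0$. If you fold in this continuity statement, your closure argument closes the gap and matches the paper's proof.
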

			Unlike geodesic convexity, we emphasize that horospherical convexity is \emph{not preserved under addition} in general. In particular, this shows that the inclusion $\ph f(y)\subseteq \partial f(y)$ may be proper. See Appendix~\ref{app:rmk:basic} for details.  
			Similarly, the restriction of an h-convex function to a totally geodesic submanifold is not h-convex in general.  This is because a scaled Busemann function on the product manifold $M \times M$ becomes a sum of two scaled Busemann functions (see Proposition~\ref{prop:prod-B-functions}) when restricted to the diagonal submanifold $\{(x, x) : x \in M\}$.
			
			Throughout the paper, we will also use the following ``horospherical law of cosines'':\footnote{Compare the inequality in Lemma~\ref{lem:rev_toponogov} with the inequality~\eqref{eq:eq1tri}. When $M = \reals^n$ both inequalities become equalities, and both reduce to the Euclidean law of cosines.}
			\begin{lemma}\label{lem:horotriangle}
				For $x, y \in M$, $v \in T_y M$ and $\mu > 0$, we have
				\begin{align}
					\label{uno}
					Q_{y, v}^L(x) &\leq B_{y,v}(x) + \frac{L}{2} d(x, y)^2, \\
					\label{duo}
					Q_{y, v}^L(x) &\leq Q_{y,v}^\mu(x) + \frac{L-\mu}{2} d(x, y)^2.
				\end{align}
				\noindent In particular, for $x, y, p \in M$ we have
				\begin{align}\label{eq:eq1tri}
					d(p, x)^2 \leq d(p, y)^2 + d(x, y)^2 + 2 B_{y,-\exp^{-1}_{y}(p)}(x).
				\end{align}
			\end{lemma}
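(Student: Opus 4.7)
My approach is to establish \eqref{eq:eq1tri} first, then recover \eqref{uno} from it as a specialization, and prove \eqref{duo} separately via the $2$-strong geodesic convexity of the squared distance. The ingredients I rely on are the g-convexity and explicit gradient formula \eqref{eq:gradBusemann} for Busemann functions, the triangle-comparison Lemma~\ref{lem:rev_toponogov}, and the $2$-strong g-convexity of $q \mapsto d(x,q)^2$ recalled in Section~\ref{sec:pre2}.

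To prove \eqref{eq:eq1tri}, assume $p \neq y$ (the case $p=y$ is trivial) and set $u = -\exp_y^{-1}(p)/d(y,p)$, so that $p$ lies on the unit-speed geodesic ray $t \mapsto \exp_y(-tu)$ at $t = d(y,p)$. By Section~\ref{sec:pre3}, $B_{y,u}$ is g-convex, takes value $-d(y,p)$ at $p$, and has gradient $\exp_p^{-1}(y)/d(y,p)$ at $p$ by \eqref{eq:gradBusemann}. The g-convex subgradient inequality for $B_{y,u}$ at $p$, applied at $x$, reads
\[
B_{y,u}(x) \;\geq\; -d(y,p) + \tfrac{1}{d(y,p)}\langle \exp_p^{-1}(y), \exp_p^{-1}(x)\rangle,
\]
and Lemma~\ref{lem:rev_toponogov} with reference point $p$ gives $\langle \exp_p^{-1}(y), \exp_p^{-1}(x)\rangle \geq \tfrac{1}{2}(d(p,x)^2 + d(p,y)^2 - d(x,y)^2)$. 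Combining these and multiplying through by $d(y,p)$ (using $B_{y,-\exp_y^{-1}(p)} = d(y,p)\,B_{y,u}$) produces \eqref{eq:eq1tri}.

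Inequality \eqref{uno} is now immediate by specializing \eqref{eq:eq1tri} to $p = p_L := \exp_y(-v/L)$: since $-\exp_y^{-1}(p_L) = v/L$, we have $B_{y,-\exp_y^{-1}(p_L)} = \tfrac{1}{L} B_{y,v}$ and $d(p_L,y)^2 = \|v\|^2/L^2$, so multiplying by $L/2$ and subtracting $\|v\|^2/(2L)$ from both sides recovers \eqref{uno}.

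For \eqref{duo}, the key observation is that $p_L$ lies on the geodesic from $y$ to $p_\mu := \exp_y(-v/\mu)$ at parameter $\lambda = \mu/L \in [0,1]$ (using $L \geq \mu$). Applying $2$-strong g-convexity of $q \mapsto d(x,q)^2$ along this geodesic yields
\[
d(x, p_L)^2 \;\leq\; \tfrac{L-\mu}{L}\, d(x,y)^2 + \tfrac{\mu}{L}\, d(x, p_\mu)^2 - \tfrac{\mu(L-\mu)}{L^2}\, d(y, p_\mu)^2.
\]
Substituting $d(y,p_\mu)^2 = \|v\|^2/\mu^2$, multiplying by $L/2$, and consolidating the constant terms via the identity $\tfrac{\|v\|^2}{2L} + \tfrac{(L-\mu)\|v\|^2}{2L\mu} = \tfrac{\|v\|^2}{2\mu}$ gives exactly \eqref{duo}. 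The only delicate part is this last algebraic bookkeeping of constants; conceptually the argument is clean and I anticipate no substantive obstacle.
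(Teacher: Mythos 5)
Your proof is correct, but it takes a genuinely different route from the paper's. The paper proves \eqref{uno} and \eqref{duo} by a single uniform argument: the function $x \mapsto B_{y,v}(x) + \frac{L}{2}d(x,y)^2$ (respectively $x \mapsto Q_{y,v}^{\mu}(x) + \frac{L-\mu}{2}d(x,y)^2$) is $L$-strongly g-convex with minimizer at $p_L = \exp_y(-v/L)$, and evaluating the strong-convexity inequality \eqref{eq:stronglygrad} at that minimizer yields $Q_{y,v}^L$ as a lower bound; \eqref{eq:eq1tri} is then read off from \eqref{uno} at $L=1$. You invert this logic: you prove \eqref{eq:eq1tri} first, as the primitive statement, by combining the gradient inequality for the g-convex Busemann function $B_{y,u}$ \emph{at the point $p$} (where its value $-d(y,p)$ and gradient $\exp_p^{-1}(y)/d(y,p)$ are explicit) with the triangle comparison of Lemma~\ref{lem:rev_toponogov}; then \eqref{uno} follows by the scaling $p = \exp_y(-v/L)$, and \eqref{duo} requires a separate argument via the two-point $2$-strong g-convexity of $q \mapsto d(x,q)^2$ along the geodesic through $y$, $p_L = \gamma(\mu/L)$, and $p_\mu$ (your constant bookkeeping checks out: $\frac{1}{2L} + \frac{L-\mu}{2L\mu} = \frac{1}{2\mu}$). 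The paper's argument is more economical --- one mechanism handles both inequalities --- while yours isolates the geometric content of \eqref{eq:eq1tri} as a standalone ``reverse law of cosines'' and avoids having to identify the minimizer of a composite function, at the price of a second, structurally different argument for \eqref{duo}. One small remark: your proof of \eqref{duo} explicitly uses $L \geq \mu$ (so that $p_L$ lies on the segment from $y$ to $p_\mu$); the paper's proof also implicitly requires this (otherwise the composite function is not $L$-strongly g-convex), so this is not a defect, but it is worth flagging since the lemma statement does not state the constraint.
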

			\begin{proof}
				Let us just prove~\eqref{uno}: the proof for~\eqref{duo} is identical.
				The function $x \mapsto B_{y,v}(x) + \frac{L}{2} d(x, y)^2$ is $L$-strongly g-convex with a minimizer at $p = \exp_{y}(-\frac{1}{L} v)$, because its gradient is zero at $p$ (use equations~\eqref{eq:gradBusemann} and~\eqref{eq:graddist}).
				Therefore~\eqref{eq:stronglygrad} yields
				$$B_{y,v}(x) + \frac{L}{2} d(x, y)^2 \geq B_{y,v}(p) + \frac{L}{2} d(y, p)^2 + \frac{L}{2}d(x, p)^2 = - \frac{1}{2L}\|v\|^2 + \frac{L}{2} d(x, p)^2 = Q_{y, v}^L(x).$$
				Lastly, observe that~\eqref{eq:eq1tri} comes from~\eqref{uno} setting $L=1$ and $p = \exp_y(-v)$.
			\end{proof}
			
			\begin{remark}
				
				From a geometric perspective, h-convexity is more natural than g-convexity, at least in some aspects.
				For example, if $f$ is $\mu$-strongly h-convex then for any $x\in M$, the minimizer $x^*$ lies in the geodesic ball 
				\begin{align}\label{sccccc1}
					\bar{B}\left(x^{++},\frac{1}{\mu}\left\Vert \nabla f(x)\right\Vert \right),
					\quad \textup{with } x^{++} = \exp_x\Big(-\frac{1}{\mu} \nabla f(x)\Big).
				\end{align}
				On the other hand, if $f$ is $\mu$-strongly g-convex then one can only guarantee that the minimizer $x^*$ is contained in the image of a non-centered ball under the exponential map:
				\begin{align}\label{sccccc2}
					x^* \in \exp_{x}\left(\bar{B}\left(\frac{1}{\mu} \nabla f(x),\frac{1}{\mu}\left\Vert \nabla f(x)\right\Vert \right)\right).
				\end{align}
				This set includes the geodesic ball \eqref{sccccc1} by Lemma~\ref{lem:rev_toponogov}, and one may view \eqref{sccccc2} as a ball which is stretched along the directions perpendicular to $\grad f(x)$. 
				This set aligns more with the geometry of $T_{x}M$ than with the geometry of $M$.
				Taking the limit \( \mu \to 0 \) further highlights this distinction: in the h-convex case~\eqref{sccccc1}, the containing set becomes a horoball (which is h-convex), whereas in the g-convex case~\eqref{sccccc2}, it becomes a ``half-space'' (which is not g-convex; see Figure~\ref{fig:horoball}).

				This geometric viewpoint --- characterizing sets that contain the minimizers --- is often instrumental in the design and analysis of optimization algorithms, such as the ellipsoid method or geometric descent~\cite{bubeck2015geometric, drusvyatskiy2018optimal, ma2017underestimate}, a variant of Nesterov's accelerated method. Accordingly, one might expect convergence analyses to be more transparent or more favorable in the h-convex setting.
				This is indeed the case; see Section~\ref{sec:h-opt}.
			\end{remark}
			
			\subsection{Outer characterization of horospherically convex functions}
			\label{sec:outer-h-conv}

			In classical convex analysis, there are two standard ways to characterize convex functions on $\mathbb{R}^n$. 
			The most familiar is the \emph{inner characterization}, which defines a function $f: \mathbb{R}^n \to \mathbb{R}$ as convex if, for all $x, y \in \mathbb{R}^n$ and $\lambda \in [0,1]$, it satisfies the inequality $f(\lambda x + (1 - \lambda) y) \leq \lambda f(x) + (1 - \lambda) f(y).$
			Alternatively, there exists an \emph{outer characterization}, often called the \emph{envelope representation}, which we state in the following proposition.
			\begin{proposition}[{\cite[Theorem~8.13]{rockafellar2009variational}}]
				\label{prop:E-outer}
				A function on $\mathbb{R}^n$ is convex if and only if it is the pointwise supremum of a non-empty family of affine functions.
			\end{proposition}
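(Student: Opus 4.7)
The plan is to handle the two directions separately. The $(\Leftarrow)$ direction is immediate: every affine function is convex, and the pointwise supremum of convex functions is convex. Concretely, if $f = \sup_{i \in I} a_i$ where each $a_i(x) = \langle v_i, x\rangle + c_i$, then for any $x, y \in \mathbb{R}^n$ and $\lambda \in [0,1]$,
\[
a_i(\lambda x + (1-\lambda) y) = \lambda a_i(x) + (1-\lambda) a_i(y) \leq \lambda f(x) + (1-\lambda) f(y),
\]
and taking the supremum over $i$ on the left-hand side yields convexity of $f$.

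The $(\Rightarrow)$ direction is the substantive one. The strategy is to exhibit, for every $x_0 \in \mathbb{R}^n$, an affine function $a_{x_0} \leq f$ with $a_{x_0}(x_0) = f(x_0)$; the family $\{a_{x_0}\}_{x_0 \in \mathbb{R}^n}$ then certifies $f$ as a pointwise supremum of affine functions. To produce $a_{x_0}$, I would apply the supporting hyperplane theorem to the epigraph $\epi f \subseteq \mathbb{R}^{n+1}$, a convex set, at the boundary point $(x_0, f(x_0))$. This yields a nonzero vector $(v, \beta) \in \mathbb{R}^n \times \mathbb{R}$ with $\langle v, x - x_0\rangle + \beta(t - f(x_0)) \leq 0$ for all $(x, t) \in \epi f$. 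The key step is to rule out the ``vertical'' case $\beta = 0$: since $\epi f$ contains the upward ray $\{(x_0, t) : t \geq f(x_0)\}$, the inequality forces $\beta \leq 0$, and then $\beta = 0$ would give $\langle v, x - x_0\rangle \leq 0$ for all $x \in \dom f = \mathbb{R}^n$, forcing $v = 0$ and contradicting nontriviality. So $\beta < 0$, and rescaling so $\beta = -1$ gives the affine minorant $a_{x_0}(x) = f(x_0) + \langle v, x - x_0\rangle$.

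The main obstacle, hidden in the statement as written, is the standard technical care around the boundary of the effective domain when $f$ is extended real-valued, where a supporting hyperplane can be vertical. The usual Rockafellar--Wets treatment resolves this by assuming $f$ is proper and lower semicontinuous and then extending the affine minorants from interior points of $\dom f$ to all of $\mathbb{R}^n$ via the closure operation. For the statement as stated here, however, with $f : \mathbb{R}^n \to \mathbb{R}$ finite-valued, this issue does not arise: every $x_0$ is an interior point of $\dom f$, the epigraph argument above applies at every $x_0$, and the resulting supremum $\sup_{x_0} a_{x_0}$ evaluates to $f(x_0)$ at each $x_0$ (attained by the term $a_{x_0}$ itself) and is bounded above by $f$ everywhere (by construction of each $a_{x_0}$), giving the envelope representation.
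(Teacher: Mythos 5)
Your proof is correct and follows essentially the same route as the paper's: the paper's sketch constructs the affine supports $y \mapsto f(x_0) + \langle g_{x_0}, y - x_0\rangle$ from subgradients and takes their pointwise supremum, which is exactly what you do. The only difference is that you additionally unpack the existence of a subgradient at each point via the supporting hyperplane theorem applied to the epigraph (correctly ruling out the vertical case), a step the paper simply takes as known.
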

			\begin{proof}[Proof sketch]
				($\Longrightarrow$) Suppose that $f: \mathbb{R}^n \to \mathbb{R}$ is a convex function. Then, for each $x\in\mathbb{R}^n$, there is a vector (namely, a subgradient of $f$ at $x$) $g_x\in \mathbb{R}^n$ such that $f(y)\geq f(x)+\langle g_x,y-x\rangle$ for all $y\in\mathbb{R}^n$. Therefore, $f$ can be written as the pointwise supremum of its affine supports $L_{x, g_{x}, f(x)}: y \mapsto f(x) + \langle g_{x}, y-x \rangle$, where the supremum is taken over all $x \in \mathbb{R}^n$.
				
				($\Longleftarrow$) This follows from the facts that affine functions are convex and that the supremum of convex functions is convex.
			\end{proof}
			We now examine how the outer characterization can be extended to the manifold setting. 
			The affine supports $L_{x,g_x,f(x)}$ of $f$, which appeared in the proof of Proposition~\ref{prop:E-outer}, are generalized to the g-convex setting as $\tilde{L}_{x,g_{x},f(x)}: y \mapsto f(x) + \langle g_{x}, \exp_{x}^{-1}(y) \rangle$.\footnote{The function $\tilde{L}_{x,g_{x},f(x)}$ is the tightest obtainable lower bound, when one knows the value of $f$ at $x$ and that $g_x$ is a (geodesic) subgradient of $f$ at $x$ \cite[Prop.~35]{criscitiello2023curvature}.}  
			Using these functions in place of affine functions, one can establish the ($\Longrightarrow$) direction of Proposition~\ref{prop:E-outer} in the g-convex setting.
			
			However, a key obstruction arises: the functions $\tilde{L}_{x, g_x, f(x)}$ are generally not g-convex. 
			As a result, the converse implication ($\Longleftarrow$) does not hold, and the outer characterization fails to fully generalize to g-convexity.
			Fortunately, the situation is more favorable in the h-convex setting. 
			As we show in the next theorem (proof in Appendix~\ref{app:thm:envelope_rep_h-convex}), h-convex functions admit a natural and simple outer characterization.
			
			\begin{theorem}[Outer characterization for h-convex functions]
				\label{thm:envelope_rep_h-convex}\ 
				\begin{enumerate}[label=\textup{(\roman*)}]
					\item A function $f:M\to\mathbb{R}$ is h-convex if and only if it is the pointwise supremum of a non-empty family of scaled Busemann functions.
					\item A function $f:M\to\mathbb{R}$ is $\mu$-strongly h-convex if and only if it is the pointwise supremum of a non-empty family of functions in the form $x\mapsto \frac{\mu}{2}d(x,p)^{2}+r$.
				\end{enumerate}
			\end{theorem}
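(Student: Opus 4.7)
The plan is to mimic the classical Euclidean proof of Proposition~\ref{prop:E-outer}, leveraging the fact (already recorded in Proposition~\ref{prop:basic}(vi)) that the pointwise supremum of $\mu$-strongly h-convex functions is $\mu$-strongly h-convex. The only genuinely new inputs needed are: (a) for each base point, the ``tight'' lower bound built into Definition~\ref{def:h-convex_funtion}; and (b) the fact that the elementary building blocks --- scaled Busemann functions in case (i), and functions $x\mapsto \frac{\mu}{2}d(x,p)^{2}+r$ in case (ii) --- are themselves (strongly) h-convex.

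For the forward direction ($\Longrightarrow$) of (i), I would fix any h-subgradient $v_y\in\ph f(y)$ at each $y\in M$, so $f(x)\geq f(y)+B_{y,v_y}(x)$ for all $x$. Each $g_y(x):=f(y)+B_{y,v_y}(x)$ is again a scaled Busemann function, since shifts by constants preserve the scaled-Busemann class (using the property, recalled in Section~\ref{sec:pre3}, that two Busemann functions asymptotic to the same ideal point differ by a constant). Equality $g_y(y)=f(y)$ holds because $B_{y,v_y}(y)=0$, so $f=\sup_{y\in M} g_y$. For (ii), the argument is identical with $g_y(x):=f(y)+Q_{y,v_y}^{\mu}(x)=\frac{\mu}{2}d(x,p_y)^{2}+r_y$, where $p_y=\exp_y(-\tfrac{1}{\mu}v_y)$ and $r_y=f(y)-\tfrac{1}{2\mu}\|v_y\|^{2}$; equality at $y$ follows from $Q_{y,v_y}^{\mu}(y)=0$.

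For the backward direction ($\Longleftarrow$), by Proposition~\ref{prop:basic}(vi) it suffices to show each building block is (strongly) h-convex. For (i), I would check that a scaled Busemann function $B_{p,v}$ is h-convex by taking, at each base point $y$, the tangent vector $w:=\nabla B_{p,v}(y)$ (with $\|w\|=\|v\|$); the asymptotic ray from $y$ through the same ideal point gives $B_{y,w}(x)=B_{p,v}(x)-B_{p,v}(y)$ as an \emph{exact} identity, so $w\in\ph B_{p,v}(y)$. For (ii), I would show $h(x)=\frac{\mu}{2}d(x,p)^{2}+r$ is $\mu$-strongly h-convex by picking $w:=-\mu\exp_y^{-1}(p)$ at each $y$, so that $\exp_y(-\tfrac{1}{\mu}w)=p$ and $\|w\|^{2}=\mu^{2}d(y,p)^{2}$; plugging into the definition of $Q_{y,w}^{\mu}$ yields $Q_{y,w}^{\mu}(x)=h(x)-h(y)$ exactly.

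The main work is identifying the correct ``canonical'' h-subgradient for each building block and verifying the matching identity; once done, the forward directions are almost tautological and the backward directions reduce to one application of Proposition~\ref{prop:basic}(vi). I do not anticipate a genuine obstacle: both the Busemann identity and the squared-distance identity are immediate from the properties of Busemann functions listed in Section~\ref{sec:pre3} together with the gradient formula~\eqref{eq:graddist}.
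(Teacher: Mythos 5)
Your proof is correct, and your forward direction coincides with the paper's (both take the supremum of the supports $f(y)+B_{y,v_y}$, resp.\ $f(y)+Q^{\mu}_{y,v_y}$, over all base points $y$). The backward direction is where you genuinely diverge. The paper proves ($\Longleftarrow$) directly: since the supremum need not be attained at a given $y$, it selects scaled Busemann minorants $B_k$ with $B_k(y)\geq f(y)-1/k$, sets $v_k=\nabla B_k(y)$, shows $\{v_k\}$ is bounded via local Lipschitz continuity, extracts a convergent subsequence, and passes to the limit using the joint continuity of $(x,\omega,y)\mapsto B_{x,\omega}(y)$ on $M\times CM(\infty)\times M$ (Lemma~\ref{lem:B-is-conti}). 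You instead verify that each building block is itself \emph{exactly} (strongly) h-convex --- via the canonical subgradients $w=\nabla B_{p,v}(y)$ for case (i) and $w=-\mu\exp_y^{-1}(p)$ for case (ii), both of which check out against the identities in Section~\ref{sec:pre3} and~\eqref{eq:graddist} --- and then invoke Proposition~\ref{prop:basic}~(vi). This is legitimate and not circular, since Proposition~\ref{prop:basic}~(vi) is proved independently of the theorem; but be aware that its proof in Appendix~\ref{app:prop:basic} is precisely the same boundedness-compactness-continuity argument, so the analytic core is unchanged and has merely been relocated into a lemma you reuse. What your route buys is modularity and the explicit supporting identities for scaled Busemann functions and for $x\mapsto\frac{\mu}{2}d(x,p)^{2}+r$, which the paper only records afterwards (Section~\ref{sec:prob_example}) as consequences of the theorem. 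One cosmetic point you share with the paper: when $v_y=0$ at a global minimizer with $f(y)\neq 0$, the support $g_y$ is a nonzero constant, which lies in the stated family only under the (implicit, Euclidean-style) convention that scaled Busemann functions are taken modulo additive constants.
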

			\begin{remark}
				Theorem~\ref{thm:envelope_rep_h-convex} indicates that h-convexity is an instance of \emph{abstract convexity}, a framework that vastly generalizes the outer characterization of convexity by replacing the family of affine functions with an (almost) arbitrary family of functions~\cite{singer1997abstract}, \cite[\S7]{rubinov2013abstract}.
			\end{remark}

			\subsection{Horospherical $L$-smoothness and Moreau envelopes}
			\label{sec:moreau}
			
			In this subsection, we introduce the notion of \emph{horospherical $L$-smoothness} on Hadamard manifolds, which differs from the existing notion of geodesic $L$-smoothness.
			We show that the Moreau envelope naturally aligns with this smoothness notion and with h-convexity. Furthermore, we establish a connection between horospherical $L$-smoothness and the concept of $c$-concavity from optimal transport; see Remark~\ref{rmk:c-concavity}.
			\begin{definition}
				A differentiable function $f:M \to \mathbb{R}$ is \emph{horospherically $L$-smooth} ($L$-h-smooth) if for all $y \in M$, we have
				\begin{equation}
					\label{eq:defining-smooth}
					f(x)-f(y)\leq Q_{y,\nabla f(y)}^{L}(x):=-\frac{1}{2L}\|\nabla f(y)\|^{2}+\frac{L}{2}d\left(\exp_{y}\left(-\frac{1}{L}\nabla f(y)\right),x\right)^{2} \quad \forall x\in M.
				\end{equation}
			\end{definition}
			\noindent Note that the $L$-h-smoothness condition bounds the \emph{curvature} of $f$ from above, similarly to how the $\mu$-strong h-convexity condition~\eqref{eq:defining_strong-h-convex} bounds the \emph{curvature} of $f$ from below. 
			
			In g-convex optimization, one often assumes geodesic smoothness of the objective function. A differentiable function $f$ is said to be \emph{geodesically $L$-smooth} ($L$-g-smooth) if
			\[
			f(y) \leq f(x) + \langle \nabla f(x), \exp_x^{-1}(y)\rangle + \frac{L}{2} d(x,y)^2 \quad \textup{for all}\ x, y \in M.
			\]
			If $f$ is twice-differentiable, this is equivalent to $\nabla^2 f(x) \preceq L I$ for all $x\in M$.
			The following proposition (proof in Appendix~\ref{app:prop:smooth}) can be seen as a counterpart of Theorem~\ref{thm:envelope_rep_h-convex} and Proposition~\ref{prop:basic}~(iv).
			\begin{proposition}
				\label{prop:smooth}\ 
				\begin{enumerate}[label=\textup{(\roman*)}]
					\item A differentiable function $f:M\to\mathbb{R}$ is $L$-h-smooth if and only if it is the pointwise infimum of a non-empty family of functions in the form $x\mapsto \frac{L}{2}d(x,p)^{2}+r$.
					\item If $f$ is $L$-g-smooth, then it is $L$-h-smooth (see also \cite[Prop.~4.15~(iii)]{leger2023gradient} and Remark~\ref{rmk:c-concavity}).
					
					\item If $f$ is $L$-h-smooth, then it satisfies the \emph{descent condition} $f(y^{+})\leq f(y)-\frac{1}{2L}\|\nabla f(y)\|^{2}$, where $y^+=\exp_y(-\frac{1}{L}\nabla f(y))$.
				\end{enumerate}
			\end{proposition}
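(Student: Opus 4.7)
The plan is to handle items (iii), (ii), (i) in order of increasing difficulty. Item (iii) is immediate: evaluate the $L$-h-smoothness inequality \eqref{eq:defining-smooth} at $x = y^+ = \exp_y(-\tfrac{1}{L}\nabla f(y))$; the squared-distance term in $Q^L_{y,\nabla f(y)}(y^+)$ vanishes, leaving the descent inequality. Item (ii) reduces to the triangle comparison Lemma~\ref{lem:rev_toponogov}: setting $p = \exp_y(-\tfrac{1}{L}\nabla f(y))$ and completing the square in $\exp_y^{-1}(x)$, the $L$-g-smoothness upper bound rewrites as
\[
f(y) + \langle \nabla f(y), \exp_y^{-1}(x)\rangle + \tfrac{L}{2}\, d(x,y)^2 \;=\; f(y) - \tfrac{1}{2L}\|\nabla f(y)\|^2 + \tfrac{L}{2}\bigl\|\exp_y^{-1}(x) - \exp_y^{-1}(p)\bigr\|^2,
\]
and Lemma~\ref{lem:rev_toponogov} bounds the last norm by $d(x,p)^2$, upgrading the right-hand side to $f(y) + Q^L_{y,\nabla f(y)}(x)$, which is exactly $L$-h-smoothness. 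This is the same manipulation used in the proof of Lemma~\ref{lem:horotriangle}.

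For item (i), the ``only if'' direction just restates the definition: for each $y$, the test function $g_y(x) := \tfrac{L}{2}\, d(x, p_y)^2 + f(y) - \tfrac{1}{2L}\|\nabla f(y)\|^2$ with $p_y = \exp_y(-\tfrac{1}{L}\nabla f(y))$ lies above $f$ globally (by $L$-h-smoothness) and equals $f(y)$ at $y$ (by a direct computation, since $\tfrac{L}{2} d(y, p_y)^2 = \tfrac{1}{2L}\|\nabla f(y)\|^2$), so $f = \inf_{y \in M} g_y$. The ``if'' direction is the main obstacle. Write $f = \inf_i g_i$ with $g_i(x) = \tfrac{L}{2}\, d(x, p_i)^2 + r_i$ and fix $y$ where $f$ is differentiable. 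The goal is to exhibit a (possibly limiting) function $g^*(x) = \tfrac{L}{2}\, d(x, p^*)^2 + r^*$ that \emph{touches} $f$ at $y$, i.e., $g^* \geq f$ globally with $g^*(y) = f(y)$. Once such a $g^*$ is in hand, $g^* - f$ is non-negative and vanishes at $y$, so differentiability forces $\nabla g^*(y) = \nabla f(y)$; this pins down $p^* = \exp_y(-\tfrac{1}{L}\nabla f(y))$, and then $f(x) \leq g^*(x) = f(y) + Q^L_{y,\nabla f(y)}(x)$ is a routine identity.

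To produce $g^*$ when the infimum is not attained, take $i_k$ with $g_{i_k}(y) \to f(y)$. The crux is ruling out $d(y, p_{i_k}) \to \infty$: extracting a limiting direction from the unit vectors $\exp_y^{-1}(p_{i_k})/d(y, p_{i_k}) \to -v$ and using the Busemann asymptotics $d(x, p_{i_k}) - d(y, p_{i_k}) \to B_{y,v}(x)$, the bound $f(x) \leq g_{i_k}(x) = g_{i_k}(y) + \tfrac{L}{2}\bigl(d(x, p_{i_k})^2 - d(y, p_{i_k})^2\bigr)$ would, in the limit, force $f(x) = -\infty$ on the open horoball $\{B_{y,v} < 0\}$, contradicting $f$ being real-valued. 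With $d(y, p_{i_k})$ bounded, extract a convergent subsequence $p_{i_k} \to p^*$, set $r^* = f(y) - \tfrac{L}{2}\, d(y, p^*)^2$, and verify that $g^*$ has the required touching property by passing to the limit. This compactness-at-infinity step is the main technical hurdle.
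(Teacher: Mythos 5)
Your proposal is correct. Parts (ii), (iii), and the forward direction of (i) coincide with the paper's own argument: for (iii) the paper also just substitutes $x\leftarrow y^{+}$ into \eqref{eq:defining-smooth}, and for (ii) the paper derives the key inequality $\langle\nabla f(y),\exp_{y}^{-1}(x)\rangle+\frac{L}{2}d(x,y)^{2}\leq Q_{y,\nabla f(y)}^{L}(x)$ from the $L$-strong g-convexity of $Q_{y,\nabla f(y)}^{L}$, which is the same computation as your completing-the-square followed by Lemma~\ref{lem:rev_toponogov}. The one genuine divergence is in the reverse direction of (i). The paper takes majorants $q_{k}$ with $q_{k}(y)\leq f(y)+1/k$, sets $v_{k}=\nabla q_{k}(y)$, asserts that ``using the differentiability of $f$, one can check $\lim_{k\to\infty}v_{k}=\nabla f(y)$,'' and concludes by continuity of $v\mapsto Q_{y,v}^{L}(x)$. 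You instead work with the centers $p_{i_{k}}$: you rule out $d(y,p_{i_{k}})\to\infty$ by a Busemann-asymptotics argument (escape to infinity would force $f=-\infty$ on an open horoball), extract a limiting touching paraboloid $g^{*}$, and only then invoke differentiability through first-order optimality of $g^{*}-f$ at $y$ to identify the vertex $p^{*}=\exp_{y}(-\frac{1}{L}\nabla f(y))$. This makes explicit the compactness step that the paper leaves implicit in its one-line claim (boundedness of $\{v_{k}\}$ is exactly boundedness of $\{d(y,p_{i_k})\}$), at the cost of an extra limiting argument that relies on the continuity of $(x,\omega,y)\mapsto B_{x,\omega}(y)$ recorded in Lemma~\ref{lem:B-is-conti}; both routes are valid, and yours is arguably the more complete write-up.
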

			\noindent In summary: 
			$$\text{Geodesic $L$-smoothness $\Longrightarrow$ Horospherical $L$-smoothness $\Longrightarrow$ Descent condition.}$$
			
			Busemann functions are $0$-h-smooth: this follows from Proposition~\ref{prop:sctoc} and the fact that $Q_{y,v}^{L}(x)$ is increasing in $L$, as shown in the proof of Proposition~\ref{prop:basic}~(i) in Appendix~\ref{app:prop:basic}.
			Squared distance functions are $2$-h-smooth.
			In contrast, if $M$ is a Hadamard manifold with sectional curvatures at least $K$, then the function $x \mapsto d(x, p)^2$ is geodesically $2\frac{r \sqrt{-K}}{\tanh(r \sqrt{-K})}$-smooth in $B(p, r)$ (see, for example, \cite[Lem.~2]{alimisis2020continuous}, \cite[Cor.~16]{roux2025implicit}), and this smoothness parameter is tight.
			Note that $2\frac{r \sqrt{-K}}{\tanh(r \sqrt{-K})}$ is always greater than $2$ and can be arbitrarily large as $r\sqrt{-K}$ increases. This in turn means that gradient descent based on geodesic smoothness takes much smaller step sizes than gradient descent based on horospherical smoothness: see Section~\ref{sec:GD}.
			\begin{remark}[Horospherical smoothness as $c$-concavity]
				\label{rmk:c-concavity}
				Let $X$ and $Y$ be sets, and suppose a cost function $c:X\times Y\to\mathbb{R}\cup\{\infty\}$ is given. A function $f:X\to\mathbb{R}$ is said to be $c$-concave if there is a function $h:Y\to\mathbb{R}\cup\{\infty\}$ such that $f(x)=\inf_{y\in Y}\{c(x,y)+h(y)\}$ for all $x\in X$.
				The notion of $c$-concavity arises in optimal transport~\cite[Ch.~5]{villani2008optimal}, where $X$ and $Y$ are typically probability spaces, and $c(x, y)$ denotes the cost of transporting one unit from $x \in X$ to $y \in Y$.
				
				Consider the particular setting where $X=Y=M$ and $c(x,y)=\frac{L}{2}d(x,y)^2$. If a differentiable function $f:M\to\mathbb{R}$ is $c$-concave, then $f$ is $L$-h-smooth by Proposition~\ref{prop:smooth}~(i). Conversely, suppose that $f:M\to\mathbb{R}$ is a differentiable and $L$-h-smooth function. By using Proposition~\ref{prop:smooth}~(i) again, we can write $f(x)=\inf_{(p,r)\in I}\{\frac{L}{2}d(x,p)^{2}+r\}$ where $I$ is some subset of $M\times\mathbb{R}$. For each $p\in M$, we define $r_{p}:=\inf\{r\in\mathbb{R}:(p,r)\in I\}$. Then, we have $f(x)=\inf_{p\in M}\{\frac{L}{2}d(x,p)^2+r_{p}\}$, and thus $f$ is $c$-concave. Therefore, $c$-concavity in this setting is equivalent to $L$-h-smoothness. 
				
				Recently, \cite{leger2023gradient} analyzed gradient descent under the assumption of $c$-concavity in various settings. In particular, Section~4.4 of their paper analyzes the setting where $c$-concavity is equivalent to $L$-h-smoothness. We note that our Proposition~\ref{prop:smooth}~(ii) is equivalent to their Proposition~4.15~(iii).

			\end{remark}

			\paragraph{Moreau envelope.}
			For a function $f:M\to\mathbb{R}$, its \emph{Moreau envelope} (or the \emph{Moreau-Yosida regularization}) $f_{\lambda}:M\to\mathbb{R}$ with parameter $\lambda>0$, and the corresponding \emph{proximal operator} $\prox_{\lambda f}:M\to M$ are defined as
			\begin{equation}
				\label{eq:moreau}
				\begin{aligned}
					f_{\lambda}(x) & =\inf_{y\in M}\left\{ f(y)+\frac{1}{2\lambda}d(x,y)^{2}\right\}, \\
					\prox_{\lambda f}(x) & =\underset{y\in M}{\arg\min}\left\{ f(y)+\frac{1}{2\lambda}d(x,y)^{2}\right\} .
				\end{aligned}
			\end{equation}
			The Moreau envelope was originally introduced for functions on Euclidean spaces in \cite{moreau1965proximite}. It regularizes a $C^0$-function $f$ into its $C^1$-approximation $f_{\lambda}$, where the degree of smoothness is determined by the parameter $\lambda$. The Moreau envelopes of functions on Hadamard manifolds $M$ are studied in \cite{azagra2006inf}. Further properties of $f_{\lambda}$, closely related to optimization theory, can be found in \cite[Appendix~G]{criscitiello2023curvature}, \cite[Appendices~C,E]{martinez2024convergence}. We review some of the basic properties of $f_{\lambda}$.
			\begin{proposition}[{\cite[Cor.~4.5]{azagra2006inf}, \cite[Lem.~10]{martinez2024convergence}}]
				\label{prop:moreau-basic}\ 
				\begin{enumerate}[label=\textup{(\roman*)}]
					\item If $f$ is g-convex, then $f_{\lambda}$ is also g-convex.\footnote{This is not true if $M$ is a manifold with positive curvature somewhere \cite[\S C.1]{martinez2024convergence}.}
					\item $f_{\lambda}$ has the same infimum and the same set of minimizers as $f$.
					\item The gradient of $f_{\lambda}$ is given by $\nabla f_{\lambda}(x)=-\frac{1}{\lambda}\exp_{x}^{-1}\left(\prox_{\lambda f}(x)\right)$.
				\end{enumerate}
			\end{proposition}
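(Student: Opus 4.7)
My plan is to prove the three items in the order (ii), (iii), (i), since the difficulty and the tools needed increase in that order.

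\textbf{Proof of (ii).}  First I would sandwich $f_\lambda$ between $\inf f$ and $f$.  Taking $y = x$ in the infimum gives $f_\lambda(x) \leq f(x)$, while $f_\lambda(x) \geq \inf_{y} f(y) = \inf f$ is immediate from the non-negativity of $\frac{1}{2\lambda} d(x,y)^2$.  This already yields $\inf f_\lambda = \inf f$.  For the sets of minimizers, if $x^* \in \argmin f$ then $f_\lambda(x^*) \leq f(x^*) = \inf f = \inf f_\lambda$, so $x^* \in \argmin f_\lambda$.  Conversely, if $x^* \in \argmin f_\lambda$ and $y^* \in \prox_{\lambda f}(x^*)$, then combining $f_\lambda(x^*) = f(y^*) + \frac{1}{2\lambda} d(x^*, y^*)^2$ with $f_\lambda(x^*) = \inf f \leq f(y^*)$ forces $d(x^*, y^*) = 0$, whence $x^* = y^*$ minimizes $f$.

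\textbf{Proof of (iii).}  This is a Danskin-type argument.  Fix $x_0$ and let $y_0 = \prox_{\lambda f}(x_0)$ (unique because $y \mapsto f(y) + \frac{1}{2\lambda}d(x_0,y)^2$ is strongly g-convex once $f$ is g-convex, by Proposition~\ref{prop:basic}~(iv) applied to the standing assumption).  For all $x$ in a neighborhood of $x_0$, plugging the feasible choice $y_0$ into the infimum gives
\begin{equation*}
f_\lambda(x) \;\leq\; f(y_0) + \tfrac{1}{2\lambda} d(x, y_0)^2,
\end{equation*}
with equality at $x = x_0$.  Hence the smooth upper bound $x \mapsto f(y_0) + \frac{1}{2\lambda} d(x, y_0)^2$ touches $f_\lambda$ at $x_0$ from above, and $f_\lambda$ inherits a gradient at $x_0$ equal to the gradient of this envelope.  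Applying the formula $\nabla_x \frac{1}{2} d(x, y_0)^2 = -\exp_x^{-1}(y_0)$ from~\eqref{eq:graddist} gives $\nabla f_\lambda(x_0) = -\frac{1}{\lambda} \exp_{x_0}^{-1}(y_0)$.

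\textbf{Proof of (i).}  This is where the Hadamard (CAT$(0)$) geometry does the real work, and I expect it to be the main obstacle.  Let $\gamma \colon [0,1] \to M$ be a geodesic with endpoints $x_0, x_1$, and let $y_i \in \prox_{\lambda f}(x_i)$.  Let $\eta \colon [0,1] \to M$ be the geodesic from $y_0$ to $y_1$.  Using $\eta(t)$ as a feasible point in the infimum defining $f_\lambda(\gamma(t))$,
\begin{equation*}
f_\lambda(\gamma(t)) \;\leq\; f(\eta(t)) + \tfrac{1}{2\lambda} d(\gamma(t), \eta(t))^2.
\end{equation*}
G-convexity of $f$ bounds $f(\eta(t)) \leq (1-t) f(y_0) + t f(y_1)$.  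The remaining piece is the key CAT$(0)$ fact that on a Hadamard manifold, the function $t \mapsto d(\gamma(t), \eta(t))^2$ is convex along a pair of geodesics, hence
\begin{equation*}
d(\gamma(t),\eta(t))^2 \;\leq\; (1-t)\, d(x_0, y_0)^2 + t\, d(x_1, y_1)^2.
\end{equation*}
Adding these two bounds and recognizing $f(y_i) + \frac{1}{2\lambda} d(x_i, y_i)^2 = f_\lambda(x_i)$ yields $f_\lambda(\gamma(t)) \leq (1-t) f_\lambda(x_0) + t f_\lambda(x_1)$.  The main obstacle is precisely this CAT$(0)$ inequality for pairs of geodesics; this is where non-positive curvature is essential and the argument would fail in positive curvature (consistent with the footnote).
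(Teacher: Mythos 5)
The paper does not prove Proposition~\ref{prop:moreau-basic} at all --- it is quoted from \cite[Cor.~4.5]{azagra2006inf} and \cite[Lem.~10]{martinez2024convergence} --- so there is nothing internal to compare against; your argument is essentially the standard one from those references, and parts (i) and (ii) are correct as written. (For (i), the key CAT$(0)$ fact you invoke --- convexity of $t \mapsto d(\gamma(t),\eta(t))$ for a pair of geodesics, hence of its square by monotone convex composition --- is exactly what makes the Euclidean proof go through, and your converse direction in (ii) correctly extracts $x^* = \prox_{\lambda f}(x^*)$.)

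The one genuine gap is the order of (iii) relative to (i). A smooth function touching $f_\lambda$ from above at $x_0$ does \emph{not} by itself force $f_\lambda$ to be differentiable there --- think of $-|x|$ at the origin, which is majorized by $0$ with equality at $0$ but has no gradient. To conclude that $f_\lambda$ ``inherits'' the gradient of the envelope you need a matching lower regularity statement, the cheapest being that $f_\lambda$ is g-convex and hence has a nonempty subdifferential at $x_0$: any $v \in \partial f_\lambda(x_0)$ is then also a subgradient of the smooth majorant at $x_0$, forcing $v = -\frac{1}{\lambda}\exp_{x_0}^{-1}(y_0)$ and singleton-valuedness of $\partial f_\lambda(x_0)$. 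Since you prove (i) without using (iii), this is repaired simply by reordering to (ii), (i), (iii), but as written the Danskin step is incomplete. Two smaller points: the uniqueness of $\prox_{\lambda f}(x_0)$ follows from the $\frac{1}{\lambda}$-strong g-convexity of $y \mapsto \frac{1}{2\lambda}d(x_0,y)^2$ (stated in Section~\ref{sec:pre2}) added to the g-convexity of $f$, not from Proposition~\ref{prop:basic}~(iv), which concerns h-convexity; and in the converse direction of (ii), if one does not want to presuppose attainment of the prox, a minimizing sequence $y_k$ with $d(x^*,y_k) \to 0$ together with continuity of $f$ gives the same conclusion.
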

			One of the key properties of Moreau envelopes of functions on $\mathbb{R}^n$ is that $f_{\lambda}$ is always $(1/\lambda)$-smooth. However, for general Hadamard manifolds, the (geodesic) smoothness parameter is not given by $1/\lambda$, and worsens as the curvature of the manifold becomes more negative 
			\cite[Lem.~10]{criscitiello2023curvature}, \cite[Thm.~11]{martinez2024convergence}.
			This discrepancy arises from the fact that geodesic smoothness is too rigid a condition. 
			Instead, a more appropriate notion for measuring smoothness in this context is h-smoothness. 
			The following theorem shows that the operation $f \mapsto f_{\lambda}$ preserves h-convexity (as well as g-convexity), and that its regularizing effect is naturally measured by h-smoothness.
			\begin{theorem}
				\label{thm:moreau}
				Let $f \colon M \to \mathbb{R}$ be a h-convex function. Then, the Moreau envelope $f_{\lambda}$ defined in \eqref{eq:moreau} is h-convex and $(1/\lambda)$-h-smooth.
			\end{theorem}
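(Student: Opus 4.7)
The plan is to prove the two conclusions separately. The $(1/\lambda)$-h-smoothness is immediate from the definition: writing $f_\lambda(x) = \inf_{y \in M}\{f(y) + \tfrac{1}{2\lambda}d(x, y)^2\}$ exhibits $f_\lambda$ as the pointwise infimum of the family $\{x \mapsto \tfrac{1}{2\lambda}d(x, y)^2 + f(y) : y \in M\}$, whose members are exactly of the form $\tfrac{L}{2}d(x, p)^2 + r$ with $L = 1/\lambda$. Since $f_\lambda$ is differentiable (Proposition~\ref{prop:moreau-basic}(iii)), Proposition~\ref{prop:smooth}(i) immediately delivers $(1/\lambda)$-h-smoothness.

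For h-convexity, I would verify the h-subgradient inequality directly at every point $x \in M$. Let $y^* := \prox_{\lambda f}(x)$ and set $v^* := \nabla f_\lambda(x) = -\tfrac{1}{\lambda}\exp_x^{-1}(y^*)$ via Proposition~\ref{prop:moreau-basic}(iii). A preliminary subcomputation gives the identity $(B_{p, v})_\lambda = B_{p, v} - \tfrac{\lambda}{2}\|v\|^2$, which follows from the $\|v\|$-Lipschitz continuity of Busemann functions combined with Young's inequality (the infimum being attained along the geodesic from $x$ asymptotic to the same boundary point as $B_{p, v}$). Invoking h-convexity of $f$, for any $u \in \ph f(y^*)$ one has $f(w) \geq f(y^*) + B_{y^*, u}(w)$; plugging into the definition of $f_\lambda$ and applying the subcomputation yields
\[
f_\lambda(z) \geq f(y^*) + B_{y^*, u}(z) - \tfrac{\lambda}{2}\|u\|^2.
\]
Choosing $u = u^* := \tfrac{1}{\lambda}\exp_{y^*}^{-1}(x)$ achieves equality in Young's inequality $B_{y^*, u}(x) \leq \tfrac{\lambda}{2}\|u\|^2 + \tfrac{1}{2\lambda}d(x, y^*)^2$, and the two scaled Busemann functions $B_{y^*, u^*}$ and $B_{x, v^*}$ (asymptotic to the same boundary point, since the geodesic from $x$ through $y^*$ extended past $y^*$ is the same as the ray from $y^*$ in the direction $-u^*/\|u^*\|$) differ by the constant $B_{y^*, u^*}(x) = \tfrac{d(x, y^*)^2}{\lambda}$. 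Substituting $B_{y^*, u^*}(z) = B_{x, v^*}(z) + \tfrac{d(x, y^*)^2}{\lambda}$ and subtracting $f_\lambda(x) = f(y^*) + \tfrac{1}{2\lambda}d(x, y^*)^2$ telescopes the constants and yields the desired h-subgradient inequality $f_\lambda(z) \geq f_\lambda(x) + B_{x, v^*}(z)$.

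The hardest step will be verifying that $u^*$ really lies in $\ph f(y^*)$. The vector $u^*$ is automatically a geodesic subgradient of $f$ at $y^*$ by first-order optimality for the Moreau envelope; when $f$ is differentiable, Proposition~\ref{prop:basic}(v) immediately gives $u^* = \nabla f(y^*) \in \ph f(y^*)$. For general (possibly nonsmooth) h-convex $f$ the membership is more delicate, and the cleanest route is to establish the identity $\ph f = \partial f$ on h-convex functions, which reduces in turn to a sublinearity property of the map $u \mapsto B_{y, u}(w)$ on $T_y M$ for each fixed $w$ (a general property of Busemann functions on Hadamard manifolds, compatible with the envelope representation of Theorem~\ref{thm:envelope_rep_h-convex}).
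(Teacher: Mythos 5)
Your argument for $(1/\lambda)$-h-smoothness is exactly the paper's (exhibit $f_\lambda$ as an infimum of functions $x\mapsto\tfrac{1}{2\lambda}d(x,y)^2+f(y)$ and invoke Proposition~\ref{prop:smooth}(i)). For h-convexity you take a genuinely different route: the paper's proof hinges on the geometric Lemma~\ref{lem:moreau} and manipulates \emph{both} proximal points $x'=\prox_{\lambda f}(x)$ and $y'=\prox_{\lambda f}(y)$, whereas you compute the Moreau envelope of a scaled Busemann function in closed form, $(B_{p,v})_\lambda=B_{p,v}-\tfrac{\lambda}{2}\|v\|^2$, and push the h-subgradient inequality at the single prox point $y^*=\prox_{\lambda f}(x)$ through the infimum defining $f_\lambda(z)$. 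Your envelope identity is correct (lower bound from $\|v\|$-Lipschitzness, upper bound attained along the gradient-flow geodesic of $B_{p,v}$), the constant $B_{y^*,u^*}(x)=\tfrac{1}{\lambda}d(x,y^*)^2$ is right, and the telescoping does yield $f_\lambda(z)\geq f_\lambda(x)+B_{x,\nabla f_\lambda(x)}(z)$. This is arguably cleaner than the paper's argument, and it makes transparent why only $u=u^*$ can close the bound (equality in Young's inequality).

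The gap sits exactly where you flag it, and your proposed repair does not work. First-order optimality of the prox subproblem only places $u^*=\tfrac{1}{\lambda}\exp_{y^*}^{-1}(x)$ in the \emph{geodesic} subdifferential $\partial f(y^*)$; to run your argument you need $u^*\in\ph f(y^*)$. The identity $\ph f=\partial f$ that you propose to establish via ``sublinearity of $u\mapsto B_{y,u}(w)$'' is contradicted by the paper's own counterexample: Appendix~\ref{app:rmk:basic} exhibits orthogonal unit vectors $u_1,u_2\in T_p\mathbb{H}^2$ and a point $x$ with $B_{p,u_1}(x)+B_{p,u_2}(x)<B_{p,u_1+u_2}(x)$, so the map $u\mapsto B_{p,u}(x)$ is \emph{not} sublinear on Hadamard manifolds --- indeed this failure is precisely why h-convexity is not preserved under addition and why $\ph f(y)\subsetneq\partial f(y)$ can occur. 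Hence the ``general property of Busemann functions'' you invoke is false, and the nonsmooth case is not covered by your proposal. To be fair, the paper's own proof writes $\nabla f(x')=\tfrac{1}{\lambda}\exp_{x'}^{-1}(x)$ and applies the h-subgradient inequality with that vector, which Proposition~\ref{prop:basic}(v) justifies only when $f$ is differentiable at the prox point; so, after deleting the false sublinearity claim, your proof is at the same level of rigor as the paper's, but the membership $u^*\in\ph f(y^*)$ for general nonsmooth h-convex $f$ still needs a direct justification rather than the route you sketch.
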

			The proof of Theorem~\ref{thm:moreau} can be found in Appendix~\ref{app:thm:moreau}.
			That proof relies on the following geometric lemma, which may be of independent interest.
			\begin{lemma}
				\label{lem:moreau}
				Let $x,x',y,y'\in M$. Then, we have
				\[
				B_{x',\exp_{x'}^{-1}(x)}(y')+\frac{1}{2}d(y,y')^{2}+\frac{1}{2}d(x,x')^{2}\geq B_{x',\exp_{x'}^{-1}(x)}(y).
				\]
			\end{lemma}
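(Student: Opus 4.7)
The plan is to reduce the inequality to a Lipschitz estimate on scaled Busemann functions followed by a single application of the AM--GM inequality. To this end, set $v = \exp_{x'}^{-1}(x) \in T_{x'}M$, so that $\|v\| = d(x, x')$. The desired inequality can then be rewritten as
\[
B_{x',v}(y) - B_{x',v}(y') \leq \tfrac{1}{2}\|v\|^{2} + \tfrac{1}{2}d(y,y')^{2}.
\]

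The first key ingredient is that $B_{x',v}$ is $\|v\|$-Lipschitz. This follows from the properties listed in Section~\ref{sec:pre3}: unit-scale Busemann functions are $1$-Lipschitz, and by definition $B_{x',v} = \|v\| \cdot B_{x', v/\|v\|}$ when $v \neq 0$ (and the case $v = 0$ is trivial since both sides vanish). Hence
\[
B_{x',v}(y) - B_{x',v}(y') \leq \|v\| \cdot d(y, y').
\]

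The second key ingredient is Young's inequality (AM--GM): $ab \leq \tfrac12 a^2 + \tfrac12 b^2$ applied to $a = \|v\|$ and $b = d(y,y')$, which yields
\[
\|v\| \cdot d(y, y') \leq \tfrac{1}{2}\|v\|^{2} + \tfrac{1}{2}d(y,y')^{2} = \tfrac{1}{2}d(x, x')^{2} + \tfrac{1}{2}d(y,y')^{2}.
\]
Combining the two displays gives the lemma. There is essentially no ``hard part'' here: the only content is the observation that the scaling of the Busemann function $B_{x',\exp_{x'}^{-1}(x)}$ is precisely $d(x,x')$, so that the Lipschitz bound pairs cleanly with the quadratic term $\tfrac12 d(x,x')^2$ via Young's inequality.
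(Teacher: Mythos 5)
Your proof is correct. The two ingredients you use are both legitimately available: the paper's preliminaries state that unit-scale Busemann functions are $1$-Lipschitz and that $B_{p,rv} = rB_{p,v}$, so $B_{x',v}$ is indeed $\|v\|$-Lipschitz (the paper itself invokes exactly this fact in the proof of Lemma~\ref{lem:B-is-conti}), and the degenerate case $v=0$ is trivial as you note.

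Your route differs from the paper's in packaging, though the underlying mathematics is the same. The paper works with the pre-limit approximants $a\,(d(\cdot,z_s)-s)$ of the scaled Busemann function, establishes the inequality for each finite $s$ via an explicit factorization $-\tfrac12 A^2+\tfrac12 B^2=\tfrac12(B-A)(B+A)\ge 0$ with $A=d(y,z_s)-d(y',z_s)$ and $B=d(y,y')$ (which is the reverse triangle inequality plus completing the square in $a$), and then lets $s\to\infty$. You instead take the limit ``up front'' by citing the already-established $\|v\|$-Lipschitz property of the Busemann function --- which encodes that same reverse triangle inequality --- and then close with Young's inequality in place of the minimization over $a$. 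The net effect is the same chain $B_{x',v}(y)-B_{x',v}(y')\le \|v\|\,d(y,y')\le\tfrac12\|v\|^2+\tfrac12 d(y,y')^2$, but your version is shorter and avoids re-deriving the limit, at the cost of leaning on the stated (but not re-proved) Lipschitz property of scaled Busemann functions.
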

			
			\subsection{Connection with horospherically convex geometry}
			\label{sec:h-geometry}
			
			In this section, we will see that the notion of h-convexity for functions, introduced in our paper, has a close connection with the existing notion of h-convexity for sets. It is well-known that a function $f: \mathbb{R}^n \to \mathbb{R}$ is convex if and only if its epigraph $\{(x, r) \in \mathbb{R}^n \times \mathbb{R} : f(x) \leq r \}$ is a closed convex set, that is, a set which can be written as the intersection of some closed half-spaces \cite[Thm.~11.5]{rockafellar1997convex}. The following theorem generalizes this fact to the h-convex setting.
			We give $M \times \reals$ the standard product metric, making it a Hadamard manifold as well.
			\begin{theorem}
				\label{thm:set_function}
				For a function $f \colon M\to\mathbb{R}$, the following statements are equivalent:
				\begin{enumerate}[label=\textup{(\roman*)}]
					\item The function $f$ is h-convex on $M$.
					\item The epigraph of $f$ is an intersection of closed horoballs in $M\times\mathbb{R}$.
				\end{enumerate}
			\end{theorem}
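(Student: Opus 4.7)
}

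The plan is to reduce the statement to the outer characterization of h-convex functions (Theorem~\ref{thm:envelope_rep_h-convex}(i)) by first understanding the structure of scaled Busemann functions and horoballs on the product manifold $M \times \mathbb{R}$. My starting point is the product formula for Busemann functions (Proposition~\ref{prop:prod-B-functions}), together with the elementary fact that every scaled Busemann function on $\mathbb{R}$ is an affine function of the form $r \mapsto w(r-s)$. Combining these, every scaled Busemann function on $M \times \mathbb{R}$ can be written as
\[
B_{(p,s),(v,w)}(x,r) \;=\; B_{p,v}(x) + w(r - s),
\]
for some $p \in M$, $s \in \mathbb{R}$, $v \in T_p M$ and $w \in \mathbb{R}$. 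Consequently, every closed horoball in $M \times \mathbb{R}$ is a set of the form $\{(x,r) : B_{p,v}(x) + w(r-s) \leq c\}$.

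For the direction $(i)\Rightarrow(ii)$, I would apply Theorem~\ref{thm:envelope_rep_h-convex}(i) to write $f$ as a pointwise supremum of scaled Busemann functions with additive constants, $f(x) = \sup_{\alpha} \bigl(B_{p_\alpha,v_\alpha}(x) + c_\alpha\bigr)$. Then
\[
\epi f \;=\; \bigcap_\alpha \bigl\{(x,r) : B_{p_\alpha,v_\alpha}(x) + c_\alpha \leq r\bigr\},
\]
and each set in the intersection is a horoball in $M \times \mathbb{R}$ (take $w=-1$ and choose $s,c$ accordingly in the formula above). So $\epi f$ is exhibited as an intersection of closed horoballs.

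For the converse $(ii)\Rightarrow(i)$, suppose $\epi f = \bigcap_\alpha H_\alpha$ with each $H_\alpha$ a closed horoball. Using the horoball description above, each $H_\alpha$ is given by a tuple $(p_\alpha,s_\alpha,v_\alpha,w_\alpha,c_\alpha)$. Since $f : M \to \mathbb{R}$ is real-valued, the projection of $\epi f$ to $M$ is all of $M$, and $\epi f$ is upward-closed in the $\mathbb{R}$-coordinate. A case analysis on the sign of $w_\alpha$ pins down the admissible horoballs: $w_\alpha > 0$ would force $\epi f$ to be bounded above in $r$ over some fiber, contradicting upward-closedness; $w_\alpha = 0$ makes $H_\alpha$ a cylinder $\{B_{p_\alpha,v_\alpha}(x) \leq c_\alpha\} \times \mathbb{R}$, and since any nontrivial scaled Busemann function is unbounded, surjectivity of the projection forces $H_\alpha = M \times \mathbb{R}$ (a trivial constraint that can be discarded). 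Hence, we may assume every $H_\alpha$ has $w_\alpha < 0$. Dividing by $|w_\alpha|$ (which again yields a scaled Busemann function, since the cone $CM(\infty)$ is closed under positive scaling), each such $H_\alpha$ takes the form $\{(x,r) : r \geq g_\alpha(x)\}$ where $g_\alpha$ is a scaled Busemann function plus a constant. Intersecting gives $\epi f = \{(x,r) : r \geq \sup_\alpha g_\alpha(x)\}$, whence $f = \sup_\alpha g_\alpha$, and Theorem~\ref{thm:envelope_rep_h-convex}(i) concludes that $f$ is h-convex.

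The main potential obstacle is the first step, namely the explicit form of scaled Busemann functions on $M \times \mathbb{R}$; however, this is essentially a direct consequence of Proposition~\ref{prop:prod-B-functions} and a one-line computation on $\mathbb{R}$. The rest is organizational: a careful sign analysis on $w_\alpha$ in the converse direction, and the observation that trivial horoballs can be dropped without changing the intersection.
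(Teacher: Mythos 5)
Your proof is correct and takes essentially the same route as the paper's: the paper likewise combines the outer characterization (Theorem~\ref{thm:envelope_rep_h-convex}) with Proposition~\ref{prop:prod-B-functions} to classify closed horoballs in $M\times\mathbb{R}$ into epigraphs of non-negatively scaled Busemann functions, cylinders $H\times\mathbb{R}$, and hypographs (its Lemma~\ref{lem:Buse-horo}, which is exactly your sign analysis on $w$), and then argues that only the epigraph type can contain $\epi f$. The only cosmetic slip is that in the $w_\alpha=0$ case the correct conclusion is that such a horoball simply cannot contain $\epi f$ (since $M\times\mathbb{R}$ is not itself a horoball), rather than that it is a discardable trivial constraint; this does not affect the argument.
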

			The proof of Theorem~\ref{thm:set_function} can be found in Appendix~\ref{app:thm:set_function}. 
			This result reveals that the notion of h-convex functions 
			is rooted in the existing notion of \emph{horospherically convex} sets \cite{santalo1967horocycles, santalo1968horospheres}, whose (equivalent) definition is provided below.
			\begin{definition}
				\label{def:h-convex_set}
				A closed set $K\subseteq M$ is \emph{horospherically convex} (h-convex) if it is an intersection of closed horoballs. 
			\end{definition}
			We can now state Theorem~\ref{thm:set_function} as follows:
			\begin{center}
				$f$ is an h-convex function on $M$ $\iff$ The epigraph of $f$ is an h-convex set in $M \times \mathbb{R}$.
			\end{center}
			This connection provides a way to study h-convex functions in the language of h-convex sets. For example, Proposition~\ref{prop:basic}~(vi), which states that the supremum of h-convex functions is h-convex, translates into h-convex geometry as follows: the intersection of h-convex sets is h-convex.

			We note that Definition~\ref{def:h-convex_set} is not the only definition of h-convexity for sets. To our knowledge, there are three equivalent definitions of h-convexity for closed sets, as shown below.
			\begin{itemize}
				\item A closed set $K\subseteq M$ is h-convex if it is the intersection of closed horoballs. We follow this definition in our paper. This is a straightforward generalization of the well-known outer characterization of closed convex sets in $\mathbb{R}^n$ \cite[Thm.~11.5]{rockafellar1997convex}.
				
				\item A closed set $K\subseteq M$ is h-convex if, at each boundary point $x \in \bd(K)$, there exists a \emph{supporting horosphere} of $K$. It is common to assume that the interior of $K$ is non-empty when using this definition. With this assumption, this definition is equivalent to the first definition (see Appendix~\ref{app:first-order-as-supporting}). Without such an assumption, it is a weaker condition: a set of two distinct points is h-convex in the second sense but not in the first (see \cite[\S3]{goodwin2024subgradient}).

				\item A closed subset $K$ of the hyperbolic space $\mathbb{H}^n$ is h-convex if, for any $x, y \in K$, every \emph{horocycle segment} joining them is contained in $K$. For this reason, h-convex sets are also referred to as \emph{horocyclically convex} sets. It is known that this definition is equivalent to the previous two definitions (see, for example, \cite[\S4]{gallego2010linear}). 
			\end{itemize}
			It is remarkable that the third definition is stated in an \emph{inner} sense rather than an \emph{outer} sense, unlike the other two definitions. However, this definition is only applicable when $M=\mathbb{H}^n$, and we are not aware of a generalization of this definition to arbitrary Hadamard manifolds.
			We also note that the second definition is directly related to our definition of h-convex functions through Theorem~\ref{thm:set_function}.

			\subsection{Failure of local-to-global principles}
			\label{sec:analysis-ep}
			
			In this subsection, we see that h-convexity is global in nature, unlike g-convexity, which is local in nature. To explain this, we provide counterexamples for which a \emph{local-to-global principle} does not hold. We aim to convince the readers through numerical experiments, rather than rigorous proofs.
			\begin{itemize}
				\item {\bf Geodesic convexity is a local property but horospherical convexity is not.} It is known that convexity is a local property in the following sense: if for every $x \in \mathbb{R}^n$, there exists a convex neighborhood $U$ of $x$ such that $f$ restricted to $U$ is convex, then $f$ is convex on $\mathbb{R}^n$ (see, for example, \cite[\S2.1]{hormander2007notions}). A similar conclusion holds for g-convexity. Instead of providing a rigorous proof, we provide an easy explanation to illustrate this: a twice-differentiable function $f$ is g-convex if and only if its Hessian is positive semidefinite everywhere, which is clearly a local property. In contrast, h-convexity is \emph{not} a local property in this sense. Specifically, in Appendix~\ref{app:convexity_local_property} we exhibit a function which is locally h-convex but not globally h-convex.
				
				\item {\bf Geodesic subdifferential is a local concept but horospherical subdifferential is not.} Recall that when $M=\mathbb{R}^n$, the subdifferential $\partial f(y)$ is defined as the set of $v \in \mathbb{R}^n$ such that $\langle v, d \rangle \leq f'(y, d)$ for all $d$, where $f'(y,v)=\lim_{t \to 0^{+}} \frac{f(y+tv)-f(y)}{t}$ (see, for example, \cite[\S IV.1]{hiriart1996convex}). Note that this definition is given in a local sense since $f'(y,d)$ is entirely determined by the local behavior of $f$ near $y$. Replacing $y+tv$ with $\exp_y(tv)$ yields the definition of the geodesic subdifferential (see, for example, \cite[\S3.4]{udriste1994convex}). On the other hand, there is also a global way to define the set $\partial f(y)$: it is the set of $v\in\mathbb{R}^n$ such that $f(x) \geq f(y) + \langle v, x-y \rangle$ for all $x \in \mathbb{R}^n$. This definition is directly generalized by our definition of the h-subdifferential.\footnote{The geodesic subdifferential can be also defined as $\partial f(y)=\{v\in T_{y}M:f(x)\geq f(y)+\langle v,\log_{y}(x)\rangle\}$, but this characterization is not very favorable since the function $x\mapsto \langle v,\log_{y}(x)\rangle$ is not g-convex (see the discussion below Proposition~\ref{prop:E-outer}).} In Appendix~\ref{app:subdifferential_local_concept}, we show that the horospherical subdidfferential $\ph f(y)$ \emph{cannot} be defined in a local sense.  
				
			\end{itemize}
			
			\section{Horospherically convex optimization}
			\label{sec:h-opt}
			
			\begin{table}
				\centering
				\begin{tabular}{|c|c|} 
					\hline 
					Our guarantee for Hadamard manifolds $M$  & Known guarantee for Euclidean spaces $\mathbb{R}^n$ \\
					\hline
					Thm.~\ref{thm:gd-c} & \cite[Thm.~3.3]{bansal2019potential} \\
					Thm.~\ref{thm:gd-sc} & \cite[Thm.~3.8]{bansal2019potential} \\
					Thm.~\ref{thm:subg-descent-c} & \cite[Thm.~3.2]{bubeck2015convex} \\
					Thm.~\ref{thm:subg-descent-sc} & \cite[\S3.2]{lacoste2012simpler} \\
					Thm.~\ref{thm:agm-c} & Slight variant of \cite{nesterov1983method} \\
					Thm.~\ref{thm:agm-sc} & \cite[\S2.2.1]{nesterov2018lectures}\\
					\hline
				\end{tabular}
				\caption{Comparison between our results and known convergence guarantees.}
				\label{table:rates}
			\end{table}
			
			Since h-convexity is not preserved under addition (see Appendix~\ref{app:rmk:basic}), in this section we consider minimizing a \emph{sum} of h-convex functions. Formally, we study the problem
			\begin{equation}
				\label{eq:problem}
				\min_{x\in M} \; f(x) = \frac{1}{m}\sum_{i=1}^{m} f_i(x),
			\end{equation}
			where each $f_i$ is an h-convex function on $M$. We refer to~\eqref{eq:problem} as an \emph{h-convex optimization problem}.
			
			In Section~\ref{sec:prob_example}, we give examples of h-convex optimization.
			Then we develop first-order algorithms for h-convex optimization (Sections~\ref{sec:GD},~\ref{sec:momentum}). Our resulting convergence guarantees exactly match their Euclidean counterparts, as summarized in Table~\ref{table:rates}.
			To prove convergence rates, we need additional regularity conditions, such as a Lipschitz constant or a smoothness constant. We summarize below the conditions that we will use.
			\begin{itemize}
				\item {\bf Gradient descent (Section~\ref{sec:smooth-gd}).} We assume that each $f_i$ is $L$-h-smooth (see Defintion~\ref{eq:defining-smooth}).  Note that this does \emph{not} imply that $f$ is $L$-h-smooth, or that $f$ satisfies the descent condition (unless $m=1$, Proposition~\ref{prop:smooth}). 
				\item {\bf Subgradient descent (Section~\ref{sec:subg-des}).} We assume that $f$ is $L$-Lipschitz continuous, meaning that $|f(x) - f(y)| \leq Ld(x, y)$. 
				Note if \emph{each} $f_i$ is $L$-Lipschitz, then $f$ is $L$-Lipschitz.    
				
				\item {\bf Accelerated gradient method (Section~\ref{sec:momentum}).} We assume that $f$ satisfies the {descent condition}: $f(x^{+})\leq f(x)-\frac{1}{2L}\|\nabla f(x)\|^{2}$, where $x^+=\exp_x(-\frac{1}{L}\nabla f(x))$.  In particular, it is enough to assume $f$ is $L$-h-smooth if $m=1$ (Propositon~\ref{prop:smooth}).
				It is an open question whether one can extend our results to the case where \emph{each} $f_i$ is $L$-h-smooth.
			\end{itemize}
			For simplicity, we assume $f_1, \ldots, f_m$ have the same Lipschitz constants.  However, we expect our algorithms and analysis extend to non-uniform Lipschitz constants with little effort.
			
			\subsection{Examples}
			\label{sec:prob_example}
			
			In this subsection, we investigate examples of (strongly) h-convex optimization problems. From Theorem \ref{thm:envelope_rep_h-convex}, the following are straightforward: scaled Busemann functions $B_{p,v}(\cdot)$ are h-convex, and squared distance functions $\frac{1}{2}d(\cdot,p)^2$ are $1$-strongly h-convex. More elementary examples come from the following proposition.
			\begin{proposition}
				\label{prop:d-is-conv}
				Let $p \in \calM$. Then, the distance function $y \mapsto d(y, p)$ is h-convex.
			\end{proposition}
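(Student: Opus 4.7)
The plan is to verify Definition~\ref{def:h-convex_funtion}(i) directly by producing, at every $y \in M$, an explicit h-subgradient $v \in T_y M$ such that $d(x,p) - d(y,p) \geq B_{y,v}(x)$ for all $x \in M$. Equivalently, one could invoke Theorem~\ref{thm:envelope_rep_h-convex}(i) and write $d(\cdot,p) = \sup_{w \in T_p M,\, \|w\|=1} B_{p,w}(\cdot)$; both routes rely on the same underlying geometric fact, so I will go with the direct approach.

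For $y = p$ the inequality reduces to $d(x,p) \geq 0$, which holds with the trivial choice $v = 0 \in T_p M$ (so that $B_{p,0} \equiv 0$). Assume $y \neq p$. Guided by Proposition~\ref{prop:basic}(v) together with the gradient formula~\eqref{eq:graddist}, the natural candidate is the unit vector at $y$ pointing \emph{away} from $p$,
\[
v \;=\; -\frac{\exp_y^{-1}(p)}{d(y,p)} \in T_y M.
\]
Let $\gamma \colon [0,\infty) \to M$ be the unit-speed geodesic ray with $\gamma(0) = y$ and $\gamma'(0) = -v$; because $M$ is Hadamard, $\gamma$ is defined on all of $[0,\infty)$, passes through $p$ at time $d(y,p)$, and satisfies $d(\gamma(t), p) = t - d(y,p)$ for every $t \geq d(y,p)$. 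By the sign convention introduced in Section~\ref{sec:pre3}, the Busemann function associated with $\gamma$ is precisely $B_{y,v}$.

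The core step is then a one-line triangle-inequality estimate: for any $x \in M$ and any $t \geq d(y,p)$,
\[
d(\gamma(t), x) \;\leq\; d(\gamma(t), p) + d(p, x) \;=\; t - d(y, p) + d(x, p).
\]
Subtracting $t$ and passing to the limit as $t \to \infty$ yields $B_{y,v}(x) \leq d(x,p) - d(y,p)$, which is exactly the required h-subgradient inequality. There is no genuine obstacle here: the argument rests only on the triangle inequality together with the existence and uniqueness of the geodesic extension of the segment from $y$ through $p$, both of which are standard properties of Hadamard manifolds. I would briefly sanity-check the limiting behavior by verifying $B_{y,v}(y) = 0$ (immediate since $\gamma$ is unit-speed from $y$) and $B_{y,v}(p) = -d(y,p)$ (the inequality is tight at $x=p$), both of which match the expected h-subgradient at $y$.
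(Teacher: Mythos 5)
Your proof is correct, and it establishes the key inequality by a genuinely more elementary route than the paper. The paper also reduces the claim to the single inequality $B_{y,v}(x) + d(y,p) \leq d(x,p)$ (phrased there, via Theorem~\ref{thm:envelope_rep_h-convex}, as $d(\cdot,p)$ being the supremum of the supports $B_{x,\xi(x,p)}(\cdot)+d(x,p)$), but it proves that inequality geometrically: it locates the point $z$ where the ray from $y$ through $p$ meets the horosphere through $x$, and then invokes the description~\eqref{djdjkdnk} of an open horoball as a union of open geodesic balls to conclude $d(z,p) \leq d(x,p)$. You instead prove the same inequality directly from the definition $B_{\gamma}(x) = \lim_{t\to\infty}(d(\gamma(t),x)-t)$: for $t \geq d(y,p)$ the ray satisfies $d(\gamma(t),p)=t-d(y,p)$, so the triangle inequality gives $d(\gamma(t),x)-t \leq d(x,p)-d(y,p)$, and passing to the limit finishes the argument. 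Your identification of the Busemann function of $\gamma$ with $B_{y,v}$ for $v=-\exp_y^{-1}(p)/d(y,p)$ matches the paper's sign convention ($v=-\gamma'(0)$), and the degenerate case $y=p$ is correctly handled with the zero-scaled Busemann function. The trade-off: your argument is shorter and self-contained, needing only the triangle inequality and the global minimality of geodesics on a Hadamard manifold, whereas the paper's proof makes visible the underlying h-convex geometry (geodesic balls nested inside horoballs), a fact it reuses elsewhere (e.g., in the proof of Proposition~\ref{prop:anotherone}).
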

			The proof of Proposition~\ref{prop:d-is-conv} can be found in Appendix~\ref{squareddistancefunctionsareBconvex}. Using Proposition~\ref{prop:basic}~(vii) with $f(x)=d(x,p)$ and $g(t)=\frac{1}{p}\max(t,0)^p$, we also have that $\frac{1}{p}d(\cdot,p)^p$ is h-convex. 
			\paragraph{Finding a representative point.}
			The fundamental problem of finding a representative point for a cluster of points $p_1, \ldots, p_N \in M$ arises in various fields such as data analysis and operations research.
			\begin{itemize}
				\item The \emph{minimal enclosing ball} problem (or the \emph{1-center} problem) \cite{elzinga1972minimum,arnaudon2013approximating} can be formulated as \eqref{eq:problem-intro} with $f(x) = \sup_{i} d(x, p_{i})$, which is h-convex with $m=1$.\footnote{It is straightforward to generalize this problem to $\min_x f(x)=\sup_{p\in C} d(x,p)$, where $C$ is any closed set in $M$.} 
				\item The problem of computing the \emph{Fr\'echet mean} (or the \emph{Karcher mean}) \cite{grove1973conjugate} can be formulated as \eqref{eq:problem} with $f(x) = \sum_{i=1}^{m} d(x, p_{i})^{2}$, which is the sum of strongly h-convex functions. 
				\item The problem of computing the geometric median an be formulated as \eqref{eq:problem} with $f(x) = \sum_{i=1}^{m} d(x, p_{i})$, which is the sum of h-convex functions. In location theory, this problem is also called the \emph{Weber problem} \cite{weber1922ueber}.
			\end{itemize}
			Recognizing whether a point $p$ is a weighted mean of a set $x_1, x_2, \ldots x_k \in M$ can also be cast as an h-convex problem with $m=1$~\cite{goodwin2024recognizingweightedmeansgeodesic}.

			\paragraph{Tyler's M-estimator, and the angular distribution.} 
			Consider $m$ unobserved vectors $y_1, \ldots, y_m \in \mathbb{R}^d$, sampled independently from a centered Gaussian distribution with unknown covariance matrix $\Sigma^* \succ 0$.
			Each vector is then normalized to lie on the unit sphere via $x_i = \frac{y_i}{\|y_i\|}$.
			Given only the normalized samples $x_1, \ldots, x_m$, how can we estimate the underlying covariance $\Sigma^*$?
			
			The samples $x_i$ follow the so-called \emph{angular distribution}, parameterized by a positive definite matrix $\Sigma^*$ which is defined up to scale (so we may assume without loss of generality $\det(\Sigma^*) = 1$).
			The negative log-likelihood is given by
			\[
			\ell(\Sigma) = \frac{1}{m} \sum_{i=1}^m d \log(x_i^\top \Sigma^{-1} x_i), \qquad \Sigma \in \mathrm{SPD}(n),
			\]
			An MLE (i.e., a minimizer of $\ell$) is known as \emph{Tyler's M-estimator}, and serves as a \emph{robust} method for covariance estimation.
			Tyler~\cite{tyler1987distribution} showed that it is the most robust estimator for elliptical distributions.
			The negative log-likelihood $\ell$ is not convex in the Euclidean sense, but becomes geodesically convex when $\mathrm{PD}(n)$ is endowed with the affine-invariant metric (see Example~\ref{examplePD}).
			In fact, \emph{each summand $\log(x_i^\top \Sigma^{-1} x_i)$ is a scaled Busemann function}, so the optimization problem $\min_{\Sigma \in \mathrm{SPD}(n)} \ell(\Sigma)$ is an instance of h-convex optimization.
			
			Tyler's M-estimator is a special case of \emph{Grassmannian M-estimators of scatter}, whose negative log-likelihoods are also sums of Busemann functions~\cite{ciobotaru2018geometrical}.
			Auderset, Mazza and Ruh~\cite{auderset2005angular} were the first to use the geometry of $\mathrm{PD}(n)$ to study the properties of Tyler's M-estimator.
			See~\cite{wiesel2015gconvexity} for further applications of geodesic convexity in robust covariance estimation, and~\cite{franks2021neartyler} for connections to scaling problems.
			
			\paragraph{Horn's problem.}
			Given $m \geq 3$ vectors $\lambda_1, \ldots, \lambda_m \in \mathbb{R}^n$, when do there exist Hermitian matrices $A_1, \ldots, A_m \in \mathbb{C}^{n \times n}$ such that $\sum_{i=1}^m A_i = 0$ and the spectrum of $A_i$ equals $\lambda_i$ for each $i$? This question, known as \emph{Horn's problem}, has a long history in pure mathematics. It was first studied by Horn~\cite{Horn1962}. General results from symplectic geometry imply that such matrices $A_i$ exist if and only if the vectors $\lambda_i$ satisfy a finite system of linear inequalities~\cite{Kirwan1984,f5ed6840-1e83-3412-9379-8a84ba9e445d}. The set of inequalities was precisely characterized by Klyachko~\cite{klyachko1998stable} and by Knutson and Tao~\cite{knutson1999honeycomb}.
			Taking an algorithmic perspective, can one \emph{efficiently} decide whether such Hermitian matrices $A_i$ exist, and find them when they do?
			The number of inequalities is exponentially large, so directly checking them is inefficient.
			
			Remarkably, {this problem can be recast as minimizing a sum of scaled Busemann functions}. Assume that each $\lambda_i$ has rational entries, and sample random unitary matrices $U_i \in \mathrm{U}(n)$. Let $b_i$ be the Busemann function on $\mathrm{PD}(n)$ associated to the geodesic $t \mapsto \exp(t U_i \Lambda_i U_i^*)$, where $\Lambda_i$ is the diagonal matrix with diagonal $\lambda_i$. Define
			\[
			f(P) = \sum_{i=1}^m b_i(P), \quad \text{for } P \in \mathrm{PD}(n).
			\]
			Then matrices $A_i$ satisfying Horn's condition exist if and only if $f$ is bounded below, in which case a minimizer of $f$ encodes the desired matrices $A_i$.
			One way to see this connection to geodesic convexity is by expressing Horn's problem within the noncommutative optimization framework of~\cite{burgissernoncommutativeoptimization}.\footnote{Specifically, let $O_\lambda$ denote the set of Hermitian matrices with spectrum $\lambda$, and consider the diagonal action of $U \in \mathrm{U}(n)$ on $\prod_{i=1}^m O_{\lambda_i}$ given by $U \cdot (A_1, \ldots, A_m) = (U A_1 U^*, \ldots, U A_m U^*)$. This can be lifted to a $\mathrm{GL}(n)$-action on a projective space via the Borel–Weil–Bott–Kostant theorem; see Section 7 of~\cite{KNUTSON200061} where an explicit lift is given. Using this lift and~\cite[Lem.~2.32]{hirai2023convex}, one finds that the associated Kempf–Ness function is exactly the sum of scaled Busemann functions $f$.} To the best of our knowledge, this geometric method for studying Horn's problem was pioneered by Kapovich, Leeb and Millson~\cite{kapovich2009convex}. 
			
			B\"urgisser and Ikenmeyer~\cite{burgisserdecidinghorn} give a polynomial-time algorithm for the decision version of Horn's problem. However, it remains open whether the \emph{search} version (i.e., find $A_i$) admits a polynomial-time algorithm. 
			The main obstacle, discussed in Section~\ref{sec:intro}, is that known diameter bounds are exponentially large.
			Moreover, the iteration complexity of all existing high-precision algorithms for geodesically convex optimization depends polynomially on this diameter~\cite{burgissernoncommutativeoptimization}.
			
			\subsection{Gradient descent}\label{sec:GD}
			
			In this subsection, we study a generalization of (sub)gradient descent to solve the problem \eqref{eq:problem}. 
			If each $f_i$ is h-$\frac{1}{s}$-smooth, then we have the upper bound $f(y) \leq f(x) + \frac{1}{m} \sum_{i=1}^m Q_{x, \nabla f_i(x)}^{1/s}(y)$. 
			It makes sense to minimize this upper bound, like follows:
			\begin{equation}
				\label{eq:gd}
				x_{k+1} = \argmin_{x\in M}\left\{ \frac{1}{m}\sum_{i=1}^{m}Q_{x_{k}, g_{ik}}^{1/s_k}\left(x\right)\right\} , \quad  \text{where }g_{ik} \in \ph_{\mu} f_i(x_k).
			\end{equation}
			
			When the objective function is a single h-convex function ($m=1$), this algorithm simplifies to $x_{k+1} = \exp_{x_k}(-s_k g_k)$, i.e., standard Riemannian (sub)gradient step with exponential retraction.
			The subproblem in~\eqref{eq:gd} corresponds to computing a Fr\'echet mean (with uniform weights), and only requires access to a subgradient of each $f_i$ at $x_k$.
			
			\subsubsection{Gradient descent for smooth functions}
			\label{sec:smooth-gd}
			
			In this subsection, we analyze the convergence rates of the gradient descent method \eqref{eq:gd} for unconstrained smooth h-convex optimization problems.   
			The following theorems extend the known convergence guarantees of gradient descent in the convex setting.

			\begin{theorem}
				\label{thm:gd-c}
				If each $f_i$ is h-convex and $L$-h-smooth, the gradient descent method \eqref{eq:gd} with $s_k=1/L$ satisfies
				\[
				f(x_N)-f(x^*) \leq \frac{L}{2N}d(x_0,x^*)^2.
				\]
			\end{theorem}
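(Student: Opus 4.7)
The plan is to analyze the method through the surrogate upper bound
$$\phi_k(x) := \frac{1}{m}\sum_{i=1}^m Q_{x_k, \nabla f_i(x_k)}^{L}(x),$$
whose minimizer is by definition $x_{k+1}$ and which satisfies $\phi_k(x_k)=0$ (since $Q_{y,v}^{L}(y)=0$). Since each $Q_{x_k,\nabla f_i(x_k)}^{L}$ equals a constant plus $\frac{L}{2}$ times a squared-distance function (to the point $\exp_{x_k}(-\frac{1}{L}\nabla f_i(x_k))$), it is $L$-strongly g-convex on the Hadamard manifold $M$; averaging preserves this, so $\phi_k$ itself is $L$-strongly g-convex. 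Combined with $\nabla\phi_k(x_{k+1})=0$, strong g-convexity gives the one-step inequality
$$\phi_k(x^*) \geq \phi_k(x_{k+1}) + \frac{L}{2}d(x_{k+1},x^*)^2.$$

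Next I would sandwich $\phi_k$ between $f$ and a lower bound involving $f(x^*)$. For the upper side, $L$-h-smoothness of each $f_i$ at the pair $(x_k,x_{k+1})$ and averaging yields the descent inequality
$$f(x_{k+1}) - f(x_k) \leq \phi_k(x_{k+1}) \leq \phi_k(x_k) = 0,$$
so $\{f(x_k)\}$ is non-increasing. For the lower side, h-convexity of each $f_i$ combined with~\eqref{uno} of Lemma~\ref{lem:horotriangle} gives
$$f_i(x^*) - f_i(x_k) \geq B_{x_k,\nabla f_i(x_k)}(x^*) \geq Q_{x_k,\nabla f_i(x_k)}^{L}(x^*) - \frac{L}{2}d(x^*,x_k)^2,$$
and averaging over $i$ yields $\phi_k(x^*) \leq f(x^*) - f(x_k) + \frac{L}{2}d(x^*,x_k)^2$.

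Chaining these three displays produces the telescoping inequality
$$f(x_{k+1}) - f(x^*) \leq \frac{L}{2}\bigl[d(x_k,x^*)^2 - d(x_{k+1},x^*)^2\bigr].$$
Summing over $k=0,\ldots,N-1$ telescopes the right-hand side to $\frac{L}{2}d(x_0,x^*)^2$, and monotonicity of $f(x_k)$ lets me replace $f(x_{k+1})$ by $f(x_N)$ on the left, giving the claim. The conceptual obstacle---and the reason the Euclidean proof does not transfer verbatim---is that a sum of $L$-h-smooth functions need not be $L$-h-smooth on $M$, so one cannot invoke h-smoothness for $f$ directly; the surrogate $\phi_k$ evades this by being assembled from the individual $f_i$'s while still being $L$-strongly g-convex. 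The other delicate point is the use of~\eqref{uno}, which trades the tight horospherical lower bound $B_{x_k,v}$ supplied by h-convexity for the quadratic form $Q_{x_k,v}^{L}$ at the cost of an additive $\frac{L}{2}d(\cdot,x_k)^2$ term---precisely the term absorbed by the strong-convexity surplus generated at the minimization step, which is what makes the telescoping work on a curved manifold.
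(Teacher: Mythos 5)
Your proof is correct and uses essentially the same ingredients as the paper's: the averaged surrogate $\frac{1}{m}\sum_i Q^{L}_{x_k,\nabla f_i(x_k)}$, its $L$-strong g-convexity at the minimizer $x_{k+1}$, the h-smoothness descent bound, and the horospherical law of cosines (Lemma~\ref{lem:horotriangle}) to convert the Busemann lower bound at $x^*$ into a quadratic one. The only difference is bookkeeping: the paper packages these three inequalities into the non-increasing potential $\mathcal{E}_k = k(f(x_k)-f(x^*)) + \frac{L}{2}d(x_k,x^*)^2$, whereas you telescope a per-step inequality and invoke monotonicity of $f(x_k)$; both yield the identical bound.
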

			
			\begin{theorem}
				\label{thm:gd-sc}
				If each $f_i$ is $\mu$-strongly h-convex and $L$-h-smooth, the gradient descent method \eqref{eq:gd} with $s_k=1/L$ satisfies
				\begin{align} 
					\label{eq:first_result}
					f(x_{N})-f(x^{*}) + \frac{\mu}{2}d(x_{N},x^{*})^{2} &\leq\left(1-\frac{\mu}{L}\right)^{N}\left(f(x_{0})-f(x^{*})+\frac{\mu}{2}d(x_{0},x^{*})^{2}\right), \\
					\label{eq:second_result}
					f(x_{N})-f(x^{*}) &\leq\left(1-\frac{\mu}{L}\right)^{N}\left(f(x_{0})-f(x^{*})\right).
				\end{align}
			\end{theorem}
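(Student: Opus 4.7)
The plan is to derive a single ``key inequality'' from which both rates follow, paralleling the potential-function proof of~\cite{bansal2019potential} in the Euclidean case but carried out in the h-convex framework. First I would establish the pointwise estimate
\[
f(x_{k+1}) \;\leq\; f(x) + \tfrac{L-\mu}{2}\,d(x, x_k)^2 - \tfrac{L}{2}\,d(x, x_{k+1})^2 \qquad \text{for all } x \in M.
\]
The derivation chains four ingredients: (a)~$L$-h-smoothness of each $f_i$ summed gives $f(x_{k+1}) \leq f(x_k) + h(x_{k+1})$, where $h(\cdot) := \tfrac{1}{m}\sum_{i=1}^{m} Q_{x_k,\nabla f_i(x_k)}^{L}(\cdot)$; (b)~$h$ is a uniform average of functions of the form $\tfrac{L}{2}d(p_i,\cdot)^2 + \mathrm{const}$, hence $L$-strongly g-convex with minimizer $x_{k+1}$, so $h(x_{k+1}) \leq h(x) - \tfrac{L}{2}d(x, x_{k+1})^2$; (c)~Lemma~\ref{lem:horotriangle}~\eqref{duo} applied summand-wise upper bounds $h(x)$ by $\tfrac{1}{m}\sum_{i=1}^{m} Q_{x_k,\nabla f_i(x_k)}^{\mu}(x) + \tfrac{L-\mu}{2}d(x, x_k)^2$; and (d)~$\mu$-strong h-convexity of each $f_i$ yields $\tfrac{1}{m}\sum_{i=1}^{m} Q_{x_k,\nabla f_i(x_k)}^{\mu}(x) \leq f(x) - f(x_k)$. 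Combining these four estimates gives the key inequality; setting $x = x_k$ inside it recovers the monotone descent $f(x_{k+1}) \leq f(x_k)$ as a convenient by-product.

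To prove~\eqref{eq:first_result}, I would evaluate the key inequality at $x = x^*$ and rearrange to
\[
f(x_{k+1}) - f(x^*) + \tfrac{L}{2}d(x_{k+1}, x^*)^2 \;\leq\; \tfrac{L-\mu}{2}d(x_k, x^*)^2,
\]
then take the $\tfrac{\mu}{L}$-weighted combination of this with the $(1 - \tfrac{\mu}{L})$-weighted descent $f(x_{k+1}) - f(x^*) \leq f(x_k) - f(x^*)$. The two $f(x_{k+1}) - f(x^*)$ coefficients collapse to $1$, and the right-hand side becomes $(1-\tfrac{\mu}{L})[f(x_k) - f(x^*) + \tfrac{\mu}{2}d(x_k, x^*)^2]$, giving the one-step contraction of the potential $\Phi_k := f(x_k) - f(x^*) + \tfrac{\mu}{2}d(x_k, x^*)^2$. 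Iterating $N$ times yields~\eqref{eq:first_result}.

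For~\eqref{eq:second_result} the target potential no longer carries the squared-distance term, so the previous trick fails. My plan is to apply the key inequality at $x = \gamma(t)$, where $\gamma:[0,1]\to M$ is the geodesic from $x_k$ to $x^*$. Dropping the non-positive term $-\tfrac{L}{2}d(\gamma(t), x_{k+1})^2$ and using $d(\gamma(t), x_k) = t\,d(x_k, x^*)$, together with $\mu$-strong g-convexity of $f$ --- which holds by Proposition~\ref{prop:basic}~(iv) applied summand-wise --- to upper bound $f(\gamma(t)) \leq (1-t) f(x_k) + t f(x^*) - \tfrac{\mu}{2}t(1-t)d(x_k, x^*)^2$, the total coefficient of $d(x_k, x^*)^2$ on the right-hand side simplifies to $\tfrac{t(Lt-\mu)}{2}$. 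Choosing $t = \mu/L$ cancels this coefficient exactly, producing $f(x_{k+1}) - f(x^*) \leq (1 - \mu/L)(f(x_k) - f(x^*))$, and iterating yields~\eqref{eq:second_result}.

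The main obstacle I anticipate is recognizing in the first step that Lemma~\ref{lem:horotriangle}~\eqref{duo} is the precise bridge between the $L$-scale upper bound coming from h-smoothness at $x_{k+1}$ and the $\mu$-scale lower bound coming from strong h-convexity evaluated at an arbitrary $x$; once the key inequality is in hand, the rest is weighted-average bookkeeping. A secondary subtlety is that naively setting $x = x^*$ delivers only the mixed potential bound~\eqref{eq:first_result}, whereas isolating the pure function-value contraction~\eqref{eq:second_result} requires interpolating along $\gamma_{x_k}^{x^*}$ at the specific parameter $t = \mu/L$.
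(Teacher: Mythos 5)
Your proposal is correct. The derivation of the key inequality and the proof of \eqref{eq:first_result} follow essentially the same path as the paper: both combine the $L$-h-smoothness upper bound evaluated at $x_{k+1}$, the $L$-strong g-convexity of the surrogate $\varphi(x)=\frac{1}{m}\sum_i Q^{L}_{x_k,\nabla f_i(x_k)}(x)$ at its minimizer $x_{k+1}$, the horospherical law of cosines \eqref{duo}, and $\mu$-strong h-convexity at $x^*$, then contract the potential $f(x_k)-f(x^*)+\frac{\mu}{2}d(x_k,x^*)^2$. Where you genuinely diverge is \eqref{eq:second_result}. The paper proves it via a Polyak--\L ojasiewicz-type argument whose crux is Lemma~\ref{prop:nonincreasing} --- the non-obvious fact that $L\mapsto\min_x\frac{1}{m}\sum_i L^2 d(x,\exp_y(-v_i/L))^2$ is non-increasing, proved by differentiating in $L$ and invoking the triangle comparison twice --- which yields $\frac{\mu}{L}(f(x_k)-f^*)\leq f(x_k)-f(x_{k+1})$. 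You instead evaluate the key inequality along the geodesic $\gamma_{x_k}^{x^*}$ at $t=\mu/L$ and use $\mu$-strong g-convexity of $f$ (valid by Proposition~\ref{prop:basic}~(iv) applied summand-wise, since strong g-convexity survives averaging); the coefficient $\frac{t(Lt-\mu)}{2}$ of $d(x_k,x^*)^2$ indeed vanishes at that $t$, giving the same one-step contraction. Your route is the classical Euclidean interpolation argument transplanted verbatim, is entirely self-contained (it needs only Lemma~\ref{lem:horotriangle} and facts already established), and avoids the paper's auxiliary lemma altogether; the paper's route costs more machinery but produces the PL-type inequality \eqref{eq:PL1} and Lemma~\ref{prop:nonincreasing} as reusable byproducts, which the authors flag as being of independent interest.
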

			The first result~\eqref{eq:first_result} relies on the \emph{horospherical law of cosines} (Lemma~\ref{lem:horotriangle}). 
			The proof of the second result~\eqref{eq:second_result} uses the following non-obvious lemma, which may be of independent interest:
			
			\begin{lemma}\label{prop:nonincreasing}
				Let $y \in M$ and $v_i \in T_y M$ for $i=1, \ldots, m$.
				The univariate function 
				$$h(L) := \min_{x \in M} \bigg\{ \frac{1}{m} \sum_{i=1}^m L^2 d\Big(x, \exp_y\big(-\frac{1}{L} v_{i}\big)\Big)^2 \bigg\}, \quad \quad L > 0$$
				is non-increasing in $L$.  (If $M = \reals^d$, then $h$ is constant.)
			\end{lemma}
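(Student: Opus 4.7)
The plan is to reparametrize the optimization variable $x$ by a tangent vector at $y$, writing $x = \exp_y(-\tfrac{1}{L} w)$. Since $\exp_y : T_y M \to M$ is a diffeomorphism (Cartan--Hadamard), this is a valid change of variables, and
$$h(L) \;=\; \inf_{w \in T_y M} F(L, w), \qquad F(L, w) := \frac{1}{m}\sum_{i=1}^m L^2\, d\!\left(\exp_y(-\tfrac{1}{L} w),\, \exp_y(-\tfrac{1}{L} v_i)\right)^2.$$
It therefore suffices to show that for each fixed $w$, the map $L \mapsto F(L, w)$ is non-increasing on $L > 0$; taking the pointwise infimum over $w$ then yields the monotonicity of $h$.

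Termwise, setting $t := 1/L$ and $g_i(t) := d(\exp_y(-tw), \exp_y(-tv_i))$, the $i$-th summand of $F(L, w)$ equals $(g_i(t)/t)^2$, so the desired monotonicity in $L$ reduces to showing that $t \mapsto g_i(t)/t$ is non-decreasing on $t > 0$ for every $i$. The key technical input is the standard CAT(0) property of Hadamard manifolds: for any two geodesics $\gamma_1, \gamma_2$ in $M$ issuing from a common point, the function $t \mapsto d(\gamma_1(t), \gamma_2(t))$ is convex (see, e.g., \cite[\S II.2]{bridson2013metric}). Applied to $\gamma_1(t) = \exp_y(-tw)$ and $\gamma_2(t) = \exp_y(-tv_i)$, this gives convexity of $g_i$ together with $g_i(0) = 0$, and hence the secant slope $g_i(t)/t$ through the origin is non-decreasing on $t > 0$. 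Squaring a non-negative, non-decreasing function preserves monotonicity, so $(g_i(t)/t)^2$ is non-decreasing in $t$, i.e., non-increasing in $L$, as required.

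For the parenthetical: when $M = \mathbb{R}^n$, the geodesics are straight lines, so $g_i(t) = t \|w - v_i\|$, whence $g_i(t)/t$ is constant in $t$; therefore $F(L, w)$ is constant in $L$ for every $w$, and so is $h$. The only conceptual step is recognizing the correct reparameterization $x = \exp_y(-\tfrac{1}{L} w)$, which decouples the $L$-dependence of the feasible point from that of the target points $\exp_y(-\tfrac{1}{L} v_i)$; once this substitution is made, the rest is a direct appeal to CAT(0) geometry and the elementary monotonicity of secant slopes of convex functions vanishing at the origin.
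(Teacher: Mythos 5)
Your proof is correct, and it takes a genuinely different route from the paper's. The paper fixes the minimizer $x^*(L)$ (which exists and depends differentiably on $L$ by strong g-convexity and the implicit function theorem), invokes the envelope theorem to reduce $\frac{d}{dL}h(L)$ to the partial derivative $\frac{\partial f}{\partial L}(x^*(L),L)$, and then bounds that derivative by zero using two applications of the triangle comparison inequality (Lemma~\ref{lem:rev_toponogov}) together with the first-order optimality condition $\sum_i \exp^{-1}_{x^*}(\gamma_i(t))=0$. You instead avoid differentiation entirely: the $L$-dependent change of variables $x=\exp_y(-\tfrac{1}{L}w)$ is a bijection of the feasible set for each fixed $L$ (Cartan--Hadamard), and it converts each summand into $(g_i(t)/t)^2$ with $t=1/L$ and $g_i(t)=d(\exp_y(-tw),\exp_y(-tv_i))$; the CAT(0) convexity of the distance between two geodesics issuing from a common point, plus $g_i(0)=0$, makes each secant slope $g_i(t)/t$ non-decreasing, and the pointwise infimum over $w$ of non-increasing functions of $L$ is non-increasing. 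Your argument is shorter, needs no smoothness of $L\mapsto x^*(L)$, and in fact only uses CAT(0) comparison geometry, so it would extend verbatim to general Hadamard spaces where the implicit-function-theorem step of the paper's proof is unavailable; the paper's derivative computation is more explicit but buys nothing extra for the statement at hand. The Euclidean constancy claim also follows cleanly from your parametrization.
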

			
			The proofs of Theorem~\ref{thm:gd-c}, Theorem \ref{thm:gd-sc} and Lemma~\ref{prop:nonincreasing} can be found in Appendices~\ref{app:thm:gd-c}, \ref{app:thm:gd-sc} and \ref{app:prop:nonincreasing}, respectively.

			\subsubsection{Subgradient descent for non-smooth functions}
			\label{sec:subg-des}
			
			In this subsection, we consider problems where the objective function $f$ is not necessarily differentiable. 
			We assume that a constraint $x \in C$ is given, where $C$ is a compact and g-convex set in $M$ whose diameter is $D$. 
			We consider the following \emph{projected subgradient  method}: 
			Given $x_k$, first update $x'_{k+1}$ using \eqref{eq:gd}. Then project $x'_{k+1}$ back to $C$ as $x_{k+1} =\mathcal{P}_{C}(x_{k+1}')$, where $\mathcal{P}_C$ is the metric projection onto $C$. 
			\begin{theorem}
				\label{thm:subg-descent-c}
				If each $f_i$ is h-convex and $f$ is $L$-Lipschitz on $C$, then the projected subgradient method with $s_{k}=\frac{D}{L\sqrt{N+1}}$ satisfies
				\[
				f\left(\bar{x}_{N}\right)-f\left(x^{*}\right)\leq\frac{DL}{\sqrt{N+1}},
				\]
				where $\bar{x}_{0}=x_{0}$ and $\bar{x}_{k+1}=\exp_{\bar{x_{k}}}(\frac{1}{k+2}\exp^{-1}_{\bar{x}_{k}}(x_{k+1}))$.
			\end{theorem}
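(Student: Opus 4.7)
The plan is to mimic the classical Euclidean subgradient analysis step by step, converting the Busemann-function inequalities coming from h-convexity into a distance-squared telescoping recursion via the horospherical law of cosines (Lemma~\ref{lem:horotriangle}).  I will work in three conceptual blocks: (i) a one-step inequality at the unprojected iterate, (ii) transfer to the projected iterate and telescoping, and (iii) converting the running-average $\bar x_N$ into a function-value bound.

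First I would reformulate the update~\eqref{eq:gd}. Setting $y_{ik}=\exp_{x_k}(-s_k g_{ik})$, the definition of $Q^{1/s_k}_{x_k,g_{ik}}$ shows that $x'_{k+1}$ is the (uniform) Fr\'echet mean $\argmin_x \tfrac{1}{m}\sum_i d(y_{ik},x)^2$. Applying Lemma~\ref{lem:horotriangle}~\eqref{uno} with $y=x_k$, $v=g_{ik}$, $L=1/s_k$ and $x=x^*$, and using h-convexity of $f_i$ (so $B_{x_k,g_{ik}}(x^*)\le f_i(x^*)-f_i(x_k)$), I get
\[
\tfrac{1}{2s_k}d(y_{ik},x^*)^2 \;\le\; f_i(x^*)-f_i(x_k) + \tfrac{1}{2s_k}d(x_k,x^*)^2 + \tfrac{s_k}{2}\|g_{ik}\|^2.
\]
Averaging over $i$ and using that $F(x)=\tfrac{1}{m}\sum_i d(y_{ik},x)^2$ is $2$-strongly g-convex with $\nabla F(x'_{k+1})=0$, so $\tfrac{1}{m}\sum_i d(y_{ik},x^*)^2\ge \tfrac{1}{m}\sum_i d(y_{ik},x'_{k+1})^2 + d(x'_{k+1},x^*)^2$, yields the one-step inequality
\[
\tfrac{1}{2s_k} d(x'_{k+1},x^*)^2 \;\le\; f(x^*)-f(x_k) + \tfrac{1}{2s_k} d(x_k,x^*)^2 + \tfrac{s_k L^2}{2},
\]
where I bound $\|g_{ik}\|\le L$ using Proposition~\ref{prop:basic}~(iii) (h-subgradients are g-subgradients) together with the Lipschitz hypothesis.

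Next I would pass to the projected iterate. Since $C$ is closed and g-convex in a Hadamard manifold, the metric projection $\mathcal{P}_C$ is $1$-Lipschitz, and $x^*\in C$ gives $d(x_{k+1},x^*)\le d(x'_{k+1},x^*)$. Rearranging and summing with the constant stepsize $s_k=s=D/(L\sqrt{N+1})$, the distance-squared terms telescope and the initial distance is bounded by $D$:
\[
\sum_{k=0}^{N}\bigl(f(x_k)-f(x^*)\bigr) \;\le\; \tfrac{1}{2s}d(x_0,x^*)^2 + \tfrac{(N+1)s L^2}{2} \;\le\; \tfrac{D^2}{2s} + \tfrac{(N+1)sL^2}{2} \;=\; DL\sqrt{N+1},
\]
the last equality being the usual optimal balancing.

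Finally, I would convert this ergodic bound into a bound at $\bar x_N$. Since each $f_i$ is g-convex (Proposition~\ref{prop:basic}~(iv)), so is $f$, and the recursion $\bar x_{k+1}=\exp_{\bar x_k}\!\bigl(\tfrac{1}{k+2}\exp^{-1}_{\bar x_k}(x_{k+1})\bigr)$ is a convex combination along a geodesic with weight $\tfrac{1}{k+2}$; g-convexity yields $(k+2)f(\bar x_{k+1})\le (k+1)f(\bar x_k)+f(x_{k+1})$, and induction gives $f(\bar x_N)\le \tfrac{1}{N+1}\sum_{k=0}^N f(x_k)$. Dividing the telescoped bound by $N+1$ closes the proof. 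The main obstacle I expect is the second step of the first block: establishing the right contraction from the Fr\'echet-mean update rather than from a simple exponential step. This is resolved cleanly by $2$-strong g-convexity of $F$ together with non-expansiveness of $\mathcal P_C$, which together fully replace the Euclidean identity $\|x_k-sg_k-x^*\|^2=\|x_k-x^*\|^2-2s\langle g_k,x_k-x^*\rangle+s^2\|g_k\|^2$.
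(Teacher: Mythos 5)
Your overall architecture matches the paper's: a one-step distance-squared inequality at the unprojected iterate obtained from Lemma~\ref{lem:horotriangle} plus strong g-convexity of the Fr\'echet-mean objective, then non-expansiveness of $\mathcal{P}_C$ (Lemma~\ref{lem:proj}), telescoping with the balanced step size, and geodesic averaging via g-convexity of $f$. The projection, telescoping, and averaging blocks are all correct.

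There is, however, a genuine gap in your first block. By applying Lemma~\ref{lem:horotriangle} to each summand separately and then averaging, your per-step error term comes out as $\frac{s_k}{2}\cdot\frac{1}{m}\sum_{i=1}^m\|g_{ik}\|^2$, and you bound each $\|g_{ik}\|\le L$. But the theorem only assumes that the \emph{average} $f$ is $L$-Lipschitz on $C$, not each $f_i$; Proposition~\ref{prop:basic}~(iii) then only controls $\|g_k\|$ for $g_k=\frac{1}{m}\sum_i g_{ik}\in\partial f(x_k)$, not the individual $\|g_{ik}\|$. The quantity $\frac{1}{m}\sum_i\|g_{ik}\|^2$ can be arbitrarily large even when $f$ is $0$-Lipschitz (take $f_1=B_{p,v}$, $f_2=B_{p,-v}$ with $\|v\|$ huge in $\mathbb{R}^n$), so your route does not yield the claimed $\frac{s_kL^2}{2}$ error term under the stated hypotheses. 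The paper avoids this by keeping $\min_{x}\frac{1}{m}\sum_iQ^{1/s_k}_{x_k,g_{ik}}(x)$ intact and lower-bounding it via the linearizations $Q^{1/s_k}_{x_k,g_{ik}}(x)\ge\langle g_{ik},\exp^{-1}_{x_k}(x)\rangle+\frac{1}{2s_k}d(x_k,x)^2$; the subgradients then combine \emph{inside} the inner product to give $-\frac{s_k}{2}\|g_k\|^2\ge-\frac{s_kL^2}{2}$ (see~\eqref{eq:subgd-c-ineq3}). Your argument is repairable either by adopting this linearization step or by strengthening the hypothesis to each $f_i$ being $L$-Lipschitz, but as written it proves a weaker statement than the theorem.
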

			\begin{theorem}
				\label{thm:subg-descent-sc}
				When each $f_i$ is $\mu$-strongly convex and $f$ is $L$-Lipschitz on $C$, the projected subgradient method with $s_{k}=\frac{2}{\mu(k+2)}$ satisfies
				\[
				f\left(\bar{x}_{N}\right)-f\left(x^{*}\right)\leq\frac{2L^{2}}{\mu(N+2)}.
				\]
				where $\bar{x}_{0}=x_{0}$ and $\bar{x}_{k+1}=\exp_{\bar{x_{k}}}(\frac{2}{k+3}\exp^{-1}_{\bar{x}_{k}}(x_{k+1}))$.
			\end{theorem}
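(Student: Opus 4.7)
The plan is to mimic the classical Euclidean analysis of the projected subgradient method for strongly convex objectives (as in \cite[\S3.2]{lacoste2012simpler}). Its core is the per-iteration recursion
\[
d(x_{k+1}, x^*)^2 \leq (1 - s_k\mu)\, d(x_k, x^*)^2 - 2s_k\, (f(x_k) - f(x^*)) + s_k^2\, L^2,
\]
from which the claimed rate will follow by a standard telescoping with $s_k = \tfrac{2}{\mu(k+2)}$ together with a Jensen-type averaging through the sequence $\bar{x}_k$. The main obstacle is establishing this recursion: on a Hadamard manifold the ordinary triangle comparison (Lemma~\ref{lem:rev_toponogov}) gives the wrong direction to upper-bound $d(y_{ik}, x^*)^2$, and the strong h-convex subgradient inequality at $x_k$ features $Q^\mu$ while the update \eqref{eq:gd} is driven by $Q^{1/s_k}$. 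Bridging these is precisely what inequality~\eqref{duo} of Lemma~\ref{lem:horotriangle} enables.

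Concretely, fix $k$, let $g_{ik} \in \ph_\mu f_i(x_k)$, and set $y_{ik} = \exp_{x_k}(-s_k g_{ik})$. Since $s_k = \tfrac{2}{\mu(k+2)} \leq \tfrac{1}{\mu}$, inequality~\eqref{duo} at $L = 1/s_k$ gives $Q^{1/s_k}_{x_k, g_{ik}}(x^*) \leq Q^\mu_{x_k, g_{ik}}(x^*) + \tfrac{1/s_k - \mu}{2}\, d(x_k, x^*)^2$. Chaining with the strong h-convex lower bound $f_i(x^*) - f_i(x_k) \geq Q^\mu_{x_k, g_{ik}}(x^*)$, expanding $Q^{1/s_k}_{x_k, g_{ik}}(x^*) = -\tfrac{s_k}{2}\|g_{ik}\|^2 + \tfrac{1}{2s_k} d(y_{ik}, x^*)^2$, and multiplying through by $2s_k$ yields
\[
d(y_{ik}, x^*)^2 \leq (1 - s_k\mu)\, d(x_k, x^*)^2 - 2s_k\, (f_i(x_k) - f_i(x^*)) + s_k^2\, \|g_{ik}\|^2.
\]
Averaging over $i$ and bounding $\tfrac{1}{m}\sum_i \|g_{ik}\|^2 \leq L^2$ produces the same inequality with $f$ on the right. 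Since $x'_{k+1}$ is the uniform Fr\'echet mean of $\{y_{ik}\}$ and $x \mapsto \tfrac{1}{m}\sum_i d(y_{ik}, x)^2$ is $2$-strongly g-convex at this minimizer, I have $d(x'_{k+1}, x^*)^2 \leq \tfrac{1}{m}\sum_i d(y_{ik}, x^*)^2$; non-expansivity of the metric projection onto the g-convex set $C$ then gives $d(x_{k+1}, x^*)^2 \leq d(x'_{k+1}, x^*)^2$, closing the recursion.

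Plugging $s_k = \tfrac{2}{\mu(k+2)}$ into the recursion (so $1 - s_k\mu = \tfrac{k}{k+2}$), multiplying through by $\tfrac{\mu(k+1)(k+2)}{4}$, and setting $A_k := \tfrac{\mu k(k+1)}{4} d(x_k, x^*)^2$ turns it into the telescoping form $(k+1)(f(x_k) - f(x^*)) \leq A_k - A_{k+1} + \tfrac{L^2(k+1)}{\mu(k+2)}$. Summing from $k=0$ to $N$ and using $A_0 = 0$, $A_{N+1} \geq 0$, and $\tfrac{k+1}{k+2} \leq 1$ gives
\[
\sum_{k=0}^N (k+1)(f(x_k) - f(x^*)) \leq \tfrac{L^2(N+1)}{\mu}.
\]

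Finally, g-convexity of $f$ (inherited from each $f_i$ via Proposition~\ref{prop:basic}(iv)) applied along the geodesic defining $\bar{x}_{k+1}$ yields $f(\bar{x}_{k+1}) - f(x^*) \leq \tfrac{k+1}{k+3}(f(\bar{x}_k) - f(x^*)) + \tfrac{2}{k+3}(f(x_{k+1}) - f(x^*))$. Setting $D_k := (k+1)(k+2)(f(\bar{x}_k) - f(x^*))$ converts this to $D_{k+1} \leq D_k + 2(k+2)(f(x_{k+1}) - f(x^*))$, so telescoping against the previous display yields $D_N \leq 2\sum_{k=0}^N (k+1)(f(x_k) - f(x^*)) \leq \tfrac{2L^2(N+1)}{\mu}$. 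Dividing by $(N+1)(N+2)$ produces the claimed bound $f(\bar{x}_N) - f(x^*) \leq \tfrac{2L^2}{\mu(N+2)}$.
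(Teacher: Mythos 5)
Your overall architecture matches the paper's: strong h-convexity at $x_k$ combined with inequality~\eqref{duo} of Lemma~\ref{lem:horotriangle}, strong g-convexity of the Fr\'echet-mean objective to compare $x'_{k+1}$ with $x^*$, non-expansivity of $\mathcal{P}_C$, and the same weighted telescoping and geodesic-averaging steps (your closing computations with $A_k$ and $D_k$ are correct and reproduce the paper's). However, there is one genuine gap in the middle: you bound the error term by $\frac{1}{m}\sum_i \|g_{ik}\|^2 \leq L^2$, which would require each $f_i$ to be $L$-Lipschitz. The theorem only assumes that the \emph{average} $f$ is $L$-Lipschitz on $C$, which controls only $\|g_k\| = \|\frac{1}{m}\sum_i g_{ik}\| \leq L$; by Jensen's inequality $\|g_k\|^2 \leq \frac{1}{m}\sum_i\|g_{ik}\|^2$, so your quantity is the larger one and is not controlled under the stated hypotheses (the individual $g_{ik}$ can have large, mutually cancelling norms).

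The source of the loss is that you expand $\frac{1}{m}\sum_i Q^{1/s_k}_{x_k,g_{ik}}(x^*)$ term-by-term into $-\frac{s_k}{2}\frac{1}{m}\sum_i\|g_{ik}\|^2 + \frac{1}{2s_k}\frac{1}{m}\sum_i d(y_{ik},x^*)^2$ and then discard the value $\min_x \frac{1}{m}\sum_i d(y_{ik},x)^2 \geq 0$ of the Fr\'echet variance. The paper instead keeps the term $\min_{x\in M}\{\frac{1}{m}\sum_i Q^{1/s_k}_{x_k,g_{ik}}(x)\}$ intact and lower-bounds it in the tangent space at $x_k$: using the $\frac{1}{s_k}$-strong g-convexity of each $Q^{1/s_k}_{x_k,g_{ik}}$ (with value $0$ and gradient $g_{ik}$ at $x_k$) together with Lemma~\ref{lem:rev_toponogov}, one gets $\min_x \varphi(x) \geq \inf_x\{\frac{1}{2s_k}\|\exp^{-1}_{x_k}(x)\|^2 + \langle g_k, \exp^{-1}_{x_k}(x)\rangle\} = -\frac{s_k}{2}\|g_k\|^2 \geq -\frac{L^2 s_k}{2}$, which involves only the norm of the averaged subgradient. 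Replacing your term-by-term expansion with this step (i.e., inequality~\eqref{eq:subgd-c-ineq3} of the paper) closes the gap; alternatively, your argument as written proves the theorem only under the stronger assumption that each $f_i$ is $L$-Lipschitz.
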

			\noindent The proofs of Theorems~\ref{thm:subg-descent-c} and \ref{thm:subg-descent-sc} can be found in Appendices~\ref{app:thm:subg-descent-c} and \ref{app:thm:subg-descent-sc}, respectively. 
			They rely on the \emph{horospherical law of cosines} (Lemma~\ref{lem:horotriangle}).
			
			To find an $\epsilon$-approximate solution $\bar{x}_N$, we used geodesic averaging (i.e., averaging two points iteratively). This technique was used in the prior work \cite{zhang2016first}. See also \cite[Lem.~27]{martinez2024convergence} for a general result on this technique.
			
			\begin{remark}[Larger step sizes in h-convex optimization]
				Algorithms for h-convex optimization admit larger step sizes than their g-convex counterparts, which leads to faster convergence rates. 
				For instance, for Lipschitz g-convex optimization in a ball of radius $r$, step sizes of the form $s_k = \frac{1}{\zeta} \cdot \frac{D}{L \sqrt{N+1}}$
				are used~\cite{zhang2016first}, where 
				$\zeta = \frac{r \sqrt{-K}}{\tanh(r \sqrt{-K})} \sim r \sqrt{-K}$,
				and $K$ is a lower bound on the sectional curvature of $M$.
				In contrast, the analogous $h$-convex setting allows for the larger step size 
				$s_k = \frac{D}{L \sqrt{N+1}}$,
				as shown in Theorem~\ref{thm:subg-descent-c}.
			\end{remark}
			
			\begin{remark}[Approximately solving the subproblems]
				\label{rem:bitcomplexity}
				It is straightforward to extend Theorems~\ref{thm:gd-c},~\ref{thm:gd-sc},~\ref{thm:subg-descent-c}, and~\ref{thm:subg-descent-sc} to the setting where we only approximately solve the subproblems~\eqref{eq:gd} up to some additive accuracy (while maintaining the curvature-independent rates). That is, we allow $x_{k+1}$ to satisfy
				\[
				\frac{1}{m} \sum_{i=1}^{m} Q_{x_k, g_{ik}}^{1/s_k}(x_{k+1}) 
				\leq \delta + \min_{x \in M} \left\{ \frac{1}{m} \sum_{i=1}^{m} Q_{x_k, g_{ik}}^{1/s_k}(x) \right\}.
				\]
				
				However, we emphasize that the optimization of $h$-convex functions is particularly sensitive to noise in the gradients. In particular, the lower bound of Hamilton and Moitra~\cite{hamilton2021nogo} shows that minimizing an $h$-convex function over a ball of radius $r$ in hyperbolic space requires computing each gradient $\nabla f_i$ to precision $O(e^{-r})$, i.e., with $O(r)$ bits of accuracy.
				
			\end{remark}

			\subsection{Accelerated first-order methods using Nesterov's momentum}
			\label{sec:momentum}
			
			In this subsection, we generalize Nesterov's accelerated gradient method \cite{nesterov1983method, nesterov2018lectures}, a first-order method that achieves the optimal oracle complexity up to a constant factor. The convergence guarantees we obtain here are independent of the sectional curvature bound and are exactly the same as the known convergence rate in the Euclidean case ($M=\mathbb{R}^n$). 
			\paragraph{Non-strongly convex case.}
			When $\mu=0$, the accelerated gradient method~\cite{nesterov2018lectures} is given by
			\begin{equation}
				\label{eq:E-agm-c}
				\begin{aligned}
					y_{k} & =x_{k}+\frac{2}{k+1}(z_{k}-x_{k}),\\
					x_{k+1} & =y_{k}-\frac{1}{L}\grad f(y_{k}),\\
					z_{k+1} & =z_{k}-\frac{k+1}{2L}\grad f(y_{k}),
				\end{aligned}
			\end{equation}
			starting at an initial point $x_0=z_0$. To solve the h-convex optimization problem \eqref{eq:problem}, we propose the following generalization of this algorithm:
			\begin{equation}
				\label{eq:agm-c}
				\begin{aligned}
					y_{k} & =\exp_{x_{k}}\left(\frac{2}{k+1}\exp^{-1}_{x_{k}}(z_{k})\right),\\
					x_{k+1} & =\exp_{y_{k}}\left(-\frac{1}{L}\grad f(y_{k})\right),\\
					z_{k+1} & =\argmin_{z\in M}\left\{ \frac{1}{2}d(z,z_{k})^{2}+\frac{k+1}{2L}\frac{1}{m}\sum_{i=1}^{m}B_{y_k,\grad f_{i}(y_{k})}\left(z\right)\right\} .
				\end{aligned}
			\end{equation}
			In particular, when $m=1$, the updating rule for $z_k$ simplifies to 
			\begin{align}\label{eq:agm-c2}
				z_{k+1}=\exp_{z_{k}}\left(-\frac{k+1}{2L}\grad B_{y_{k},\grad f(y_{k})}(z_{k})\right).
			\end{align}
			\begin{theorem}
				\label{thm:agm-c}
				Assume $f$ satisfies the {descent condition}: $f(x^{+})\leq f(x)-\frac{1}{2L}\|\nabla f(x)\|^{2}$, where $x^+=\exp_x(-\frac{1}{L}\nabla f(x))$.
				Then algorithm \eqref{eq:agm-c} satisfies
				\[
				f(x_{N})-f(x^{*})\leq\frac{2L}{N^{2}}d(x_{0},x^{*})^{2}.
				\]
			\end{theorem}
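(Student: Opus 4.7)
The plan is to adapt Tseng's unified analysis of Nesterov's accelerated gradient method to the horospherical setting. Set $\theta_k := \tfrac{2}{k+1}$ and $h_k(x) := \tfrac{1}{m}\sum_{i=1}^{m} B_{y_k,\nabla f_i(y_k)}(x)$, so that \eqref{eq:agm-c} reads $y_k = \exp_{x_k}(\theta_k\,\exp_{x_k}^{-1}(z_k))$, $x_{k+1}=\exp_{y_k}(-\tfrac{1}{L}\nabla f(y_k))$, and $z_{k+1} = \argmin_{z}\{\tfrac{1}{2}d(z,z_k)^2 + \tfrac{1}{\theta_k L}\,h_k(z)\}$. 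The central target is the one-step inequality
\begin{equation*}
f(x_{k+1})-f(x^{*}) \leq (1-\theta_k)(f(x_k)-f(x^{*})) + \tfrac{\theta_k^{2}L}{2}\bigl(d(z_k,x^{*})^{2}-d(z_{k+1},x^{*})^{2}\bigr).
\end{equation*}
Once this is in hand, the standard Tseng telescoping (using $\tfrac{1-\theta_k}{\theta_k^2}\leq\tfrac{1}{\theta_{k-1}^{2}}$, readily verified for $\theta_k=\tfrac{2}{k+1}$) makes $\Psi_k := \tfrac{f(x_k)-f(x^{*})}{\theta_{k-1}^{2}} + \tfrac{L}{2}d(z_k,x^{*})^{2}$ non-increasing for $k\geq 1$. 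A direct application of the one-step inequality at $k=0$ (where $\theta_0=2$ and $z_0=x_0$) yields $\Psi_1 \leq \tfrac{L}{2}d(x_0,x^{*})^{2}$, hence $f(x_N)-f(x^{*}) \leq \theta_{N-1}^{2}\Psi_N \leq \tfrac{2L}{N^{2}}d(x_0,x^{*})^{2}$.

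To prove the one-step inequality, I would first combine the descent assumption $f(x_{k+1}) \leq f(y_k) - \tfrac{1}{2L}\|g\|^{2}$ (where $g:=\nabla f(y_k)$) with h-convexity of each $f_i$, which yields $f(y_k)-f(x_k)\leq -h_k(x_k)$ and $f(y_k)-f(x^{*})\leq -h_k(x^{*})$. Adding $(1-\theta_k)$ times the first to $\theta_k$ times the second gives
\begin{equation*}
f(x_{k+1}) - (1-\theta_k)f(x_k) - \theta_k f(x^{*}) \leq -(1-\theta_k)h_k(x_k) - \theta_k h_k(x^{*}) - \tfrac{1}{2L}\|g\|^{2}.
\end{equation*}
Since $h_k$ is g-convex (an average of g-convex Busemann functions) and $\tfrac{1}{2}d(\cdot,z_k)^{2}$ is $1$-strongly g-convex, the $z_{k+1}$-subproblem is $1$-strongly g-convex; evaluating its strong-convexity inequality at $x^{*}$ yields
\begin{equation*}
-\theta_k h_k(x^{*}) \leq -\theta_k h_k(z_{k+1}) - \tfrac{\theta_k^{2}L}{2}d(z_{k+1},z_k)^{2} + \tfrac{\theta_k^{2}L}{2}\bigl(d(z_k,x^{*})^{2}-d(z_{k+1},x^{*})^{2}\bigr).
\end{equation*}
Summing the two bounds, the one-step inequality reduces to verifying the non-negativity of the residual
\begin{equation*}
E_k := (1-\theta_k)h_k(x_k) + \theta_k h_k(z_{k+1}) + \tfrac{\theta_k^{2}L}{2}d(z_{k+1},z_k)^{2} + \tfrac{1}{2L}\|g\|^{2}.
\end{equation*}

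The non-negativity of $E_k$ is the main obstacle, and the step where the h-convex structure meets Hadamard geometry. G-convexity of $h_k$ at $y_k$ (together with $\nabla h_k(y_k)=g$) bounds $h_k(x_k)\geq\langle g,\exp_{y_k}^{-1}(x_k)\rangle$ and $h_k(z_{k+1})\geq\langle g,\exp_{y_k}^{-1}(z_{k+1})\rangle$. Since $y_k$ sits on the geodesic from $x_k$ to $z_k$ at parameter $\theta_k$, a barycenter identity holds: $(1-\theta_k)\exp_{y_k}^{-1}(x_k) + \theta_k \exp_{y_k}^{-1}(z_k) = 0$. Setting $w := \exp_{y_k}^{-1}(z_{k+1}) - \exp_{y_k}^{-1}(z_k)$ and applying the triangle comparison (Lemma~\ref{lem:rev_toponogov}) $d(z_{k+1},z_k) \geq \|w\|$, one obtains
\begin{equation*}
E_k \geq \theta_k\langle g, w\rangle + \tfrac{\theta_k^{2}L}{2}\|w\|^{2} + \tfrac{1}{2L}\|g\|^{2} = \tfrac{1}{2L}\bigl\|g + \theta_k L\,w\bigr\|^{2} \geq 0,
\end{equation*}
which is exactly the completing-the-square identity from the Euclidean Tseng proof. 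The only role of the Hadamard assumption is to make Lemma~\ref{lem:rev_toponogov}'s reverse triangle inequality go in the favorable direction, so that no curvature-dependent correction is needed.
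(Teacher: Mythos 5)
Your proposal is correct and takes essentially the same route as the paper's proof: your potential $\Psi_k$ is exactly $L\,\mathcal{E}_k$ for the paper's energy $\mathcal{E}_{k}=\frac{1}{2}d(x^{*},z_{k})^{2}+\frac{k^{2}}{4L}(f(x_{k})-f(x^{*}))$, and the ingredients coincide --- h-convexity evaluated at $x^*$, g-convexity of the averaged Busemann function at $y_k$, $1$-strong g-convexity of the $z$-subproblem, the triangle comparison of Lemma~\ref{lem:rev_toponogov}, the descent condition, and a final completion of the square. The only (harmless) difference is organizational: you keep $h_k(z_{k+1})$ and $d(z_{k+1},z_k)^2$ explicit and complete the square at the end, whereas the paper first minimizes the subproblem's quadratic lower bound over $T_{y_k}M$; both reduce to the same computation, and the degenerate case $k=0$ (where $\theta_0=2$ makes $1-\theta_0<0$) is covered in both treatments because $y_0=x_0$ forces every term involving the coefficient $1-\theta_0$ to vanish.
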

			The proof, which can be found in Appendix~\ref{app:thm:agm-c}, consists of showing that the energy function
			$\mathcal{E}_{k}=\frac{1}{2}d(x^{*},z_{k})^{2}+\frac{k^{2}}{4L}(f(x_{k})-f(x^{*}))$
			is non-increasing.
			
			\paragraph{Strongly convex case.}
			When $\mu>0$, the accelerated gradient method~\cite{nesterov2018lectures} is given by
			\begin{equation}
				\label{eq:E-agm-sc}
				\begin{aligned}
					y_{k} & =x_{k}+\frac{\sqrt{\mu/L}}{1+\sqrt{\mu/L}}(z_{k}-x_{k}),\\
					x_{k+1} & =y_{k}-\frac{1}{L}\grad f(y_{k}),\\
					z_{k+1} & =\left(1-\sqrt{\frac{\mu}{L}}\right)z_{k}+\sqrt{\frac{\mu}{L}}\left(y_{k}-\frac{1}{\mu}\grad f(y_{k})\right),
				\end{aligned}
			\end{equation}
			starting at an initial point $x_0=z_0$. To solve the h-convex optimization problem \eqref{eq:problem}, we propose the following generalization of this algorithm:
			\begin{equation}
				\label{eq:agm-sc}
				\begin{aligned}
					y_{k} & =\exp_{x_{k}}\left(\frac{\sqrt{\mu/L}}{1+\sqrt{\mu/L}}\exp^{-1}_{x_{k}}(z_{k})\right)\\
					x_{k+1} & =\exp_{y_{k}}\left(-\frac{1}{L}\grad f(y_{k})\right)\\
					z_{k+1} & =\argmin_{z\in M}\left\{ \left(1-\sqrt{\frac{\mu}{L}}\right)\frac{\mu}{2}d(z,z_{k})^{2}+\sqrt{\frac{\mu}{L}}\frac{1}{m}\sum_{i=1}^{m} Q^{\mu}_{y_k,\nabla f_i(y_k)}(z) \right\} .
				\end{aligned}
			\end{equation}
			Hence, \( z_{k+1} \) is the solution to a \emph{weighted} Fr\'echet mean problem.
			In particular, when $m=1$, the updating rule for $z_k$ simplifies to 
			\begin{align}\label{eq:agm-sc2}
				z_{k+1}=\exp_{z_{k}}\left(\sqrt{\frac{\mu}{L}}\exp^{-1}_{z_{k}}\left(y_{k}^{++}\right)\right), \quad  \textup{where $y_{k}^{++}=\exp_{y_{k}}\left(-\frac{1}{\mu}\grad f(y_{k})\right)$.}
			\end{align}
			\begin{theorem}
				\label{thm:agm-sc}
				Assume $f$ satisfies the {descent condition}: $f(x^{+})\leq f(x)-\frac{1}{2L}\|\nabla f(x)\|^{2}$ for all $x\in M$, where $x^+=\exp_x(-\frac{1}{L}\nabla f(x))$.
				Also assume each $f_i$ is $\mu$-strongly h-convex.
				Then algorithm \eqref{eq:agm-sc} satisfies
				\[
				f(x_{N})-f(x^{*})\leq\left(1-\sqrt{\frac{\mu}{L}}\right)^{N}\left(f(x_{0})-f(x^{*})+\frac{\mu}{2}d(x_{0},x^{*})^{2}\right).
				\]
			\end{theorem}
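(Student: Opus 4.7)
The plan is to mirror the approach used for Theorem~\ref{thm:agm-c}: construct an energy function and show it contracts by a factor $(1 - \sqrt{\mu/L})$ per iteration. Writing $\alpha = \sqrt{\mu/L}$, I define
\[
\mathcal{E}_k \;=\; \bigl(f(x_k) - f(x^*)\bigr) \;+\; \tfrac{\mu}{2}\, d(z_k, x^*)^2.
\]
Since $z_0 = x_0$, proving $\mathcal{E}_{k+1} \leq (1-\alpha)\mathcal{E}_k$ for every $k$ and iterating yields the claimed bound.

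To handle $\mathcal{E}_{k+1} - (1-\alpha)\mathcal{E}_k$, I split it into a function-value part and a distance part. For the function-value part, I combine the assumed descent condition $f(x_{k+1}) \leq f(y_k) - \tfrac{1}{2L}\|\nabla f(y_k)\|^2$ with the $\mu$-strong h-convexity inequality $f_i(x) \geq f_i(y_k) + Q^\mu_{y_k, \nabla f_i(y_k)}(x)$ applied at $x = x_k$ (weighted by $1-\alpha$) and $x = x^*$ (weighted by $\alpha$). For the distance part, note that
\[
h(z) \;:=\; (1-\alpha)\tfrac{\mu}{2}\, d(z, z_k)^2 \;+\; \alpha \cdot \tfrac{1}{m}\sum_{i=1}^m Q^\mu_{y_k, \nabla f_i(y_k)}(z)
\]
is a positive combination of squared distance functions plus a constant, and hence $\mu$-strongly g-convex by Proposition~\ref{prop:basic}~(iv); since $z_{k+1}$ minimizes $h$, strong g-convexity yields $\tfrac{\mu}{2} d(z_{k+1}, x^*)^2 \leq h(x^*) - h(z_{k+1})$. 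Adding the two bounds, the contributions of $B := \tfrac{1}{m}\sum_i Q^\mu_{y_k, \nabla f_i(y_k)}(x^*)$ cancel exactly.

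To close the estimate, I lower bound the remaining $Q^\mu$ averages by strong g-convexity of $Q^\mu_{y_k, v}$ at the anchor $y_k$ (using $Q^\mu_{y_k, v}(y_k) = 0$ and $\nabla Q^\mu_{y_k, v}(y_k) = v$), exploit the collinearity of $x_k, y_k, z_k$ from the definition of $y_k$ to obtain $\exp_{y_k}^{-1}(x_k) = -\alpha \exp_{y_k}^{-1}(z_k)$, and invoke the triangle comparison (Lemma~\ref{lem:rev_toponogov}) on the triangle $(y_k, z_k, z_{k+1})$ to lower bound $d(z_{k+1}, z_k)^2 \geq \|u - v\|^2$, where $u := \exp_{y_k}^{-1}(z_k)$ and $v := \exp_{y_k}^{-1}(z_{k+1})$. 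Everything is then expressed in terms of inner products and norms of $u$, $v$, and $g := \nabla f(y_k)$ in the Euclidean space $T_{y_k}M$. Completing the square in $v$ and using the identity $\alpha^2 = \mu/L$ to cancel the $\|g\|^2$ terms, I expect the residual to take the manifestly non-positive form
\[
-\tfrac{\alpha(1-\alpha)(1+\alpha)\mu}{2}\,\|u\|^2 \;-\; \tfrac{\mu}{2}\,\bigl\|v - (1-\alpha)u + \tfrac{\alpha}{\mu}\,g\bigr\|^2 \;\leq\; 0.
\]

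The main obstacle I anticipate is precisely this final algebraic step. In Euclidean space, the identity $(1-\alpha)(z_k - y_k) - (z_{k+1} - y_k) = \tfrac{\alpha}{\mu}\nabla f(y_k)$ makes the cancellation transparent; on a manifold no such affine identity is available, and we cannot directly manipulate $z_{k+1}$ since it is only implicitly defined as the minimizer of a weighted Fr\'echet-mean problem. The triangle comparison of Lemma~\ref{lem:rev_toponogov} is what substitutes for the missing identity, and the fact that its inequality points in the favorable direction --- lower-bounding $d(z_{k+1}, z_k)^2$, which enters our estimate with a negative sign --- is precisely why the curvature-independent Euclidean rate carries over to Hadamard manifolds.
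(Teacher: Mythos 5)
Your proposal matches the paper's proof in all essentials: the same Lyapunov function $\mathcal{E}_k=f(x_k)-f(x^*)+\frac{\mu}{2}d(z_k,x^*)^2$ (the paper merely carries the factor $(1-\sqrt{\mu/L})^{-k}$ explicitly), the same use of $\mu$-strong h-convexity at $x^*$, optimality plus $\mu$-strong g-convexity of the weighted Fr\'echet-mean subproblem, Lemma~\ref{lem:rev_toponogov} to pass to $T_{y_k}M$, and the collinearity relation $\exp^{-1}_{y_k}(x_k)=-\sqrt{\mu/L}\,\exp^{-1}_{y_k}(z_k)$. The only (harmless) deviation is that you invoke strong h-convexity at $x_k$ where the paper uses plain g-convexity of $f$, which is exactly why your residual carries the extra factor $(1+\alpha)$ on the $\|u\|^2$ term; both residuals are non-positive, so the argument closes either way.
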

			The proof, which can be found in Appendix~\ref{app:thm:agm-sc}, consists of showing that the energy function
			$\mathcal{E}_{k}=\left(1-\sqrt{\frac{\mu}{L}}\right)^{-k}\left(f(x_{k})-f(x^{*})+\frac{\mu}{2}d(z_{k},x^{*})^{2}\right)$
			is non-increasing.
			\begin{remark}
				We observe that Nesterov's acceleration scheme for $m=1$ (equations~\eqref{eq:agm-sc} and~\eqref{eq:agm-sc2}) is best interpreted as a ``four-point'' algorithm, involving the points $y_k$, $x_{k+1} = y_k^+$, $y_k^{++}$, and $z_{k+1}$. 
				The non-strongly h-convex variant (equations~\eqref{eq:agm-c} and~\eqref{eq:agm-c2}) can also be viewed as a four-point method, with the distinction that one of the points lies on the boundary at infinity.
			\end{remark}

			\section{Much faster rates in hyperbolic space}\label{sec:faster_rates}
			
			In this section we show that if $M$ has sufficiently negative curvature, then subgradient descent for minimizing an h-convex function (i.e., the problem~\eqref{eq:problem} with $m=1$) converges significantly \emph{faster} than in Euclidean space.
			We also give a curvature-independent ``ellipsoid'' method on hyperbolic space.
			For simplicity we focus on hyperbolic space $M = \mathbb{H}^n$ with constant sectional curvature $-1$, although we expect that most results extend to Hadamard manifolds with upper curvature bound.
			
			As a warm up, we start with the following proposition, which follows from a simple geometric analysis of the subgradient method (and in particular is a consequence of bounding a sublevel set of $f$ by a well-chosen supporting half-space\footnote{By half-space, we mean $\{x\in M:\langle v,\exp^{-1}_y(x)\rangle \leq 0\} \subseteq M$ for $(y, v) \in T M$.}).
			For simplicity, we consider optimization over closed balls.
			The proof of the following result can be found in Appendix~\ref{app:faster-hyper2}.
			\begin{proposition}\label{prop:thisone}
				Let $f$ be h-convex and $L$-Lipschitz on $\mathbb{H}^n$, and let $f^* = \min_{x \in \bar{B}(p, r)} f(x)$.
				Let $\delta \in (0,r)$ ($\delta$ has the same units as $r$). 
				Then the subgradient method
				$$x_{k+1} = \exp_{x_k}(- s_k g_k), \quad{g_k\in \ph f(x_k)}, \quad s_k = \delta \frac{g_k}{\|g_k\|}, \quad x_0 = p,$$
				satisfies
				$\min_{k = 0, 1, \ldots, N} \{f(x_k) - f^*\} \leq L \delta$
				provided $N \geq \frac{\log \cosh(r)}{\log \cosh(\delta)}$.\footnote{Note that $\frac{\log \cosh(r)}{\log \cosh(\delta)} \leq \frac{r^2}{\delta^2}$, since $r\mapsto \frac{\log \cosh(r)}{r^2}$ is decreasing.}
				In particular, when $\delta \leq 1$, it is enough to assume $N \geq \frac{4}{r} \cdot \frac{r^2}{\delta^2}$.
			\end{proposition}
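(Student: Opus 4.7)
The plan is to argue by contrapositive: assume $f(x_k) - f^* > L\delta$ for every $k \in \{0, 1, \ldots, N\}$ and derive a contradiction when $N \geq \log \cosh(r)/\log \cosh(\delta)$. The first step is to convert each inequality $f(x_k) > f^* + L\delta$ into a geometric statement about $x^*$. Using the h-subgradient inequality $f(x^*) - f(x_k) \geq B_{x_k, g_k}(x^*)$, the homogeneity $B_{x_k, g_k} = \|g_k\|\, B_{x_k, g_k/\|g_k\|}$, and the $L$-Lipschitz bound $\|g_k\| \leq L$ (which follows from Proposition~\ref{prop:basic}~(iii)), one readily deduces
\[
B_{x_k, g_k/\|g_k\|}(x^*) < -\delta = B_{x_k, g_k/\|g_k\|}(x_{k+1}).
\]
Geometrically, $x^*$ lies strictly inside the horoball $\mathcal{H}_k$ whose bounding horosphere passes through $x_{k+1}$, centered at the point at infinity $\xi_k \in \mathbb{H}^n(\infty)$ toward which $x_{k+1}$ moves from $x_k$.

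The heart of the argument is to exploit hyperbolic trigonometry to upgrade this containment into a multiplicative decay of the potential $\cosh(d(x_k, x^*))$. Let $q_k$ denote the foot of the perpendicular from $x^*$ onto the geodesic line $\gamma_k$ through $x_k$, $x_{k+1}$, and $\xi_k$. I would first verify that $q_k$ lies on the $\xi_k$-side of $x_{k+1}$, so that $d(x_k, q_k) = \delta + d(x_{k+1}, q_k)$. The cleanest way to check this is in the upper half-space model of $\mathbb{H}^n$, after moving $\xi_k$ to $\infty$ via an isometry: in this model $\gamma_k$ becomes a vertical axis, $\mathcal{H}_k$ becomes a horizontal slab $\{y \geq y_0 e^\delta\}$, and the foot of the hyperbolic perpendicular from any point $(a, b)$ with $b \geq y_0 e^\delta$ onto the axis is $(0, \sqrt{a^2 + b^2})$, whose height exceeds $b \geq y_0 e^\delta$.

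With this geometric claim in hand, the hyperbolic Pythagorean theorem applied to the two right triangles $\triangle x_k q_k x^*$ and $\triangle x_{k+1} q_k x^*$ yields
\[
\cosh(d(x_k, x^*)) = \cosh(\delta + t)\cosh(u), \qquad \cosh(d(x_{k+1}, x^*)) = \cosh(t)\cosh(u),
\]
where $t = d(x_{k+1}, q_k)$ and $u = d(q_k, x^*)$. Dividing and using the elementary bound $\cosh(\delta + t) = \cosh(\delta)\cosh(t) + \sinh(\delta)\sinh(t) \geq \cosh(\delta)\cosh(t)$ produces the one-step contraction
\[
\cosh(d(x_{k+1}, x^*)) \;\leq\; \cosh(d(x_k, x^*))/\cosh(\delta).
\]
Iterating from $k = 0$ to $k = N-1$ gives $\cosh(d(x_N, x^*)) \leq \cosh(r)/\cosh(\delta)^N$, and the hypothesis $N \geq \log\cosh(r)/\log\cosh(\delta)$ then forces $\cosh(d(x_N, x^*)) \leq 1$, hence $x_N = x^*$ and $f(x_N) = f^*$, contradicting $f(x_N) > f^* + L\delta$. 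The one delicate step is verifying the foot-of-perpendicular claim, which reducing to the upper half-space model renders essentially a one-line calculation; the remaining bookkeeping (including tracking strict vs.\ non-strict inequalities to close the contradiction at equality of $N$) is routine.
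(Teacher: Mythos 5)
Your proof is correct, and it follows the same overall strategy as the paper: argue by contrapositive, use h-convexity plus the Lipschitz bound $\|g_k\|\leq L$ to place $x^*$ in the horoball through $x_{k+1}$ centered at $\xi_k$, and establish the one-step contraction $\cosh(d(x_{k+1},x^*))\leq \cosh(d(x_k,x^*))/\cosh(\delta)$, which is then iterated. Where you differ is in how the contraction is extracted from the horoball containment. The paper first relaxes the horoball to its supporting half-space $H_k$ at $x_{k+1}$, picks a point $q\in\bd(B_k)\cap\bd(H_k)$ where $B_k=\bar{B}(x_k,d(x_k,x^*))$, applies the hyperbolic Pythagorean theorem to the right triangle $q\,x_k\,x_{k+1}$ (right angle at $x_{k+1}$) to compute a radius $r_{k+1}$ with $\cosh(r_{k+1})=\cosh(d(x_k,x^*))/\cosh(\delta)$, and then argues the set containment $B_k\cap H_k\subseteq\bar{B}(x_{k+1},r_{k+1})$ (justified by a picture in the Poincar\'e model). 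You instead drop the perpendicular from $x^*$ onto the axis $x_k\xi_k$, verify in the upper half-space model that its foot lies beyond $x_{k+1}$, and apply the Pythagorean theorem to the two right triangles sharing the leg $q_k x^*$; dividing the two identities gives the contraction directly. Your route buys a fully computational derivation that uses the horoball containment exactly once and avoids both the half-space relaxation and the unproved set-containment step, at the cost of one model-dependent verification (the foot-of-perpendicular claim), which you correctly reduce to a one-line calculation. The only piece you leave implicit is the final numerical estimate that $N\geq \frac{4}{r}\cdot\frac{r^2}{\delta^2}$ suffices when $\delta\leq 1$; this follows, as in the paper, from $\log\cosh(r)\leq r$ and $\log\cosh(\delta)\geq \delta^2/4$ for $\delta\leq 1$, and is indeed routine.
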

			Compare this with the same setting in Euclidean space, where $\Theta(\frac{r^2}{\delta^2})$ iterations of subgradient descent are necessary (and this is tight)~\cite[\S7]{nemirovskibook}.
			We see that the h-convex rate is faster than the convex rate by at least a factor $r$.
			For g-convex optimization, $\Theta(\frac{r^3}{\delta^2})$ iterations are necessary when $r \geq \Omega(1)$, and this is tight~\cite[Thm.~7, Thm.~8, Cor.~14]{criscitiello2023curvature}. Therefore h-convex optimization is faster by at least a factor $r^2$.
			
			When $r \geq \Omega(1)$, the previous proposition is far from tight, and we can do much better by bounding the sublevel sets of $f$ with a supporting horoball instead of a supporting half-space.
			The following lemma states that only $\log(r)$ iterations of subgradient descent are necessary to localize the set of minimizers $\argmin_{x \in \bar{B}(p, r)} f(x)$ to a ball of radius $O(1)$.
			Contrast this with the Euclidean case, where one needs strong convexity to localize the set of minimizers in a similar manner.
			\begin{proposition}\label{prop:anotherone}
				Let $f$ be h-convex and $L$-Lipschitz on $\mathbb{H}^n$, and let $f^* = \min_{x \in \bar{B}(p, r)} f(x)$, with $r \geq 4$.
				Define the subgradient method
				$$x_{k+1} = \exp_{x_k}(- s_k g_k), \quad g_k\in \ph f(x_k), \quad s_k = \frac{r e^{-k/4}}{2} \frac{g_k}{\|g_k\|}, \quad x_0 = p.$$
				Setting $N =\lceil4\log(r/4)\rceil$, we obtain 
				$\arg\min_{x\in \bar{B}(p, r)}f(x) \subseteq \bar{B}(x_N, 4)$.
			\end{proposition}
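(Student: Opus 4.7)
The strategy is to replace the supporting half-space used in Proposition~\ref{prop:thisone} with a supporting \emph{horoball}, which in hyperbolic space is dramatically thinner and therefore yields a much tighter localization per step. The plan is to translate the h-subgradient inequality at $x_k$ into a horoball containment of any minimizer $x^*$, quantify how this horoball collapses towards its axis as one moves into it, and iterate.

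\emph{Step 1 (horoball containment).} Fix any $x^* \in \arg\min_{x \in \bar{B}(p,r)} f$; if $g_k = 0$ then $x_k$ is itself a minimizer. Otherwise, the h-subgradient inequality at $x_k$ yields
$$B_{x_k, g_k}(x^*) \;\leq\; f(x^*) - f(x_k) \;\leq\; 0,$$
so $x^*$ lies in the closed horoball $H_k := \{x : B_{x_k, g_k}(x) \leq 0\}$, whose boundary horosphere passes through $x_k$. By the gradient-flow description of Busemann functions recalled in Section~\ref{sec:pre3}, the iterate $x_{k+1} = \exp_{x_k}(-\tilde{s}_k\, g_k/\|g_k\|)$, with $\tilde{s}_k := r e^{-k/4}/2$, lies at distance $\tilde{s}_k$ from $x_k$ along the geodesic ray from $x_k$ to the center at infinity $\xi_k$ of $H_k$.

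\emph{Step 2 (key hyperbolic estimate).} We will show that for every $y \in H_k$,
$$\cosh d(x_{k+1}, y) \;\leq\; e^{-\tilde{s}_k}\cosh d(x_k, y) + \sinh \tilde{s}_k.$$
To prove this, apply the isometry of $\mathbb{H}^n$ sending $\xi_k$ to the point at infinity of the upper half-space model and $x_k$ to $(0,\ldots,0,1)$; then $x_{k+1} = (0,\ldots,0,e^{\tilde{s}_k})$ and $H_k = \{z \geq 1\}$. Writing $y = (u,z)$ and substituting into $\cosh d((u,z),(u',z')) = 1 + \frac{\|u-u'\|^2 + (z-z')^2}{2zz'}$, a short algebraic simplification yields the exact identity
$$\cosh d(x_{k+1}, y) \;=\; e^{-\tilde{s}_k}\cosh d(x_k, y) + \frac{\sinh \tilde{s}_k}{z},$$
and the claimed inequality follows from $z \geq 1$, which is precisely the condition $y \in H_k$.

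\emph{Step 3 (iterating the recursion).} Set $R_k := d(x_k, x^*)$, so $R_0 \leq r$. Applying Step 2 with $y = x^*$ gives $\cosh R_{k+1} \leq e^{-\tilde{s}_k}\cosh R_k + \sinh \tilde{s}_k$. The bounds $\cosh R \leq e^R$ and $\sinh t \leq e^t/2$ yield $e^{R_{k+1}} \leq e^{R_k - \tilde{s}_k} + e^{\tilde{s}_k}$, and hence
$$R_{k+1} \;\leq\; \max\bigl(R_k - \tilde{s}_k,\;\tilde{s}_k\bigr) + \log 2.$$
A brief induction, using the hypothesis $r \geq 4$ to ensure $r e^{-k/4} \geq 4$ for all $k \leq N-1$, shows that the regime $\tilde{s}_k \geq R_k/2$ is maintained throughout, whence $R_{k+1} \leq \tilde{s}_k + \log 2$. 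Setting $k = N-1$ with $N = \lceil 4\log(r/4)\rceil$, the estimate $e^{-(N-1)/4} \leq 4 e^{1/4}/r$ gives $R_N \leq 2 e^{1/4} + \log 2 < 4$, which is the desired conclusion. The main obstacle is obtaining the sharp identity in Step 2; once it is in hand, Steps 1 and 3 are routine bookkeeping.
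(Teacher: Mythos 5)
Your proposal is correct, and while it shares the paper's overall skeleton --- an induction maintaining $d(x_k,x^*)\le r_k:=re^{-k/4}$, driven by the supporting horoball $H_k=\{x: B_{x_k,g_k}(x)\le 0\}$ that localizes $x^*$ --- the key geometric estimate is obtained by a genuinely different route. The paper introduces a \emph{second} horoball $H_k'$ (pointing in the direction $+g_k$ and containing $\bar{B}(x_k,r_k)$) and bounds the lens $H_k\cap H_k'$ by the ball $\bar{B}(x_{k+1},\arccosh(e^{r_k/2}))$ via a Euclidean computation in the Poincar\'e disk model, yielding $d(x_{k+1},x^*)\le \frac{r_k}{2}+1$. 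You instead keep only $H_k$ together with the scalar datum $d(x_k,x^*)\le r_k$, and prove the exact identity $\cosh d(x_{k+1},y)=e^{-\tilde s_k}\cosh d(x_k,y)+z^{-1}\sinh\tilde s_k$ in the upper half-space model (the algebra checks out, and $z\ge 1$ is precisely membership in $H_k$). This is arguably cleaner: it quantifies exactly how moving distance $\tilde s_k$ toward the horoball's center at infinity contracts distances to \emph{every} point of the horoball, and it sidesteps the paper's two-horoball intersection picture. The price is a slightly worse additive constant in the recursion, which still closes under $r_k\ge 4$.

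One arithmetic slip in Step 3: from $\cosh R_{k+1}\le e^{R_k-\tilde s_k}+\tfrac{1}{2}e^{\tilde s_k}$ and $e^{R_{k+1}}\le 2\cosh R_{k+1}$ you get $e^{R_{k+1}}\le 2e^{R_k-\tilde s_k}+e^{\tilde s_k}$, not $e^{R_k-\tilde s_k}+e^{\tilde s_k}$; the additive constant should therefore be $\log 3$, not $\log 2$. This is harmless: in the regime $\tilde s_k\ge R_k/2$ one gets $R_{k+1}\le \tilde s_k+\log 3=\frac{r_k}{2}+\log 3\le e^{-1/4}r_k$ whenever $r_k\ge \log 3/(e^{-1/4}-\tfrac{1}{2})\approx 3.94$, which holds since $r_k>4$ for all $k\le N-1$, and the terminal bound becomes $R_N\le 2e^{1/4}+\log 3<4$. (You also implicitly use $f(x_k)\ge f^*$ in Step 1 even though the iterates may leave $\bar{B}(p,r)$; the paper's own proof makes the same implicit assumption, so this is a shared rather than a new omission.)
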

			The idea behind the proof of Proposition~\ref{prop:anotherone} (see Appendix~\ref{app:faster-hyper3}) is as follows. 
			We begin with the assumption that the minimizers of $f$ lie within the radius-$r$ ball $B(x_0, r)$. 
			A subgradient $g_0 \in T_{x_0} M$ implies that the minimizers are contained in the intersection of $B(x_0, r)$ and a horoball whose boundary contains $x_0$.
			Using the specific geometry of hyperbolic space, we can bound that intersection by a smaller ball centered at $x_1 = \exp_{x_0}(\frac{r}{2} \frac{g_0}{\|g_0\|})$ with radius $r' = 1 + \frac{r}{2}$, which is less than $r e^{-1/4}$ if $r \geq 4$.
			We then repeat the process iteratively.
			In contrast, for the corresponding setup in Euclidean space, the best we can do is take $r' = r$. 
			\paragraph{An ``ellipsoid method'':}
			In~\cite{criscitiello2023open}, the authors show that in the Beltrami-Klein model of hyperbolic space, g-convex functions on $\mathbb{H}^n$ become quasi-convex in the Euclidean sense.  
			Using this, they are able to reduce Lipschitz g-convex optimization in a ball of radius $O(1)$ to Lipschitz quasi-convex Euclidean optimization.  The latter such problems can be solved by any cutting plane method, like the ellipsoid method.
			Pairing this observation with Proposition~\ref{prop:anotherone}, we obtain a curvature-independent algorithm for minimizing $f \colon \mathbb{H}^n \to \reals$ h-convex and Lipschitz.
			
			Specifically, the algorithm begins by performing $N = \lceil4\log(r/4)\rceil$ subgradient steps as in Proposition~\ref{prop:anotherone}, thereby localizing the set of minimizers to a constant-sized ball \( \bar{B}(x_N, 4) \).
			Let \( \Phi \colon B(0,1) \to \mathbb{H}^n \) denote the diffeomorphism given by the Poincaré ball model\footnote{For g-convex functions, the Beltrami–Klein model is required. For h-convex functions, either the Beltrami–Klein or Poincaré ball models suffice; we adopt the latter for consistency.} which maps the origin to $x_N$, (i.e., $\Phi(0) = x_N $). 
			We then apply the \emph{Euclidean} ellipsoid method to the function \( y \mapsto (f \circ \Phi)(y) \), subject to the constraint \( y \in \Phi^{-1}(B(x_N, 4))\), which is just a Euclidean ball.
			The Euclidean ellipsoid method makes $O(d^2 \log(\frac{1}{\delta}))$ subgradient queries to find a point $y$ such that $(f \circ \Phi)(y) - f^* \leq L \delta$~\cite[Ch.~3]{nemirovskibook}.
			
			To ensure the correctness of the ellipsoid method, we require that \( f \circ \Phi\) is quasi-convex and $O(L)$-Lipschitz, both in the Euclidean sense. Indeed:
			\begin{itemize}
				\item The function \( f \circ \Phi \) is Euclidean quasi-convex. To show this, let $C$ be a sublevel set of \( f \circ \Phi \). Then, $\Phi(C)$ is a sublevel set of $f$. This set is h-convex, meaning it is an intersection of horoballs. Since the inverse images of these horoballs under $\Phi$ are Euclidean balls in the Poincaré ball model, the set $C$ is an intersection of Euclidean balls, making it convex in the Euclidean sense.
				
				\item While \( f \circ \Phi \) may not be globally Lipschitz over the entire Euclidean unit ball due to distortion from \( \Phi \), it is \( O(L) \)-Lipschitz over the constant-sized region \( \Phi^{-1}(B(x_N, 4)) \). This follows from {the fact that the norm of the differential of $\Phi$ is bounded by a constant on that region.} See also~\cite{martinez2022global} for controlling distortion in the Beltrami–Klein model. 
				
			\end{itemize}
			The complexity of the algorithm is as follows:
			\begin{proposition}\label{prop:ellipsoidHn}
				Let $f$ be h-convex and $L$-Lipschitz on $\mathbb{H}^n$, and let $f^* = \min_{x \in \bar{B}(p, r)} f(x)$.
				For $\delta \in (0, 1)$, there is an algorithm which finds $x \in M$ satisfying $f(x) - f^* \leq L \delta$ using at most
				$$O\Big(\log(r) + d^2 \log\Big(\frac{1}{\delta}\Big)\Big)$$
				subgradient queries.
			\end{proposition}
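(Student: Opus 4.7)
The plan is to combine Proposition~\ref{prop:anotherone} (to cheaply shrink the feasible region to a constant radius) with the Euclidean ellipsoid method applied to the pullback of $f$ under the Poincar\'e ball chart, as already outlined in the text immediately preceding the statement. I first apply the subgradient procedure from Proposition~\ref{prop:anotherone} for $N = \lceil 4\log(r/4)\rceil$ iterations, which costs $O(\log r)$ subgradient queries and produces $x_N \in \mathbb{H}^n$ with $\arg\min_{x \in \bar{B}(p,r)} f(x) \subseteq \bar{B}(x_N, 4)$. (If $r \leq 4$, the first phase can be skipped and we set $x_N = p$.)

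Next, I introduce the Poincar\'e ball diffeomorphism $\Phi : B(0,1) \to \mathbb{H}^n$ normalized so that $\Phi(0) = x_N$, and consider the pulled-back function $\tilde f = f \circ \Phi$ restricted to the Euclidean ball $U = \Phi^{-1}(\bar{B}(x_N, 4))$. Two facts must be verified so that the Euclidean ellipsoid method applies to $\tilde f$ on $U$. First, $\tilde f$ is Euclidean quasi-convex on $U$: each sublevel set of $f$ is h-convex by Theorem~\ref{thm:envelope_rep_h-convex}, hence an intersection of horoballs, and in the Poincar\'e ball model horoballs pull back to Euclidean balls, so each sublevel set of $\tilde f$ is an intersection of Euclidean balls, hence Euclidean convex. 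Second, $\tilde f$ is $O(L)$-Lipschitz on $U$: since $\Phi$ is a smooth diffeomorphism and $U$ is contained in a fixed compact subset of $B(0,1)$ (because $\bar{B}(x_N, 4)$ has bounded hyperbolic diameter and $\Phi(0) = x_N$), the operator norm of $d\Phi$ is bounded on $U$ by a universal constant, and composing with the $L$-Lipschitz $f$ gives the claim. The set $U$ itself is a Euclidean ball, which is a valid initial set for the ellipsoid method.

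Given these two properties, the Euclidean ellipsoid method~\cite[Ch.~3]{nemirovskibook} finds, using $O(d^2 \log(1/\delta))$ subgradient queries of $\tilde f$, a point $y \in U$ with $\tilde f(y) - \min_{y' \in U} \tilde f(y') \leq L \delta$. A subgradient of $\tilde f$ at $y$ is produced from a horospherical subgradient of $f$ at $\Phi(y)$ via the chain rule (a single query to the $f$-subgradient oracle), so each ellipsoid iteration costs one oracle call. Setting $x = \Phi(y)$, we have $f(x) - f^* \leq L \delta$ since the localization step of Proposition~\ref{prop:anotherone} guarantees $\min_{y' \in U} \tilde f(y') = f^*$. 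Adding the two phases gives the announced bound $O(\log r + d^2 \log(1/\delta))$.

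The main subtlety I expect is controlling the Lipschitz constant of $\tilde f$: one must ensure that the distortion of $\Phi$ on $U$ is bounded by an absolute constant, independent of $r$, which is the reason the first phase is essential (it reduces the problem to a ball of radius $4$ rather than $r$). A second minor point is formally verifying that the Euclidean ellipsoid method works under Euclidean \emph{quasi}-convexity and a bounded Lipschitz constant on the constraint set, but this is standard (cutting-plane methods only require that sublevel sets are convex and that a separating subgradient is available).
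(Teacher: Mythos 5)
Your proposal is correct and follows essentially the same route as the paper: first localize the minimizers to $\bar{B}(x_N,4)$ via Proposition~\ref{prop:anotherone} in $O(\log r)$ queries, then run the Euclidean ellipsoid method on $f\circ\Phi$ in the Poincar\'e ball model centered at $x_N$, justified by the same two observations (sublevel sets pull back to intersections of Euclidean balls, and the differential of $\Phi$ is boundedly distorted on the constant-radius region, giving $O(L)$-Lipschitzness). The extra details you supply --- the $r\leq 4$ edge case and the chain-rule conversion of subgradients --- are harmless refinements of the paper's argument.
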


			In contrast, the ellipsoid method for g-convex optimization given by~\cite{criscitiello2023open} requires~$O(r d^2 \log^2(\frac{r}{\delta}))$ subgradient queries.
			We are able to avoid the extra factor of $r$ in Proposition~\ref{prop:ellipsoidHn} because Proposition~\ref{prop:anotherone} allows us to quickly localize the minimizing set.

			\section{Perspectives}
			We conclude with a list of directions for future research.
			\begin{itemize}
				\item \textbf{H-convexity for scaling problems}:
				The examples from Section~\ref{sec:prob_example} indicate that \emph{some} scaling problems (e.g., Tyler's M-estimator, Horn's problem) can be formulated as the minimization of a sum (or an integral) of h-convex functions.  
				Does tensor scaling and related problems fall into this category?  
				Even if so, it is not immediately clear how to leverage the algorithms we have given to obtain polynomial-time algorithms, for three reasons.
				First, we assume access to an oracle (e.g., solving the subproblem~\eqref{eq:gd}) which may be too strong --- the splitting approach of Goodwin et al.~\cite{goodwin2024subgradient} provides an encouraging direction.
				Second, we are not concerned with bit complexity, e.g., we assume exact evaluation of the exponential map: see Remark~\ref{rem:bitcomplexity}.
				Third, obtaining polynomial-time algorithms for these problems requires solving the associated optimization problem to very high accuracy~\cite{burgissernoncommutativeoptimization}. 
				This necessitates algorithms that (i) achieve curvature-independent convergence rates, and (ii) exhibit linear convergence in the absence of strong convexity. 
				This motivates the next item.
				
				\item \textbf{Methods with linear convergence}: 
				We focus on dimension-independent first-order methods. 
				However, in the absence of strong convexity, such methods typically achieve only sublinear convergence rates. 
				In contrast, in the Euclidean setting, there exist dimension-dependent algorithms --- such as the ellipsoid method or interior-point methods --- that attain linear convergence even without strong convexity. 
				Can one design curvature-independent algorithms for (non-strongly) h-convex optimization that achieve linear convergence rates?
				In Section~\ref{sec:faster_rates}, we sketched how to do this for hyperbolic space and when $m=1$. 
				However, it remains unclear how to extend this approach to more general Hadamard manifolds or $m > 1$. 
				
				\item \textbf{Translation between convex and h-convex}: 
				Extending algorithms for convex optimization to g-convex optimization can be tricky, sometimes requiring a fair amount of effort and ingenuity.\footnote{The line of research on g-convex accelerated methods is a prime example of this. See \cite{zhang2018estimate,ahn2020nesterov,lin2020accelerated,alimisis2021momentum,jin2022understanding,kim2022accelerated,martinez2022global,han2023riemannian,martinez2023accelerated}.}
				In contrast, all the convex algorithms we considered (even Nesterov acceleration), generalizes relatively easily to the h-convex setting.  Why?
				Is there an underlying correspondence/algorithm which, given a convex algorithm, provides the corresponding h-convex algorithm, with the identical complexity guarantees?
				Perhaps there is some connection to the performance estimation problem (PEP) framework~\cite{drori2014performance} in convex optimization?
				
				\item \textbf{Inner characterization}: When $M=\mathbb{H}^n$, there is an inner characterization of h-convex sets (see the third item of the list in Setion~\ref{sec:h-geometry}).  
				Is there an inner characterization of h-convex sets or functions on general Hadamard manifolds?
			\end{itemize}
			
			\section*{Acknowledgements}
			We are grateful to Michael Walter for insightful discussions on Horn's problem.  
			We also thank Nicolas Boumal for many valuable conversations and his generous guidance throughout this work.  
			CC was supported by the Swiss State Secretariat for Education, Research and Innovation (SERI) under contract number MB22.00027.
			
			\bibliographystyle{plain}
			\bibliography{sample}
			
			\appendix
			
			\allowdisplaybreaks[1]
			
			\section{Busemann functions: Details for Section~\ref{sec:pre}}
			\label{app:prelim}
			
			\subsection{Cone at infinity $CM(\infty)$ as dual space of $M$}
			\label{app:dual_space}
			
			The idea of considering $CM(\infty)$ as the dual space of $M$ can be found in \cite{hirai2023convex}. Our approach differs from their work only in the choice of topology on the space $CM(\infty)$.
			
			For a fixed point $p \in M$, the space $M(\infty)$ can be considered as the set of unit-scale Busemann functions $B_{p,\xi}=B_{p,v}$ by identifying each $B_{p,\xi}=B_{p,v}$ with $\xi=\exp_{p}(-\infty v) \in M(\infty)$. In particular, when $M=\mathbb{R}^n$, the topological space $M(\infty)$ can be identified with the unit sphere in the dual space $(\mathbb{R}^n)^*$. 
			
			\paragraph{Constructing $CM(\infty)$.}
			The cone at infinity $CM(\infty)$ is defined as the quotient space
			\[
			CM(\infty)=\left(M(\infty)\times[0,\infty)\right)/\left(M(\infty)\times\{0\}\right),
			\]
			where $M(\infty)$ is endowed with the sphere topology. Note that the resulting topological structure on $CM(\infty)$ differs from those induced by the angular metric or the Tits metric, which can be found in \cite[\S II.4]{ballmann2012lectures} and \cite[\S II.9]{bridson2013metric}. Since $M(\infty)$ is homeomorphic to $\mathbb{S}^{n-1}$, $CM(\infty)$ is homeomorphic to $\mathbb{R}^{n}$. Precisely, the map $T_{p}M\to M(\infty)\times[0,\infty)\to CM(\infty)$, obtained by composing the map $v\mapsto\left(\exp_{p}(-\infty v),\|v\|\right)$ and the quotient map, is a homeomorphism.

			\paragraph{Scaled Busemann functions as dual elements.}
			We consider non-negative scalar multiples of Busemann functions. For $c\geq 0$, $p\in M$, and $v\in T_p M$, we denote the non-negatively scaled Busemann function $cB_{p,v}$ by $B_{p,cv}$. For a fixed point $p \in M$, the space $CM(\infty)$ can be seen as the set of $B_{p,cv}$. Here, each $B_{p,cv}$ with $c\geq 0$ and $\|v\|=1$ corresponds to the equivalence class of $(\xi,c) \in M(\infty) \times [0,\infty)$, where $\xi=\exp_{p}(-\infty v)$. We also use the notion $B_{p,\omega}$ to denote $cB_{p,\xi}$, where $\xi\in M(\infty)$ and $\omega=[(\xi,c)]\in CM(\infty)$.
			
			\paragraph{Angles in $M(\infty)$.}
			Given $x \in M$ and $\xi_1, \xi_2 \in M(\infty)$, the angle at $x$ between the geodesic rays from $x$ to $\xi_1$ and $\xi_2$ is denoted by $\angle_x(\xi_1, \xi_2)$. The angle $\angle (\xi_1,\xi_2)$ between $\xi_1$ and $\xi_2$ in $M(\infty)$ is defined as $\sup_{x\in M}\angle_{x}(\xi_{1},\xi_{2})$.

			\begin{remark}
				To justify why we use the sphere topology on $M(\infty)$ instead of the topologies induced by the angular metric or the Tits metric, consider the following: In hyperbolic spaces $\mathbb{H}^n$, the angle between any two non-asymptotic geodesic rays is $\pi$ \cite[Exam.~II.9.6]{bridson2013metric}. Consequently, the topology on $M(\infty)$ induced by the angular metric or the Tits metric is discrete. This obstructs the generalization of many theories in convex geometry and convex analysis. For instance, the proof of Theorem~\ref{thm:supporting_horosphere} in Appendix~\ref{app:first-order-as-supporting} does not hold in this setting since $M(\infty)$ is not compact. 
			\end{remark}
			We recall that the map $(x, \xi, y) \mapsto B_{x, \xi}(y)$ is continuous on $M \times M(\infty) \times M$ by the construction of the sphere topology (standard topology) on $M(\infty)$ (see \cite[\S II.1]{ballmann2012lectures}). The following lemma generalizes this result into the scaled case.
			\begin{lemma}
				\label{lem:B-is-conti}
				The map $\beta:(x,\omega,y)\mapsto B_{x,\omega}(y)$ is continuous on $M\times CM(\infty) \times M$.
			\end{lemma}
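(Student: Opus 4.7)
The plan is to reduce the continuity of the scaled map $\beta$ to the known continuity of the unscaled map $(x,\xi,y)\mapsto B_{x,\xi}(y)$ on $M\times M(\infty)\times M$, by splitting into two cases according to whether the target point $\omega\in CM(\infty)$ is the cone point or not. Since $CM(\infty)$ is homeomorphic to $\mathbb{R}^n$ (via the map from $T_pM$ described just above the lemma), it is first-countable, so sequential continuity suffices.

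First I would handle the generic case: suppose $\omega=[(\xi,c)]$ with $c>0$, and let $(x_n,\omega_n,y_n)\to(x,\omega,y)$. By the definition of the quotient topology on $CM(\infty)$, for $n$ large enough we can write $\omega_n=[(\xi_n,c_n)]$ with $c_n>0$, where $\xi_n\to\xi$ in $M(\infty)$ (with the sphere topology) and $c_n\to c$. Then
\[
B_{x_n,\omega_n}(y_n)=c_n\,B_{x_n,\xi_n}(y_n)\;\longrightarrow\;c\,B_{x,\xi}(y)=B_{x,\omega}(y),
\]
using the continuity of $(x,\xi,y)\mapsto B_{x,\xi}(y)$ that the paper has just recalled from \cite[\S II.1]{ballmann2012lectures}.

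Next I would address the cone-point case, which is the only subtle one: suppose $\omega$ is the class of $M(\infty)\times\{0\}$, so $B_{x,\omega}\equiv 0$. Now convergence $\omega_n\to\omega$ in $CM(\infty)$ only forces the scale $c_n\to 0$; the directions $\xi_n$ need not converge. The key observation is that each unit-scale Busemann function satisfies $B_{x_n,\xi_n}(x_n)=0$ and is $1$-Lipschitz (recalled in Section~\ref{sec:pre3}), so
\[
|B_{x_n,\omega_n}(y_n)|=c_n\,|B_{x_n,\xi_n}(y_n)|\leq c_n\,d(x_n,y_n)\;\longrightarrow\;0,
\]
which is exactly $B_{x,\omega}(y)$. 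This handles the potential discontinuity across the identified fiber $M(\infty)\times\{0\}$ and completes the argument.

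The main obstacle, as expected for continuity on a quotient space, is precisely the cone-point case: a priori there is no control over the $\xi_n$ as $c_n\to 0$, so one needs a uniform bound on the unscaled Busemann values. The $1$-Lipschitz property of $B_{x,\xi}$ together with $B_{x,\xi}(x)=0$ supplies exactly the bound $|B_{x,\xi}(y)|\leq d(x,y)$ that is uniform in $\xi$, and this is what makes the collapse of $M(\infty)\times\{0\}$ to a single point compatible with the function $\beta$. Everything else reduces to composing the known continuity of $(x,\xi,y)\mapsto B_{x,\xi}(y)$ with the continuous scalar multiplication $(c,t)\mapsto ct$.
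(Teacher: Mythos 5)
Your proposal is correct and follows essentially the same route as the paper's proof: away from the cone point, continuity follows from the continuity of the unscaled map $(x,\xi,y)\mapsto B_{x,\xi}(y)$ together with continuity of scalar multiplication, and at the cone point one uses the bound $|B_{x,\omega}(y)|\leq\|\omega\|\,d(x,y)$ coming from the $\|\omega\|$-Lipschitz property and $B_{x,\omega}(x)=0$. The only cosmetic difference is that the paper phrases the argument via the metric on $CM(\infty)$ inherited from $T_pM$ rather than via sequences, which is immaterial.
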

			\begin{proof}
				Fix a reference point $p\in M$. Since $CM(\infty)$ is homeomorphic to $T_p M$, we can consider $M\times CM(\infty)\times M$ as a metric space, where the metric on $CM(\infty)$ is given by the usual metric in $T_p M$. Thus, it suffices to show that $\beta$ is continuous at every point in $M \times CM(\infty) \times M$. Since $CM(\infty) \setminus \{0\}$ is simply the product of $M(\infty)$ and $(0, \infty)$, the continuity of $\beta$ at points in $(M \times CM(\infty) \times M) \setminus (M \times \{0\} \times M)$ follows from the fact that the product of two continuous functions is continuous. We now show that $\beta$ is continuous at points in $M \times \{0\} \times M$. To show this, we need to show $\left|\beta(x,\omega,y)-\beta(x',0,y')\right|\to 0$ as $(x,\omega,y)\to (x',0,y')$. By the $\|\omega\|$-Lipschitz continuity of the scaled Busemann function $B_{x,\omega}$, we have
				\[
				\left|\beta(x,\omega,y)-\beta(x',0,y')\right|=\left|\beta(x,\omega,y)-0\right|\leq\|\omega\|d(x,y),
				\]
				from which the desired result follows.
			\end{proof}
			
			\subsection{Busemann functions on $\mathbb{H}^2$ with Poincaré disk model}
			\label{sec:h2}
			
			The content in this subsection is from \cite[Chapter~II.8--9]{bridson2013metric}. Consider the hyperbolic plane $M=\mathbb{H}^2$, represented by the \Poincare disk model $D=\{z\in \mathbb{C}:|z|<1\}$ in the complex plane. This space is a Hadamard manifold. Here, geodesics are all diameters of the disk, plus all arcs of Euclidean circles contained within the disk that are orthogonal to the boundary of the disk. The points on the boundary of the disk $\bd(D) = \{z\in \mathbb{C}:|z|=1\}$ are called the ideal points, and they are identified as elements of $M(\infty)$. Distances in this model are given by Cayley-Klein metrics. The unit-speed geodesic ray $\gamma$ with $\gamma(0)=0\in D$ and $\gamma(\infty)=\zeta\in \bd(D)$ is given by $\gamma(t)=\zeta\tanh(t/2)\in D$. The unit-scale Busemann function associated to $\gamma$ can be written as follows:\footnote{This result can be straightforwardly extended to the Poincar\'e ball model $\{z\in\mathbb{R}^{n}:\|z\|<1\}$, which represents the $n$-dimensional hyperbolic space $\mathbb{H}^n$. In this setting, the unit-speed geodesic rays starting at $0$ and the unit-scale Busemann functions are given by the same formulas, where the absolute value of complex numbers are replaced by the norm of $n$-vectors (see, for example, \cite[Appendix~A]{ghadimi2021hyperbolic}).}
			\[
			B_{\zeta}(z)=-\log\left(\frac{1-|z|^{2}}{|z-\zeta|^{2}}\right).
			\]
			\begin{proposition}
				\label{prop:b-on-H2}
				Let $\zeta,\xi\in \bd(D)$. Let $\gamma$ be the geodesic ray with $\gamma(0)=0$ and $\gamma(\infty)=\xi$.
				\begin{enumerate}[label=\textup{(\roman*)}]
					\item If $\xi=\zeta$, then we have $\frac{d}{dt}B_{\zeta}(\gamma(t))\to - 1$ as $t\to\infty$. 
					\item If $\xi\neq\zeta$, then we have $\frac{d}{dt}B_{\zeta}(\gamma(t))\to1$ as $t\to\infty$. 
				\end{enumerate}
			\end{proposition}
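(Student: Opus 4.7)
The plan is to substitute the explicit formulas for $\gamma(t)$ and $B_\zeta$ directly into $B_\zeta(\gamma(t))$, simplify, and then take a derivative. With $\gamma(t) = \xi\tanh(t/2)$, I first compute the two pieces appearing in the Busemann formula: $1 - |\gamma(t)|^2 = \sech^2(t/2)$, and
\[
|\gamma(t)-\zeta|^{2} = \tanh^{2}(t/2) - 2\alpha\tanh(t/2) + 1, \qquad \alpha := \operatorname{Re}(\xi\bar\zeta).
\]
Whether the proposition is in case (i) or (ii) is then controlled entirely by the value of $\alpha$: we have $\alpha = 1$ iff $\xi = \zeta$, and $\alpha < 1$ otherwise (since $|\xi| = |\zeta| = 1$).

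For case (i), when $\alpha = 1$, the numerator factors as $|\gamma(t)-\zeta|^2 = (1-\tanh(t/2))^2$, and the identity $\cosh(t/2)(1 - \tanh(t/2)) = e^{-t/2}$ gives $B_\zeta(\gamma(t)) = -\log(e^{t}) = -t$ exactly. Hence the derivative equals $-1$ for all $t > 0$, not merely in the limit. This is the cleaner of the two cases and should be dispatched first.

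For case (ii), when $\alpha < 1$, write
\[
B_\zeta(\gamma(t)) \;=\; 2\log\cosh(t/2) + \log\bigl(\tanh^{2}(t/2) - 2\alpha\tanh(t/2) + 1\bigr),
\]
and differentiate term-by-term. The first term contributes $\tanh(t/2) \to 1$, and the second contributes a factor with $\sech^{2}(t/2)$ in the numerator and denominator bounded away from $0$ (its limit is $2(1-\alpha) > 0$), hence vanishes. Adding these gives the limit $1$. I would note in passing that this also yields the asymptotic expansion $B_\zeta(\gamma(t)) = t - \log\!\bigl(\tfrac{2}{1-\alpha}\bigr) + o(1)$, which makes the behavior transparent.

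There is no real obstacle here; the only place for care is that in case (ii) one must verify convergence of the derivative itself, not merely of $B_\zeta(\gamma(t))/t$, so the derivative must be written out and the denominator $\tanh^{2}(t/2) - 2\alpha\tanh(t/2) + 1$ must be checked to stay bounded away from zero as $t\to\infty$, which is immediate from $\alpha < 1$. One can double check both cases against the general Busemann-function properties recalled in Section~\ref{sec:pre3}: case (i) matches the fact that $t \mapsto B_{p,v}(\exp_p(-tv))$ satisfies $B_{\gamma}(\gamma(t)) = -t$ along its own generating ray, and case (ii) reflects that two distinct ideal points in $\mathbb{H}^n$ are antipodal in the Tits metric, so the rays head ``away from'' $\zeta$ at unit rate.
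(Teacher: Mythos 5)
Your proof is correct, but it takes a genuinely different route from the paper's. The paper never touches the explicit formula: it writes $\frac{d}{dt}B_{\zeta}(\gamma(t))=-\cos\angle_{\gamma(t)}(\zeta,\xi)$ using the fact that $\nabla B_{\zeta}$ at a point is the unit vector pointing away from $\zeta$, and then invokes two cited facts about the boundary at infinity of $\mathbb{H}^2$ --- that $\angle(\xi,\zeta)$ equals $0$ or $\pi$ according to whether $\xi=\zeta$, and that $\angle_{\gamma(t)}(\xi,\zeta)\to\angle(\xi,\zeta)$ --- which is essentially the heuristic you relegate to your closing ``double check.'' Your computation instead substitutes $\gamma(t)=\xi\tanh(t/2)$ into $B_{\zeta}(z)=-\log\bigl(\tfrac{1-|z|^{2}}{|z-\zeta|^{2}}\bigr)$ and differentiates; the algebra checks out ($1-|\gamma(t)|^{2}=\sech^{2}(t/2)$, the reduction to $\alpha=\operatorname{Re}(\xi\bar\zeta)$, the exact identity $B_{\zeta}(\gamma(t))=-t$ when $\alpha=1$, and the limit $\tanh(t/2)+o(1)\to 1$ when $\alpha<1$, where the denominator tends to $2(1-\alpha)>0$). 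What your approach buys is self-containedness and more information --- it needs nothing beyond the formula already displayed in the appendix, and it yields the exact value in case (i) and the asymptotic $B_{\zeta}(\gamma(t))=t-\log\bigl(\tfrac{2}{1-\alpha}\bigr)+o(1)$ in case (ii). What the paper's argument buys is brevity and portability: it isolates exactly which geometric inputs (angles at infinity, continuity of the comparison angle) drive the result, which is the viewpoint the surrounding appendix is set up to exploit.
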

			\begin{proof}
				Since the gradient of $B_{\xi}$ at $p$ is $v\in T_p M$ such that $\|v\|=1$ and $\exp_p(-\infty v)=\xi$, we have $\frac{d}{dt}B_{\zeta}(\gamma(t))=-\cos\angle_{\gamma(t)}(\zeta,\xi)$. Note that in $\mathbb{H}^2$, we have $\angle(\xi,\eta)=0$ when $\xi=\eta$ and $\angle(\xi,\eta)=\pi$ when $\xi \neq \eta$ \cite[Example~II.9.6]{bridson2013metric}. The desired result follows from $\lim_{t\to\infty}\angle_{\gamma(t)}(\xi,\eta)=\angle(\xi,\eta)$ \cite[Prop.~II.9.8]{bridson2013metric}.
			\end{proof}

			In particular, we will use the following immediate consequence when constructing counterexamples in Appendices~\ref{app:convexity_local_property} and \ref{app:subdifferential_local_concept}. It states that when comparing scaled Busemann functions on $\mathbb{H}^2$, the large-scale one (centered at $\zeta_2$) eventually surpasses the small-scale one in every direction except the direction toward $\zeta_2$.
			\begin{proposition}
				\label{prop:will-used-counter}
				Let $\zeta_1,\zeta_2,\xi\in \bd(D)$. Let $0<c_1<c_2$. Let $\gamma$ be the geodesic ray with $\gamma(0)=0$ and $\gamma(\infty)=\xi$.
				\begin{enumerate}[label=\textup{(\roman*)}]
					\item If $\xi=\zeta_2$, then there is $T$ such that $c_1B_{\zeta_1}(\gamma(t)) \geq c_2B_{\zeta_2}(\gamma(t))$ for all $t\geq T$. 
					\item If $\xi\neq\zeta_2$, then there is $T$ such that $c_1B_{\zeta_1}(\gamma(t)) \leq c_2B_{\zeta_2}(\gamma(t))$ for all $t\geq T$.
				\end{enumerate}
			\end{proposition}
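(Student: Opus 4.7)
The plan is to reduce both parts to the asymptotic linear growth of $t \mapsto B_\zeta(\gamma(t))$, which is already computed at the derivative level in Proposition~\ref{prop:b-on-H2}, and then to compare leading-order terms as $t \to \infty$.

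First I will upgrade Proposition~\ref{prop:b-on-H2} from a statement about derivatives to one about function values. Reading off the explicit formula for the Busemann function on the Poincar\'e disk gives $B_\zeta(0) = -\log(1/|\zeta|^2) = 0$, since $|\zeta| = 1$. Because $\gamma(0) = 0$, we can write
\[
B_\zeta(\gamma(t)) = \int_0^t \frac{d}{ds} B_\zeta(\gamma(s))\, ds.
\]
The integrand is bounded by $1$ in absolute value (it equals $-\cos\angle_{\gamma(s)}(\zeta,\xi)$, as noted in the proof of Proposition~\ref{prop:b-on-H2}) and converges to $-1$ if $\xi = \zeta$ and to $+1$ if $\xi \neq \zeta$. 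By the elementary Ces\`aro-type fact that if $h(s) \to \ell$ then $\frac{1}{t}\int_0^t h(s)\, ds \to \ell$, we conclude
\[
\frac{B_\zeta(\gamma(t))}{t} \longrightarrow \varepsilon(\zeta,\xi) := \begin{cases} -1 & \text{if } \xi = \zeta, \\ +1 & \text{if } \xi \neq \zeta, \end{cases}
\]
as $t \to \infty$. In particular $B_\zeta(\gamma(t)) \to +\infty$ when $\xi \neq \zeta$ and $B_\zeta(\gamma(t)) \to -\infty$ when $\xi = \zeta$.

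Next I use this to compare the two sides of each inequality. In case (i), $\xi = \zeta_2$, so $B_{\zeta_2}(\gamma(t))/t \to -1$. If $\zeta_1 = \zeta_2$, the claimed inequality is $(c_2-c_1) B_{\zeta_2}(\gamma(t)) \leq 0$, which holds for large $t$ because $B_{\zeta_2}(\gamma(t)) \to -\infty$. If $\zeta_1 \neq \zeta_2$, then $B_{\zeta_1}(\gamma(t))/t \to +1$, and hence
\[
\frac{c_1 B_{\zeta_1}(\gamma(t)) - c_2 B_{\zeta_2}(\gamma(t))}{t} \longrightarrow c_1 + c_2 > 0,
\]
so the left-hand side exceeds the right-hand side for all sufficiently large $t$. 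In case (ii), $\xi \neq \zeta_2$, so $B_{\zeta_2}(\gamma(t))/t \to +1$. If $\xi = \zeta_1$, then $B_{\zeta_1}(\gamma(t))/t \to -1$ and $(c_2 B_{\zeta_2} - c_1 B_{\zeta_1})(\gamma(t))/t \to c_1 + c_2 > 0$. If $\xi \neq \zeta_1$, then $B_{\zeta_1}(\gamma(t))/t \to +1$, and now
\[
\frac{c_2 B_{\zeta_2}(\gamma(t)) - c_1 B_{\zeta_1}(\gamma(t))}{t} \longrightarrow c_2 - c_1 > 0,
\]
using precisely the hypothesis $c_1 < c_2$. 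In each of the three remaining sub-cases the desired inequality therefore holds for $t$ sufficiently large, which yields the required threshold $T$.

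There is no real obstacle here; the only point requiring care is the bookkeeping of sub-cases according to whether $\zeta_1$ equals $\zeta_2$ (in (i)) or $\xi$ (in (ii)), and the recognition that the strict inequality $c_1 < c_2$ is exactly what gives a positive leading coefficient in the ``generic'' sub-case of (ii). The preceding Proposition~\ref{prop:b-on-H2} does all the geometric work, and everything else is a one-line asymptotic comparison.
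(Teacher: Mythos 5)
Your proof is correct and follows exactly the route the paper intends: the paper states this proposition without proof as an ``immediate consequence'' of Proposition~\ref{prop:b-on-H2}, and your argument --- integrating the derivative, extracting the linear growth rate $B_\zeta(\gamma(t))/t \to \pm 1$, and comparing leading coefficients across the sub-cases --- is precisely the fleshed-out version of that deduction. The case bookkeeping and the use of $c_1 < c_2$ in the generic sub-case of (ii) are all handled correctly.
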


			\section{Horospherically convex analysis: Details for Section~\ref{sec:h-convex_analysis}}
			
			\subsection{Proof of Proposition~\ref{prop:sctoc}: From squared distances to Busemann functions}
			\label{app:prop:sctoc}

			Note that the equality
			\begin{equation}
				\label{eq:scceq}
				\lim_{t\to\infty}(d(p,\gamma(t))-t)=\lim_{t\to\infty}\frac{d^{2}(p,\gamma(t))-t^{2}}{2t},
			\end{equation}
			holds for any $p\in M$ and any unit-speed geodesic $\gamma$, where both limits always exist (see \cite[Example~3.8]{ballmann1985}). Let $p=y$, $\gamma(0)=y$, and $\gamma'(0)=-v/\|v\|$. Then, we have
			\begin{align*}
				\lim_{\mu\to0}Q_{y,v}^{\mu}(x) & =\lim_{\mu\to0}\left[-\frac{1}{2\mu}\|v\|^{2}+\frac{\mu}{2}d\left(\exp_{y}\left(-\frac{1}{\mu}v\right),x\right)^{2}\right]\\
				& =\lim_{s\to\infty}\left[-\frac{s}{2}\|v\|^{2}+\frac{1}{2s}d\left(\exp_{y}\left(-sv\right),x\right)^{2}\right] && (s\leftarrow1/\mu)\\
				& =\lim_{t\to\infty}\left[-\frac{t}{2}\|v\|+\frac{\|v\|}{2t}d\left(\exp_{y}\left(-t\frac{v}{\|v\|}\right),x\right)^{2}\right] && (t\leftarrow\|v\|s)\\
				& =\|v\|\lim_{t\to\infty}\left[d\left(\exp_{y}\left(-t\frac{v}{\|v\|}\right),x\right)-t\right]\\
				& =\|v\|B_{y,v/\|v\|}(x)\\
				& =B_{y,v}(x),
			\end{align*}
			where we used \eqref{eq:scceq} at the fourth equality, and the definition of the Busemann function $B_{y,v/\|v\|}$ at the fifth equality.
			
			\subsection{Proof of Proposition~\ref{prop:basic}: Basic properties of h-convex functions}
			\label{app:prop:basic}
			
			\begin{enumerate}[label=\textup{(\roman*)}]
				\item Let $F(z)=d(x,z)^2$ and $\gamma(t)=\exp_{y}(-tv)$. Then, we have
				\[
				\frac{d}{dt}d\left(\gamma(t),x\right)^{2}=\left\langle \nabla F(\gamma(t)),\dot{\gamma}(t)\right\rangle =-2\left\langle \exp_{\gamma(t)}^{-1}(x),\dot{\gamma}(t)\right\rangle =2\left\langle \exp_{\gamma(t)}^{-1}(x),\Gamma_{y}^{\gamma(t)}v\right\rangle ,
				\]
				where $\Gamma_{p}^{q}$ denotes the parallel transport from $p$ to $q$. Thus, the partial derivative of the right-hand side $Q_{y,v}^{\mu}(x)$ of \eqref{eq:defining_strong-h-convex} by $\mu$ can be computed as
				\begin{align*}
					\frac{\partial}{\partial\mu}Q_{y,v}^{\mu}(x) & =\frac{\partial}{\partial\mu}\left\{ -\frac{1}{2\mu}\|v\|^{2}+\frac{\mu}{2}d\left(\exp_{y}\left(-\frac{1}{\mu}v\right),x\right)^{2}\right\} \\
					& =\frac{dt}{d\mu}\frac{\partial}{\partial t}\left\{ -\frac{t}{2}\|v\|^{2}+\frac{1}{2t}d\left(\exp_{y}\left(-tv\right),x\right)^{2}\right\} \\
					& =-\frac{1}{\mu^{2}}\left[-\frac{1}{2}\|v\|^{2}-\frac{1}{2t^{2}}d\left(y^{++},x\right)^{2}+\frac{1}{t}\left\langle \exp^{-1}_{y^{++}}(x),\Gamma_{y}^{y^{++}}v\right\rangle \right]\\
					& =\frac{1}{2\mu^{2}}\|v\|^{2}+\frac{1}{2}d\left(y^{++},x\right)^{2}-\mu\left\langle \exp^{-1}_{y^{++}}(x),\frac{1}{\mu^{2}}\Gamma_{y}^{y^{++}}v\right\rangle \\
					& =\frac{1}{2\mu^{2}}\|\Gamma_{y}^{y^{++}}v\|^{2}+\frac{1}{2}\|\exp^{-1}_{y^{++}}(x)\|^{2}-\left\langle \exp^{-1}_{y^{++}}(x),\frac{1}{\mu}\Gamma_{y}^{y^{++}}v\right\rangle \\
					& =\frac{1}{2}\left\Vert \frac{1}{\mu}\Gamma_{y}^{y^{++}}v-\exp^{-1}_{y^{++}}(x)\right\Vert ^{2}\\
					& \geq0,
				\end{align*}
				where we used the substitution $t=1/\mu$ and denoted $y^{++} = \exp_y(-\frac{1}{\mu}v)=\exp_y(-tv)$. Thus, the right-hand side $Q_{y,v}^{\mu}(x)$ of \eqref{eq:defining_strong-h-convex} increases as $\mu$ increases. It follows that
				\begin{align*}
					\ph_{\mu_{2}}f(y) & =\left\{ v\in T_{y}M:f(x)-f(y)\geq Q_{y,v}^{\mu_{2}}(x)\ \forall x\in M\right\} \\
					& \subseteq\left\{ v\in T_{y}M:f(x)-f(y)\geq Q_{y,v}^{\mu_{1}}(x)\ \forall x\in M\right\} \\
					& =\ph_{\mu_{1}}f(y).
				\end{align*}
				
				\item Note that $f$ is $\mu_i$-strongly h-convex if and only if $\ph_{\mu_{i}} f(y)$ is nonempty for all $y \in M$. Thus, (ii) follows from (i).
				
				\item Note that the right-hand side $Q_{y,v}^{\mu}(x)$ of \eqref{eq:defining_strong-h-convex} is $\mu$-strongly g-convex as a function of $x$. Since $Q_{y,v}^{\mu}(y) = f(y)$ and $\nabla Q_{y,v}^{\mu}(y) = v$, we have
				\[
				Q_{y,v}^{\mu}(x) \geq f(y)+\langle v,\exp^{-1}_{y}(x)\rangle+\frac{\mu}{2}d(x,y)^{2}.
				\]
				This shows that
				\begin{equation}
					\label{eq:in-proving-iii}
					\begin{aligned}
						\ph_{\mu}f(y) & \subseteq\{v\in T_{y}M:f(x)\geq f(y)+\langle v,\exp_{y}^{-1}(x)\rangle+\frac{\mu}{2}d(x,y)^{2}\textup{ for all }x\in M\}\\
						& \subseteq\{v\in T_{y}M:f(x)\geq f(y)+\langle v,\exp_{y}^{-1}(x)\rangle\textup{ for all }x\in M\}\\
						& =:\partial f(y).
					\end{aligned}
				\end{equation}

				\item Note that $f$ is $\mu$-strongly h-convex if and only if $\ph_{\mu} f(y)$ is nonempty for all $y \in M$, and that $f$ is $\mu$-strongly g-convex if and only if the set $\{v\in T_{y}M:f(x)\geq f(y)+\langle v,\exp_{y}^{-1}(x)\rangle+\frac{\mu}{2}d(x,y)^{2}\textup{ for all }x\in M\}$ is nonempty for all $y\in M$. Thus, (iv) follows from the first inclusion in \eqref{eq:in-proving-iii}.
				
				\item ($\subseteq$) This follows from $\ph_{\mu} f(y)\subseteq\partial f(y)$ (see (iii)) and $\partial f(y)=\{\nabla f(y)\}$.
				
				($\supseteq$) We have already shown $\ph_{\mu}f(y)\subseteq\{\nabla f(y)\}$. If the reverse inclusion does not hold, then $\ph_{\mu} f(y)$ is empty, which means that $f$ is not $\mu$-strongly h-convex, a contradiction. 
				
				\item Suppose that $\{f_i\}_{i\in I}$ is a family of $\mu$-strongly h-convex functions on $M$. We need to show that the supremum $f=\sup_{i\in I} f_i$ is $\mu$-strongly h-convex. For simplicity, we provide a proof for the case $\mu=0$. The case $\mu>0$ can be handled by using $Q_{y,v}^{\mu}$ instead of $B_{y,v}$. Fix $y\in M$. We need to show the existence of an h-subgradient of $f$ at $y$. For $k=1,2,\ldots$, since $f(y)=\sup_{i\in I}f_i (y)$, there is an index $i_k \in I$ such that $f_{i_k}(y)\geq f(y)-1/k$. For each $k$, choose $v_k\in \ph f_{i_k}(y)$. Then, for each $k$, by the h-convexity of $f_{i_k}$, we have
				\begin{equation}
					\label{eq:forthm1-copy}
					f(x)\geq f_{i_{k}}(x)\geq f_{i_{k}}(y)+B_{y,v_{k}}(x)\geq f(y)-\frac{1}{k}+B_{y,v_{k}}(x)\quad \forall x\in M.
				\end{equation}
				Note that the norms of $v_k$ are bounded.\footnote{\label{footnote:vkbounded}Since $f:M\to\mathbb{R}$ is a g-convex function, it is Lipschitz continuous on any compact subset of $M$ (see \cite[Cor.~3.10]{udriste1994convex}). Consider the closed unit ball $\bar{B}(y,1)$ centered at $y$, which is a compact subset of $M$. Then, there is a constant $L$ such that $|f(x)-f(y)|\leq Ld(x,y)$ for all $x\in\bar{B}(y,1)$. Let $x=\exp_{y}(v_k/\|v_k\|)$. Then, since $d(x,y)=1$, we have $f(x)\leq f(y)+L$. Combining this inequality with \eqref{eq:forthm1-copy}, we obtain $L+1/k\geq \|v_k\|$. Thus, the sequence $\|v_k\|$ is bounded.} Thus, we can extract a subsequence of $\{v_k\}$ which converges to some $v\in T_y M$. Denote by $\phi$ the homeomorphism from $T_y M$ to $CM(\infty)$ constructed in Appendix~\ref{app:dual_space}. We now take the limit $k\to\infty$ in \eqref{eq:forthm1}. Since the map $v\mapsto B_{y,v}(x)=B_{y,\phi(v)}(x)$ is continuous by Lemma~\ref{lem:B-is-conti}, we have
				\[
				f(x)\geq f(y)+B_{y,v}(x)\quad \forall x\in M.
				\]

				\item Let $x$ be an arbitrary point in $M$. We denote a subgradient of $f$ at $x$ as $g_x$, and a subgradient of $h$ at $f(x)$ as $m$. It suffices show that the inequality
				\[
				h(f(y))\geq h(f(x))+B_{x,mg_x}(y).
				\]
				holds for all $y\in M$. By the h-convexity of $f$, we have
				\[
				f(y)-f(x)\geq B_{x,g_x}(y).
				\]
				Multiplying both sides by $m$ yields
				\[
				m\left(f(y)-f(x)\right)\geq B_{x,mg_x}(y).
				\]
				Since $m$ is a subgradient of $h$ at $f(x)$, we have
				\[
				h(f(y))-h(f(x))\geq m\left(f(y)-f(x)\right).
				\]
				Combining these results completes the proof.
				\item A straightforward calculation shows $Q_{y,rv}^{r\mu}(x)=rQ_{y,v}^{\mu}(x)$. Thus, 
				\begin{align*}
					\textup{\ensuremath{f} is \ensuremath{\mu}-strongly h-convex} & \Longleftrightarrow[\exists v\in T_{y}M\textup{ such that }f(x)-f(y)\geq Q_{y,v}^{\mu}(x)]\ \forall x,y\in M\\
					& \Longleftrightarrow[\exists v\in T_{y}M\textup{ such that }rf(x)-rf(y)\geq Q_{y,rv}^{r\mu}(x)]\ \forall x,y\in M\\
					& \Longleftrightarrow\textup{\ensuremath{rf} is \ensuremath{(r\mu)}-strongly h-convex}.
				\end{align*}
			\end{enumerate}

			\subsection{Horospherical convexity is not preserved under addition}
			\label{app:rmk:basic}
			
			In this subsection, we show that the h-convexity of functions is not preserved under addition by providing a counterexample. Consider the hyperbolic plane $\mathbb{H}^2$ in the \Poincare disk model $D=\{z\in\mathbb{C}:|z|<1\}$, where $p\in\mathbb{H}^2$ is identified with $0\in D$ (see Section~\ref{sec:h2}). Consider three ideal points $\xi_{1}=1$, $\xi_{2}=i$, and $\bar{\xi}=\frac{\sqrt{2}}{2}+\frac{\sqrt{2}}{2}i$ in $\bd(D)$. If h-convexity is preserved under addition, then the function $B_{p,\xi_1}+B_{p,\xi_2}$ should be h-convex, and thus the inequality $B_{p,\xi_{1}}(x)+B_{p,\xi_{2}}(x)\geq\sqrt{2}B_{p,\bar{\xi}}(x)$ must hold for all $x\in D$ since $\sqrt{2}\nabla B_{\bar{\xi}}(p)$ is the gradient of $B_{p,\xi_1}+B_{p,\xi_2}$ at $p$. However, this inequality does not hold at $x=\frac{\sqrt{2}}{2}+0i$ because
			\begin{align*}
				B_{p,\xi_{1}}(x) & =-\log\left(\frac{1-(1/\sqrt{2})^{2}}{(1-1/\sqrt{2})^{2}}\right)=-\log\left(\frac{1/2}{(1-1/\sqrt{2})^{2}}\right)\approx-1.76275,\\
				B_{p,\xi_{2}}(x) & =-\log\left(\frac{1-(1/\sqrt{2})^{2}}{(1/\sqrt{2})^{2}+1^{2}}\right)=-\log\left(\frac{1/2}{3/2}\right)\approx1.09861,\\
				B_{p,\bar{\xi}}(x) & =-\log\left(\frac{1-(1/\sqrt{2})^{2}}{(1/\sqrt{2})^{2}}\right)=-\log\left(\frac{1/2}{1/2}\right)=0.
			\end{align*}
			\begin{remark}
				\label{rmk:from-sum-example}
				In particular, $f = B_{\xi_{1}} + B_{\xi_{2}}$ serves as an example of a g-convex function that is not h-convex. At $p\in\mathbb{H}^2$, we have $\partial f(p)=\{\sqrt{2}\nabla B_{\bar{\xi}}(p)\}$ and $\ph f(p)=\emptyset$. Thus, $\partial f(p)\supsetneq \ph f(p)$.
			\end{remark}
			
			\subsection{Proof of Theorem~\ref{thm:envelope_rep_h-convex}: Outer characterization for h-convex functions}
			\label{app:thm:envelope_rep_h-convex}
			
			The proof techniques are the same for both the non-strongly h-convex case and the strongly h-convex case, as one only needs to use $Q_{y,v}^{\mu}$ instead of $B_{y,v}$. Here, we provide a proof for the non-strongly h-convex case ($\mu=0$). We will prove that $f$ is h-convex if and only if it can be written as the supremum of scaled Busemann functions.
			
			($\Longrightarrow$) For each $y\in M$, choose $v_y\in \ph f(y)$. Then, we have
			\[
			f(x)=\sup\left\{ f(y)+B_{y,v_{y}}(x):y\in M\right\}.
			\]
			
			($\Longleftarrow$) Fix $y\in M$. We need to show the existence of an h-subgradient of $f$ at $y$. For $k=1,2,3,\ldots$, by the assumption, there is a scaled Busemann function $B_k$ minorizing $f$ such that $B_k(y)\geq f(y)-1/k$. Let $v_k=\nabla B_k(y)\in T_y M$. Then, we have
			\begin{equation}
				\label{eq:forthm1}
				f(x)\geq f(y)-\frac{1}{k}+B_{y,v_k}(x)\quad \forall x\in M.
			\end{equation}
			Since the norm of $v_k$ is bounded (using the argument in Footnote~\ref{footnote:vkbounded}), we can extract a subsequence of $\{v_k\}$ which converges to some $v$. Now, we take the limit $k\to\infty$ in \eqref{eq:forthm1}. Since the map $v\mapsto B_{y,v}(x)$ is continuous (Lemma~\ref{lem:B-is-conti}), we have
			\[
			f(x)\geq f(y)+B_{y,v}(x)\quad \forall x\in M.
			\]
			
			\subsection{Proof of Theorem~\ref{thm:moreau}: Moreau envelope}
			\label{app:thm:moreau}
			Let us first prove Lemma~\ref{lem:moreau}, which we repeat for the reader's convenience.
			\begin{lemma}
				Let $x,x',y,y'\in M$. Then, we have
				\[
				B_{x',\exp_{x'}^{-1}(x)}(y')+\frac{1}{2}d(y,y')^{2}+\frac{1}{2}d(x,x')^{2}\geq B_{x',\exp_{x'}^{-1}(x)}(y).
				\]
			\end{lemma}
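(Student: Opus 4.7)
The plan is to bound the difference $B_{x',\exp_{x'}^{-1}(x)}(y) - B_{x',\exp_{x'}^{-1}(x)}(y')$ using only two ingredients: the Lipschitz property of scaled Busemann functions and the Young/AM--GM inequality. No geometric comparison or curvature argument should be needed.

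First, I would set $v := \exp_{x'}^{-1}(x)\in T_{x'}M$, so that $\|v\| = d(x,x')$. By the convention for scaled Busemann functions recalled in Section~\ref{sec:pre3}, one has $B_{x',v} = \|v\|\cdot B_{x',v/\|v\|}$, and the unit-scale Busemann function $B_{x',v/\|v\|}$ is $1$-Lipschitz. Hence
\[
B_{x',v}(y) - B_{x',v}(y') \;\leq\; \|v\|\, d(y,y') \;=\; d(x,x')\, d(y,y').
\]

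Second, I would apply the elementary inequality $ab \leq \tfrac{1}{2}a^2 + \tfrac{1}{2}b^2$ with $a = d(x,x')$ and $b = d(y,y')$, to obtain
\[
d(x,x')\, d(y,y') \;\leq\; \tfrac{1}{2}d(x,x')^2 + \tfrac{1}{2}d(y,y')^2.
\]
Chaining the two inequalities and rearranging yields the claim.

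There is no real obstacle here: the only thing to be careful about is the sign/scale convention, namely that $B_{p,w}$ is $\|w\|$-Lipschitz (since it is literally $\|w\|$ times a $1$-Lipschitz function), and that $\|\exp_{x'}^{-1}(x)\| = d(x,x')$. Everything else is just Young's inequality.
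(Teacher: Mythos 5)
Your proof is correct, and it is a cleaner packaging of the same underlying mechanism as the paper's. The paper proves the inequality at the level of the finite approximants $z_s = \exp_{x'}(-\tfrac{s}{a}\exp_{x'}^{-1}(x))$: it writes the claimed quantity with $d(\cdot,z_s)-s$ in place of the Busemann function, observes that the resulting expression, viewed as a quadratic in $a=d(x,x')$, is minimized at $a = d(y,z_s)-d(y',z_s)$ and factors as a product of two triangle-inequality-nonnegative terms, and then lets $s\to\infty$. If you unwind that argument, the triangle-inequality step is exactly the $1$-Lipschitzness of $d(\cdot,z_s)$ and the quadratic minimization is exactly Young's inequality --- which is what you invoke directly, using the already-recorded fact that $B_{x',v}$ is $\|v\|$-Lipschitz. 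What your route buys is brevity and the avoidance of the limit argument entirely; what the paper's route buys is self-containedness (it only uses the definition of the Busemann function and the triangle inequality, not the Lipschitz property as a black box). One small point worth a sentence in a write-up: when $x=x'$ the vector $v$ vanishes, so $B_{x',v}\equiv 0$ and the claim is trivial; your normalization $B_{x',v}=\|v\|B_{x',v/\|v\|}$ (and the paper's $z_s$) implicitly assumes $v\neq 0$, so this degenerate case should be dispatched separately.
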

			\begin{proof}
				Let $s$ be an arbitrary positive constant. Denote $a = d(x,x')$. Let $z_{s}=\exp_{x'}(-\frac{s}{a}\exp^{-1}_{x'}(x))$. Then, we have
				\begin{align*}
					& a(d(y',z_{s})-s)+\frac{1}{2}a^{2}+\frac{1}{2}d(y,y')^{2}-a(d(y,z_{s})-s)\\
					& \qquad=ad(y',z_{s})+\frac{1}{2}a^{2}+\frac{1}{2}d(y,y')^{2}-ad(y,z_{s})\\
					& \qquad\geq\frac{1}{2}(-d(y',z_{s})+d(y,y')+d(y,z_{s}))(d(y',z_{s})+d(y,y')-d(y,z_{s}))\\
					& \qquad\geq0,
				\end{align*}
				where the first inequality follows from minimizing over $a$, and the second inequality follows from the triangle inequalities. Taking the limit $s \to \infty$ yields the desired result.
			\end{proof}
			We now prove Theorem~\ref{thm:moreau}. Assume that $x,y\in M$ are given. We need to show $f_{\lambda}(x) + B_{x, \nabla f_{\lambda}(x)}(y) \leq f_{\lambda}(y)$. Let $x'=\prox_{\lambda f}(x)$ and $y'=\prox_{\lambda f}(y)$. Then, we have $\nabla f_{\lambda}(x) = -\frac{1}{\lambda} \exp^{-1}_{x}(x')$ by Proposition~\ref{prop:moreau-basic}~(iv). By the definition of the proximal operator, we also have
			\begin{align*}
				\nabla f(x') & =\frac{1}{\lambda}\exp_{x'}^{-1}(x),\;f_{\lambda}(x)=f(x')+\frac{1}{2\lambda}d(x,x')^{2},\\
				\nabla f(y') & =\frac{1}{\lambda}\exp^{-1}_{y'}(y),\;f_{\lambda}(y)=f(y')+\frac{1}{2\lambda}d(y,y')^{2}.
			\end{align*}
			We now have
			\begin{align*}
				f_{\lambda}(y) & =\frac{1}{2\lambda}d(y,y')^{2}+f(y')\\
				& \geq\frac{1}{2\lambda}d(y,y')^{2}+f(x')+B_{x',\frac{1}{\lambda}\exp^{-1}_{x'}(x)}(y')\\
				& =f_{\lambda}(x)+B_{x',\frac{1}{\lambda}\exp^{-1}_{x'}(x)}(y')+\frac{1}{2\lambda}d(x,x')^{2}-\frac{1}{\lambda}d(x,x')^{2}+\frac{1}{2\lambda}d(y,y')^{2}\\
				& \geq f_{\lambda}(x)+B_{x',\frac{1}{\lambda}\exp^{-1}_{x'}(x)}(y)-\frac{1}{\lambda}d(x,x')^{2}\\
				& =f_{\lambda}(x)+B_{x,-\frac{1}{\lambda}\exp^{-1}_{x}(x')}(y),
			\end{align*}
			where the first inequality follows from the h-convexity of $f$, and the second inequality follows from Lemma~\ref{lem:moreau}. This shows the h-convexity of $f_{\lambda}$. The $(1/\lambda)$-h-smoothness of $f_{\lambda}$ follows from Proposition~\ref{prop:smooth}~(i).

			\subsection{Proof of Proposition~\ref{prop:smooth}: Horospherical $L$-smoothness}
			\label{app:prop:smooth}
			
			\begin{enumerate}[label=\textup{(\roman*)}]
				\item The proof is similar to that of Theorem~\ref{thm:envelope_rep_h-convex} in Appendix~\ref{app:thm:envelope_rep_h-convex}. 
				
				($\Longrightarrow$) We have
				\[
				f(y)=\inf\left\{ f(x)+Q_{x,\nabla f(x)}^{L}(y):x\in M\right\} .
				\]
				
				($\Longleftarrow$) We need to show that the inequality \eqref{eq:defining-smooth} holds for all $x,y\in M$. Fix $y\in M$. For $k=1,2,3,\ldots$, by the assumption that $f$ is the infimum of functions of the form $q^{L}_{p,r}(x)= \frac{L}{2}d(x,p)^{2}+r$, there is a function $q_k$ (in such form) majorizing $f$ such that $q_k(y)\leq f(y)+1/k$. Let $v_k=\nabla q_k(y)\in T_y M$. Then, we have
				\begin{equation}
					\label{eq:forthm1-smooth}
					f(x)\leq f(y)+\frac{1}{k}+Q_{y,v_{k}}^{L}(x)\quad \forall x\in M.
				\end{equation}
				Using the differentiability of $f$, one can check $\lim_{k\to\infty}v_k=\nabla f(y)$. Now, we take the limit $k\to\infty$ in \eqref{eq:forthm1-smooth}. The map $v\mapsto Q_{y,v}^{L}(x)$ is continuous since distance functions are continuous. Thus, we have
				\[
				f(x)\leq f(y)+Q_{y,\nabla f(y)}^{L}(x)\quad \forall x\in M.
				\]
				
				\item Note that $Q_{y,\nabla f(y)}^{L}(x)$ is $L$-strongly g-convex, its function value at $y$ is $f(y)$, and that its gradient values at $y$ is $\nabla f(y)$. From the (geodesic) $L$-smoothness of $f$, it follows that
				\[
				f(x)\leq f(y)+\langle \nabla f(y),\exp_{y}^{-1}(x)\rangle+\frac{L}{2}d(x,y)^{2}\leq f(y)+Q_{y,\nabla f(y)}^{L},
				\]
				which shows the horospherical $L$-smoothness of $f$.
				\item Put $x\leftarrow y^{+}$ in \eqref{eq:defining-smooth}.
			\end{enumerate}
			
			\subsection{Proof of Theorem~\ref{thm:set_function}: Equivalence between h-convex sets and h-convex functions}
			\label{app:thm:set_function}

			We first review the standard proof of Theorem~\ref{thm:set_function} in the case where $M=\mathbb{R}^n$.
			\begin{proof}[Proof of Theorem~\ref{thm:set_function} with $M=\mathbb{R}^n$]
				($\Longrightarrow$) If $f$ is convex, then $f$ is the supremum of its affine supports. Thus, $\epi f$ is the intersection of their epigraphs, which are closed half-spaces. Therefore, $\epi f$ is closed and convex.
				
				($\Longleftarrow$) If $\epi f$ is closed and convex, then it is the intersection of closed half-spaces in $\mathbb{R}^n \times \mathbb{R}$ by the outer characterization of closed convex sets in $\mathbb{R}^n \times \mathbb{R}$ \cite[Theorem~11.5]{rockafellar1997convex}. Note that closed half-spaces in $\mathbb{R}^n \times \mathbb{R}$ can be categorized into the following three cases: (i) epigraphs of affine functions, (ii) $H \times \mathbb{R}$ for some closed half-space $H$ in $\mathbb{R}^n$, and (iii) hypographs of affine functions. Since only the first type can contain $\epi f$, it follows that $\epi f$ is the intersection of epigraphs of affine functions. Hence, $f$ is the supremum of affine functions and thus convex.
			\end{proof}
			Note that the proof relies crucially on the fact that the epigraphs of affine functions are precisely the upper half-spaces in $\mathbb{R}^n \times \mathbb{R}$. We develop a similar result that fits well for h-convexity. We first review a basic property of Busemann functions on the product manifold $M=M_1\times M_2$.
			\begin{proposition}
				\label{prop:prod-B-functions}
				Let $x_1\in M_1$, $x_2\in M_2$, $(p_1,v_1)\in TM_1$, and $(p_2,v_2)\in TM_2$. Denote $p=(p_1,p_2)\in M$, $x=(x_1,x_2)\in M$, and $v=(v_1,v_2)\in T_p M$. Then, we have
				\[
				B_{p,v}(x)=B_{p_1,v_{1}}(x_1)+B_{p_2,v_{2}}(x_2).
				\]
			\end{proposition}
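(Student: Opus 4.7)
\medskip

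\noindent\textbf{Proof proposal.} The plan is a direct asymptotic computation using the definition of the scaled Busemann function and the Pythagorean form of the product metric $d((x_1,x_2),(y_1,y_2))^2 = d_1(x_1,y_1)^2 + d_2(x_2,y_2)^2$. First I would dispose of the degenerate case where $v_1 = 0$ or $v_2 = 0$. If, say, $v_2 = 0$, then $B_{p_2,v_2} \equiv 0$ and $\|v\|=\|v_1\|$; the geodesic $\gamma(t) = (\gamma_1(t), p_2)$ satisfies $d(\gamma(t),x)^2 = d_1(\gamma_1(t),x_1)^2 + d_2(p_2,x_2)^2$, and since $d_2(p_2,x_2)$ is a fixed constant while $d_1(\gamma_1(t), x_1)\to\infty$, one gets $d(\gamma(t),x) = d_1(\gamma_1(t),x_1) + O(1/t)$, yielding the identity immediately after taking the limit.

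For the main case where both $v_1, v_2$ are nonzero, I would set $\lambda_i = \|v_i\|/\|v\|$ (so $\lambda_1^2 + \lambda_2^2 = 1$) and observe that the unit-speed geodesic ray in $M$ with initial velocity $-v/\|v\|$ is $\gamma(t) = (\gamma_1(\lambda_1 t), \gamma_2(\lambda_2 t))$, where each $\gamma_i$ is unit-speed in $M_i$ with $-\gamma_i'(0)=v_i/\|v_i\|$. Writing $a_i(t) := d_i(\gamma_i(\lambda_i t), x_i)$, the Busemann limit on each factor gives the asymptotic expansion
\begin{equation*}
a_i(t) = \lambda_i t + c_i + o(1), \qquad c_i := B_{p_i, v_i}(x_i)/\|v_i\|,
\end{equation*}
as $t \to \infty$. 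Plugging into the product metric and expanding the square root yields
\begin{equation*}
d(\gamma(t), x) = \sqrt{a_1(t)^2 + a_2(t)^2} = t + \lambda_1 c_1 + \lambda_2 c_2 + o(1),
\end{equation*}
using $\lambda_1^2+\lambda_2^2=1$ to cancel the leading $t^2$ contribution. Therefore $\lim_{t\to\infty}(d(\gamma(t),x) - t) = \lambda_1 c_1 + \lambda_2 c_2$, and multiplying by $\|v\|$ gives $B_{p,v}(x) = \|v_1\| c_1 + \|v_2\| c_2 = B_{p_1,v_1}(x_1) + B_{p_2,v_2}(x_2)$, as desired.

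The only place that requires any care is the asymptotic expansion of $\sqrt{a_1^2 + a_2^2}$: one must keep track of constant-order terms while showing the $o(1)$ remainders remain $o(1)$ after the square-root expansion. This is a routine Taylor computation using $\sqrt{1+u} = 1 + u/2 + O(u^2)$ applied to $u = 2(\lambda_1 c_1 + \lambda_2 c_2)/t + O(1/t)$, so there is no real obstacle — the argument is essentially bookkeeping once the factorization of the product geodesic is in hand.
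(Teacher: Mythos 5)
Your proposal is correct, but it takes a different route from the paper: the paper's proof is essentially a citation, deferring the unit-norm case to Bridson--Haefliger (Example II.8.24) and then extending to scaled Busemann functions via the homogeneity identity $B_{p,cv}=cB_{p,v}$, whereas you give a self-contained asymptotic computation that handles the scaling and the product structure simultaneously. Your key steps all check out: the unit-speed ray in the product is indeed $\gamma(t)=(\gamma_1(\lambda_1 t),\gamma_2(\lambda_2 t))$ with $\lambda_i=\|v_i\|/\|v\|$ and $\lambda_1^2+\lambda_2^2=1$ (by the Pythagorean form of the product metric on tangent spaces), the per-factor expansions $a_i(t)=\lambda_i t+c_i+o(1)$ follow from the existence of the Busemann limits on each factor, and the square-root expansion of $a_1(t)^2+a_2(t)^2=t^2+2(\lambda_1 c_1+\lambda_2 c_2)t+o(t)$ is routine; the degenerate case $v_i=0$ is also handled cleanly. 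What your approach buys is independence from the external reference (and it in effect reproves the cited example); what the paper's approach buys is brevity and a cleaner separation between the geometric fact (the unit case) and the formal scaling step. One small point of care: your argument implicitly uses the convention $B_{p_i,0}\equiv 0$, which is consistent with the paper's construction of $CM(\infty)$ as a cone, so there is no gap there.
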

			\begin{proof}
				A proof for the case where $\Vert v\Vert=1$ can be found in \cite[Example~8.24]{bridson2013metric}. The general result can be proved by combining this special result with the identity $B_{p,cv}=cB_{p,v}$.
			\end{proof}
			We can now prove the following lemma.
			\begin{lemma}
				\label{lem:Buse-horo}
				Closed horoballs in $M \times \mathbb{R}$ are classified into the following three types:
				\begin{enumerate}[label=\textup{(\roman*)}]
					\item Epigraph of a non-negatively scaled Busemann function.
					\item $H \times \mathbb{R}$, where $H$ is a closed horoball in $M$.
					\item Hypograph of a non-positively scaled Busemann function.
				\end{enumerate}
				Conversely, any subset of $M \times \mathbb{R}$ that falls into one of these classifications is a closed horoball in $M \times \mathbb{R}$.
			\end{lemma}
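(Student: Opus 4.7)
The plan is to combine Proposition~\ref{prop:prod-B-functions} with the explicit form of Busemann functions on $\mathbb{R}$. On $\mathbb{R}$ (a Euclidean space), the scaled Busemann function is simply $B_{r,s}(t) = s(t-r)$ for any $s \in \mathbb{R}$ (positive, negative, or zero). Combined with Proposition~\ref{prop:prod-B-functions}, this shows that every unit-scale Busemann function on $M \times \mathbb{R}$ takes the form
\[
B_{(p,r),(v,s)}(x,t) \;=\; B_{p,v}(x) + s(t-r), \qquad \|v\|^2 + s^2 = 1,
\]
so every closed horoball in $M \times \mathbb{R}$ is a sublevel set
$H_c = \{(x,t) : B_{p,v}(x) + s(t-r) \leq c\}$ for some such data $(p,v,r,s,c)$.

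I would then perform a case analysis on the sign of $s$, which will establish both directions of the classification simultaneously. If $s=0$, then $\|v\|=1$ and $H_c = H \times \mathbb{R}$ with $H = \{x : B_{p,v}(x) \leq c\}$ a closed horoball in $M$, giving case (ii). If $s<0$, then dividing by $|s|$ and rearranging yields $H_c = \{(x,t) : t \geq B_{p,v/|s|}(x) + a\}$ for some constant $a \in \mathbb{R}$, where $w := v/|s|$ has non-negative scaling; absorbing the constant $a$ into the basepoint (see the key observation below) realizes $H_c$ as the epigraph of a non-negatively scaled Busemann function, which is case (i). The case $s>0$ is symmetric and yields the hypograph of a non-positively scaled Busemann function, giving case (iii). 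Conversely, given any set of one of the three stated types, the same calculations run in reverse --- choosing the appropriate sign of $s$, direction $v$, basepoint and level $c$ --- realize the set as a closed horoball in $M \times \mathbb{R}$.

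The main obstacle is primarily notational: the natural rearrangement produces a scaled Busemann function \emph{plus a constant}, whereas the paper's convention $B_{p, rv}$ with $r \geq 0$ has the value zero at $p$. The key observation needed is the fact (recalled in Section~\ref{sec:pre3}) that two Busemann functions with the same gradient at some point differ by a constant along the associated geodesic; hence sliding the basepoint $p$ along the gradient flow of $B_{p,w}$ exactly absorbs any additive constant when $\|w\|>0$. The only edge case is the degenerate horoball with $v=0$, which produces a slab $M \times [a,\infty)$ or $M \times (-\infty,a]$ and fits case (i) or (iii) under the convention that constants correspond to the zero-scaled Busemann function. With these identifications in place, the classification is an elementary algebraic consequence of Proposition~\ref{prop:prod-B-functions} and the linearity of Busemann functions on $\mathbb{R}$.
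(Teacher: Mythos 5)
Your proposal is correct and follows essentially the same route as the paper: both decompose a scaled Busemann function on $M \times \mathbb{R}$ via Proposition~\ref{prop:prod-B-functions} into a scaled Busemann function on $M$ plus an affine function of the $\mathbb{R}$-coordinate, and then classify by the sign of the $\mathbb{R}$-component. Your treatment is in fact slightly more careful than the paper's, which only examines level-$0$ sublevel sets and leaves the absorption of additive constants (by sliding the basepoint along the gradient flow) and the degenerate slab case implicit.
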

			\begin{proof}
				We observe that, by Proposition~\ref{prop:prod-B-functions}, scaled Busemann functions $\bar{B}$ on $M\times\mathbb{R}$ can be expressed in one of the following three forms:
				\[
				\textup{(i) }\bar{B}((x,r))=B(x)-cr\textup{ with }c\geq0,\quad\textup{(ii) }\bar{B}((x,r))=B(x),\quad\textup{(iii) }\bar{B}((x,r))=B(x)+cr\textup{ with }c\geq0,
				\]
				where $B$ is some non-negatively scaled Busemann functions on $M$. For each of the three cases, the set $\{(x,r) \in M \times \mathbb{R} : \bar{B}((x,r)) \leq 0\}$ corresponds to type (i), (ii), or (iii), respectively. This proves the first statement. The second statement (converse) is straightforward.
			\end{proof}
			We now prove Theorem~\ref{thm:set_function}. We use the outer characterization of h-convex functions in Theorem~\ref{thm:envelope_rep_h-convex}. If $f$ is h-convex on $M$, then the epigraph of $f$ is h-convex in $M\times\mathbb{R}$ since it is the intersection of epigraphs of non-negatively scaled Busemann functions. Conversely, if the epigraph of $f$ is h-convex in $M\times\mathbb{R}$, then it is the intersection of the sets of the three types in the statement of Lemma~\ref{lem:Buse-horo}. The cases (ii) and (iii) do not happen, so the epigraph of $f$ is the intersection of epigraphs of non-negatively scaled Busemann functions. Thus, $f$ is the supremum of such non-negatively scaled Busemann functions. This completes the proof.
			
			\subsection{Equivalence of two definitions of horospherically convex sets}
			\label{app:first-order-as-supporting}

			In this section, we show that the first two definitions of h-convex sets, listed in Section~\ref{sec:h-geometry}, are equivalent. This result is well-known, as both definitions are widely used. For completeness, we provide a proof here.
			
			\subsubsection{First definition implies second definition (Supporting horosphere theorem)}
			
			The following lemma is immediate.
			\begin{lemma}
				\label{lem:seperating_horosphere}
				Let $K$ be a closed h-convex set in $M$ (in the sense of Definition~\ref{def:h-convex_set}), and $x \notin K$. Then, there is a closed horoball $H$ containing $K$ but not $x$.
			\end{lemma}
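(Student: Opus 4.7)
The plan is extremely short, since the statement is essentially a direct unpacking of Definition~\ref{def:h-convex_set}. The definition says that a closed h-convex set $K$ is, by fiat, an intersection of closed horoballs. So I would begin the proof by writing $K = \bigcap_{\alpha \in A} H_\alpha$ for some family $\{H_\alpha\}_{\alpha \in A}$ of closed horoballs in $M$. The hypothesis $x \notin K$ then says $x$ fails to lie in this intersection, so there is some index $\alpha_0 \in A$ with $x \notin H_{\alpha_0}$. Taking $H := H_{\alpha_0}$ gives a closed horoball that contains $K$ (since $K$ is the intersection of every $H_\alpha$, including $H_{\alpha_0}$) and does not contain $x$, as required.

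The only things to check carefully are the trivial edge cases: the index set $A$ cannot be empty, since otherwise the intersection convention would give $K = M$, contradicting $x \notin K$; and one must note that $K \subseteq H_{\alpha_0}$ follows tautologically from the intersection definition, no geometry of horoballs is needed. That is why the paper declares the lemma ``immediate.''

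There is no real obstacle. The lemma is a pure set-theoretic consequence of the chosen definition of h-convexity, and the entire proof is a two- or three-line unpacking. In particular, no appeal to supporting horospheres, to the Busemann function machinery, or to Hadamard-manifold geometry is needed at this step; those will become relevant only in the converse direction (second definition $\Rightarrow$ first definition) addressed later in Appendix~\ref{app:first-order-as-supporting}.
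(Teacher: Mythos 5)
Your proof is correct and matches the paper's reasoning exactly: the paper states the lemma is ``immediate'' precisely because it is the set-theoretic unpacking of Definition~\ref{def:h-convex_set} that you give. The edge-case remark about the empty index set is a sensible (if optional) touch.
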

			Using this lemma, we can now generalize the \emph{supporting hyperplane theorem} \cite[Thm.~18.8]{rockafellar1997convex}, \cite[Lem.~III.4.2.1]{hiriart1996convex}. The proof generalizes the standard proof technique used in the Euclidean case (see, for example, \cite[\S III.4]{hiriart1996convex}).
			\begin{theorem}[Supporting Horosphere Theorem]
				\label{thm:supporting_horosphere}
				Let $K$ be a closed h-convex set in $M$ (in the sense of Definition~\ref{def:h-convex_set}), and $x \in \bd(K)$. Then, there exists a closed horoball $H$ supporting $K$ at $x$ (i.e., $K \subseteq H$ and $x \in \bd(H)$).
			\end{theorem}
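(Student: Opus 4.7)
The plan is to mimic the classical proof of the supporting hyperplane theorem in $\mathbb{R}^n$: approximate $x$ by points outside $K$, separate each of these from $K$ by a closed horoball via Lemma~\ref{lem:seperating_horosphere}, and then extract a convergent subsequence of these separating horoballs whose limit is the desired supporting horoball at $x$. Concretely, since $x \in \bd(K)$, we can choose $y_n \notin K$ with $y_n \to x$, and for each $n$ obtain a closed horoball $H_n \supseteq K$ with $y_n \notin H_n$.

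To take a limit, I would reparametrize each $H_n$ using $x$ as the reference point. Every closed horoball can be written in the form $H_n = \{y \in M : B_{x,\xi_n}(y) \leq c_n\}$ for some $\xi_n \in M(\infty)$ and $c_n \in \mathbb{R}$, since shifting a geodesic ray in time simply shifts its Busemann function by a constant, and any two rays asymptotic to $\xi_n$ give Busemann functions differing by a constant. From $x \in K \subseteq H_n$ we get $B_{x,\xi_n}(x) = 0 \leq c_n$, while from $y_n \notin H_n$ combined with the $1$-Lipschitz property of Busemann functions we obtain
\[
c_n < B_{x,\xi_n}(y_n) \leq B_{x,\xi_n}(x) + d(x, y_n) = d(x, y_n) \to 0,
\]
so $c_n \to 0$. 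Since $M(\infty)$ is compact in the sphere topology (homeomorphic to the unit sphere in $T_x M$ via $\phi_x$), we can pass to a subsequence with $\xi_n \to \xi$ for some $\xi \in M(\infty)$.

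I now define $H = \{y \in M : B_{x,\xi}(y) \leq 0\}$ and claim it is the desired supporting horoball. The continuity statement in Lemma~\ref{lem:B-is-conti} (applied to unit-scale Busemann functions) gives $B_{x,\xi_n}(y) \to B_{x,\xi}(y)$ for every $y$; combined with $c_n \to 0$, any $y \in K$ satisfies $B_{x,\xi_n}(y) \leq c_n$ for all $n$, hence $B_{x,\xi}(y) \leq 0$ in the limit, so $K \subseteq H$. Moreover $B_{x,\xi}(x) = 0$ and $B_{x,\xi}$ has unit gradient everywhere, so the level set $\{B_{x,\xi} = 0\} = \bd(H)$ contains $x$, yielding $x \in \bd(H)$.

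The main potential obstacle is not extracting the limit (which follows from sphere-topology compactness) but rather ensuring that the reparametrization $H_n = \{B_{x,\xi_n} \leq c_n\}$ is legitimate and that the continuity invoked is the right one. This is handled by the two facts that any closed horoball is a sublevel set of some unit-scale Busemann function and that two asymptotic Busemann functions differ by a constant (stated in Section~\ref{sec:pre3}), together with Lemma~\ref{lem:B-is-conti}. Everything else is routine; no strongly h-convex machinery or smoothness is required.
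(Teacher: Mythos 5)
Your proof is correct and follows essentially the same route as the paper's: approximate $x$ by exterior points, separate each from $K$ with a horoball via Lemma~\ref{lem:seperating_horosphere}, use sequential compactness of $M(\infty)$ in the sphere topology to extract a convergent subsequence of directions, and pass to the limit using continuity of the Busemann map. The only (immaterial) difference is your choice of normalization: you base all Busemann functions at the fixed point $x$ and track the offsets $c_n \to 0$, whereas the paper bases each at the moving exterior point $x_k$ and invokes joint continuity in $(x,\xi,y)$.
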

			\begin{proof}
				Since $\bd(K)$ is the boundary of $M\backslash K$, there exists a sequence $\{x_k\}$ in $M$ such that $x_k \notin K$ and $x_k\to x$ as $k\to \infty$. Then, for each $k$, by Lemma~\ref{lem:seperating_horosphere}, there exists $\xi_k \in M(\infty)$ such that $B_{x_{k},\xi_{k}}(y)\leq 0$ for all $y\in K$. Since $M(\infty)$ is sequentially compact (as it is homeomorphic to the unit sphere in any tangent space of $M$ under the homeomorphism given in Appendix~\ref{app:dual_space}), we can extract a subsequence of $\{\xi_{k}\}$ that converges to some limit $\xi \in M(\infty)$. Since the map $(x,\xi,y)\mapsto B_{x,\xi}(y)$ is continuous \cite[\S II.1]{ballmann2012lectures}, taking the limit $k\to\infty$ in $B_{x_{k},\xi_{k}}(y)\leq 0$, we obtain $B_{x,\xi}(y)\leq 0$. Therefore, the closed horoball $\{y\in M:B_{x,\xi}(y)\leq 0\}$ supports $K$ at $x$.
			\end{proof}
			Therefore, we conclude that if a closed set $K$ is h-convex in the first sense, then it is also h-convex in the second sense.

			\subsubsection{Second definition implies first definition (Partial converse of supporting horosphere theorem)}

			In this subsection, we provide a proof of the following theorem,  often referred to as the \emph{converse supporting hyperplane theorem} when $M=\mathbb{R}^n$ \cite[p.63]{boyd2004convex}. The proof generalizes the well-known technique in the Euclidean setting (see, for example, Israel's post on Stack Exchange \cite{stack2011}).
			\begin{theorem}
				Let $K$ be a closed set with a nonempty interior in $M$. Suppose that $K$ has a supporting horosphere at every point in its boundary. Then, $K$ is h-convex (in the sense of Definition~\ref{def:h-convex_set}).
			\end{theorem}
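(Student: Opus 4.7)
The plan is to prove the contrapositive: for any $x \notin K$, we exhibit a closed horoball that contains $K$ but not $x$. Once this is done, $K$ equals the intersection of all closed horoballs containing it, which is exactly h-convexity in the sense of Definition~\ref{def:h-convex_set}.

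Pick an interior point $y \in \inte(K)$, which exists by hypothesis, and let $\gamma \colon [0,1] \to M$ be the unique geodesic from $y$ to $x$ (uniqueness follows from the Cartan--Hadamard theorem). Set $t^{*} = \sup\{t \in [0,1] : \gamma(t) \in K\}$. Since $K$ is closed, the supremum is achieved, so $z := \gamma(t^{*}) \in K$; and since $x \notin K$, we have $t^{*} < 1$. For $t \in (t^{*}, 1]$ the point $\gamma(t)$ lies outside $K$, so $z$ is a limit of points in $M \setminus K$ and hence $z \in \bd(K)$.  By the hypothesis there exists a closed horoball $H = \{u \in M : B(u) \leq 0\}$ supporting $K$ at $z$, where $B$ is (a translate of) a unit-scale Busemann function, so that $K \subseteq H$ and $B(z) = 0$.

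The crux is to show $x \notin H$. Consider the real-valued function $\phi(t) = B(\gamma(t))$ on $[0,1]$. It is convex since $B$ is geodesically convex on $M$ and $\gamma$ is a geodesic, and it satisfies $\phi(t^{*}) = 0$. We claim $\phi(0) < 0$ strictly. Indeed $y \in \inte(K) \subseteq \inte(H)$, because $K \subseteq H$ implies any neighbourhood of $y$ in $K$ is a neighbourhood in $H$; and $\inte(H) = \{B < 0\}$, since the inclusion $\{B<0\} \subseteq \inte(H)$ is by continuity, while the reverse inclusion holds because $\|\nabla B\| = 1$ everywhere (Section~\ref{sec:pre3}), preventing $B$ from attaining a local extremum on the interior of a sublevel set. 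Monotonicity of secant slopes for the convex function $\phi$ gives, for any $t \in (t^{*}, 1]$,
\[
\frac{\phi(t^{*}) - \phi(0)}{t^{*}} \;\leq\; \frac{\phi(t) - \phi(0)}{t},
\]
which rearranges to $\phi(t) \geq \phi(0)\bigl(1 - t/t^{*}\bigr) > 0$, using $\phi(0) < 0$ and $1 - t/t^{*} < 0$. Taking $t = 1$ yields $B(x) = \phi(1) > 0$, i.e., $x \notin H$, as required.

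The only mildly delicate point is the final strict inequality, which relies on strict negativity of $\phi(0)$ and the secant-slope characterization of convexity; identifying $\inte(H) = \{B<0\}$ via the non-vanishing gradient of $B$ is the one step that has no direct Euclidean counterpart (there, one would simply invoke the fact that a half-space has a non-degenerate interior). Both steps are short given the basic properties of Busemann functions recalled in Section~\ref{sec:pre3}.
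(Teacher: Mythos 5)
Your proposal is correct and follows essentially the same route as the paper's proof: pick an interior point, walk along the geodesic toward the external point, take the supporting horoball at the last boundary crossing, and use geodesic convexity of the Busemann function along that geodesic to push the external point strictly outside the horoball. You simply make explicit two steps the paper leaves implicit, namely that $B<0$ at interior points of $K$ (via $\|\nabla B\|=1$) and the secant-slope computation giving $B(x)>0$.
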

			
			\begin{proof}
				Let $K'$ be the intersection of all supporting closed horoballs of $K$. It suffices to show that $K=K'$. It is clear that $K\subseteq K'$. To show $K'\subseteq K$, we need to show that if $y\notin K$, then $y\notin K'$. 
				
				Suppose $y\notin K$. Let $x$ be any point in $\inter K$. Consider the geodesic segment $\gamma_{x}^{y} : [0,1] \to M$ from $x$ to $y$. Let $t=\sup\{t\in[0,1]:\gamma_{x}^{y}(t)\in K\}$. Then, we have $t\in (0,1)$. Let $z=\gamma_{x}^{y}(t)$. Then, $z\in \bd(K)$. By the assumption of the theorem statement, there is a supporting horosphere of $K$ at $z$. This means that there is a Busemann function $B$ such that $B\leq 0$ on $K$ and $B(z)=0$. Since $x\in \inter K$, we have $B(x)<0$. Using the fact that the Busemann function $B$ is g-convex, we have $B(y)>0$, which shows that $y\notin K'$. This completes the proof.
			\end{proof}
			Therefore, we conclude that if a closed set $K$ is h-convex in the second sense, then it is also h-convex in the first sense.
			
			\subsection{Geodesic convexity is a local property but horospherical convexity is not}
			\label{app:convexity_local_property}
			
			In this subsection, we present a counterexample: a function $f$ that is locally h-convex but not (globally) h-convex on $M$. Let $M$ be the hyperbolic plane $\mathbb{H}^2$. Let $p\in M$. Let $u_1$ and $u_2$ be two mutually perpendicular unit tangent vectors in $T_p M$. We consider three scaled Busemann functions: $B_{1}=B_{p,2u_{1}}$, $B_{2}=B_{p,4u_{2}}$, and ${B}_3=B_{p,u_{1}+2u_{2}}$. We define $\hat{B}=\max\{B_{1},B_{2}\}$, which is h-convex since it is the supremum of two scaled Busemann functions.
			
			Using the \Poincare disk model $D=\{z\in\mathbb{C}:|z|<1\}$ (see Appendix~\ref{sec:h2}), with the identifications $p\leftrightarrow 0\in D$, $\exp_p(-\infty u_1)\leftrightarrow 1\in \bd(D)$, and $\exp_p(-\infty u_2)\leftrightarrow i\in \bd(D)$, these functions can be written as follows:
			\begin{equation}
				\label{eq:b1b2b3}
				\begin{aligned}
					B_{1}(z) & =-2\log\left(\frac{1-|z|^{2}}{|z-1|^{2}}\right),\\
					B_{2}(z) & =-4\log\left(\frac{1-|z|^{2}}{|z-i|^{2}}\right),\\
					B_3 (z) & =-\sqrt{5}\log\left(\frac{1-|z|^{2}}{\left|z-\left(\frac{1}{\sqrt{5}}+\frac{2}{\sqrt{5}}i\right)\right|^{2}}\right).
				\end{aligned}
			\end{equation}
			
			Let $\epsilon=0.1$. Consider the set 
			\[
			\{z:B_{3}(z)+\epsilon\geq\hat{B}(z)\}.
			\]
			This set is closed because it is the inverse image of a closed set under a continuous map. This set has at least two connected components, one containing $0\in D$ (denote this set as $C$), and one whose boundary contains the ideal point $i$ (denote this set as $C'$). We verified the existence of the first set $C$ numerically (see Figure~\ref{fig:local}). The existence of the second set follows from Proposition~\ref{prop:will-used-counter} applied to $\xi=i\in \bd(D)$, and using that $\sqrt{5} > 2$. 
			
			Define a function $f$ on $M$ as $f=B_3+\epsilon$ on $C$ and $f=\hat{B}$ on $M\backslash C$. Then, we have $\grad f(p) = \grad B_3 (p)=u_1+2u_2$. Thus, if $f$ is (globally) h-convex, then the inequality
			\[
			f(x)\geq f(p)+B_{p,\nabla f(p)}(x)=\epsilon+B_{3}(x)
			\]
			should hold for all $x\in \mathbb{H}^2$. However, if $\gamma$ is the geodesic starting at $0$ and proceeding toward the ideal point $i$, then we have $f(\gamma(t))=\hat{B}(\gamma(t))$ and ${B}_3 (\gamma(t)) > \hat{B}(\gamma(t))$ for sufficiently large $t$ (where $\gamma(t)\in C'$). Thus, $f$ is not h-convex. However, $f$ is locally h-convex since (i) it is the supremum of two scaled Busemann functions $\max\{B_1, B_2\}$ on $M \setminus C$, and (ii) it is the supremum of three scaled Busemann functions $\max\{B_1, B_2, B_3 + \epsilon\}$ on a neighborhood of $C$.
			This shows that h-convexity is not a local property. A Python code for generating Figure~\ref{fig:local} is available at
			\begin{center}
				\url{https://github.com/jungbinkim1/Horospherical-Convexity}.
			\end{center}
			
			\begin{figure}
				\centering
				\includegraphics[width=0.5\textwidth]{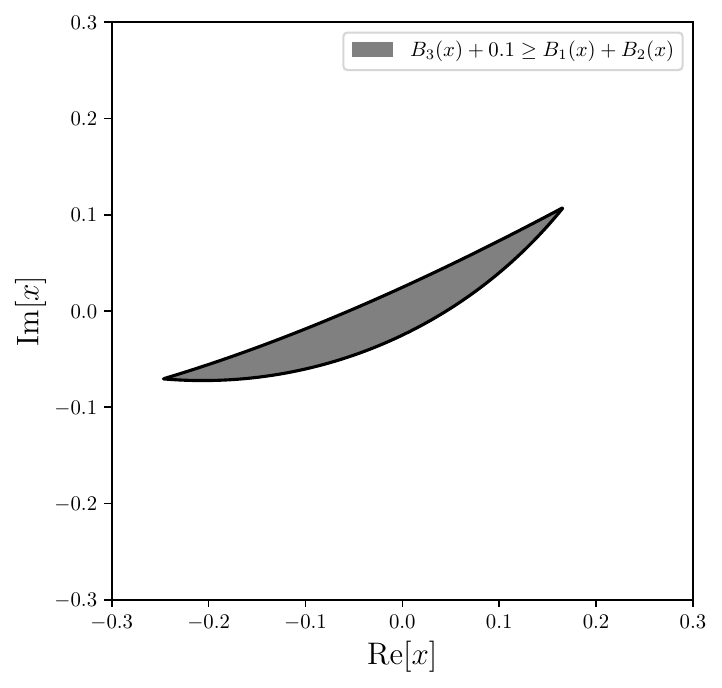}
				\caption{The set $C$ is shaded gray.}
				\label{fig:local}
			\end{figure}
			
			\subsection{Geodesic subdifferential is a local concept but horospherical subdifferential is not}
			\label{app:subdifferential_local_concept}
			
			In this subsection, we show that the following statement is false:
			\begin{center}
				If $f_1$ and $f_2$ are equal in some neighborhood $U \subseteq M$ of $x$, then $\ph f_1(x) = \ph f_2(x)$.
			\end{center}
			Consider the functions $B_1$, $B_2$, $B_3$, $\hat{B}$, and $f$ from the previous subsection. Define a function $\tilde{f} = \max\{B_3 + \epsilon, \hat{B}\}$. Then, $f$ and $\tilde{f}$ are equal on $C$, but their subdifferentials at $p$ differ, as $\ph f(p) = \emptyset$ and $\ph \tilde{f}(p) = \{u_1 + 2u_2\}$. This serves as a counterexample.
			
			\subsection{Some h-subgradients can be computed using only local information of $f$}
			\label{app:subg_oracle}

			When implementing the subgradient descent method in Section~\ref{sec:subg-des}, the following question arises:
			\begin{center}
				Q. {\it Is it reasonable to assume that the algorithm can obtain information about $f$ through queries to the subgradient oracle that outputs an element of $\partial f(x)$ for given $x \in M$?}
			\end{center}
			One might expect a negative answer since the set $\partial f(x)$ is not fully determined by the local behavior of $f$ (see Appendix~\ref{app:subdifferential_local_concept}). However, we provide a positive answer here.
			\begin{center}
				A. {\it When $f$ is h-convex, although it may not be possible to obtain the entire set $\ph f(x)$, one can obtain an element of $\ph f(x)$ using only the local information of $f$ near $x$.}
			\end{center}
			\begin{proof}
				Since $f$ is g-convex, it is locally Lipschitz continuous by \cite[Cor.~3.10]{udriste1994convex}. This implies that it is differentiable almost everywhere by a generalization of Rademacher's theorem in \cite[Thm.~5.7]{azagra2005nonsmooth}. Consider a sequence $x_k$ such that $f$ is differentiable at each $x_k$ and $x_k \to x$. By the local Lipschitz continuity of $f$, the norms of $\grad f(x_k)$ are bounded above by some constant $C$. Thus, the sequence $\{\grad f(x_k)\}$ is in a sequentially compact subset of $CM(\infty)$ (namely, the image of $\bar{B}(0,C)\subseteq T_{p}M$ under the homeomorphism from $T_p M$ to $CM(\infty)$ constructed in Appendix~\ref{app:prelim} for any $p\in M$), where we identified each $T_{x_k}M$ with $CM(\infty)$. Thus, we can extract a subsequence that converges to some $\omega\in CM(\infty)$. Taking the limit $k\to\infty$ in the inequality $f(y)\geq f(x_{k})+B_{x_{k},\nabla f(x_{k})}(y)$, we have $f(y)\geq f(x)+B_{x,\omega}(y)$ by Lemma~\ref{lem:B-is-conti}. Thus, we obtained an element $\omega$ of $\ph f(x)$, where we identified $T_x M$ and $CM(\infty)$.
			\end{proof}

			\section{Algorithms: Details for Section~\ref{sec:h-opt}}
			
			\subsection{Proof of Proposition~\ref{prop:d-is-conv}: Distance functions are h-convex} 
			\label{squareddistancefunctionsareBconvex}
			
			Recall that a sublevel set of a Busemann function is called a closed horoball. Its interior (an open horoball) can be expressed as a union of open geodesic balls \cite[p.23]{ballmann1985}. Precisely, if $z$ is on the geodesic $t \mapsto \exp_x(tv)$, where $t \in \reals$ and $\|v\|=1$, then we have
			\begin{align} \label{djdjkdnk}
				\{w \in \calM : B_{x, v}(w) < B_{x, v}(z)\} = \bigcup_{t > 0} B(\exp_z(-t \Gamma_x^z v), t),
			\end{align}
			where $\Gamma_x^z$ denotes the parallel transport from $x$ to $z$ along the geodesic $\gamma$.
			
			We now prove the proposition. For $x,p\in M$, let $\gamma_{x,p}$ be the unit-speed geodesic with $\gamma_{x,p}(0)=x$ and $\gamma_{x,p}(d(x,p))=p$. Define $\xi(x,p)\in M(\infty)$ as $\xi(x,p)=\gamma_{x,p}(\infty)$. We now prove the proposition. By Theorem~\ref{thm:envelope_rep_h-convex}, it suffices to show that the following equality holds for all $y \in \calM$:
			\begin{align} \label{dkbkjdb}
				d(y,p)=\sup_{x\in M}\bigg\{ B_{x,\xi(x,p)}(y)+d(x,p)\bigg\}.
			\end{align}
			
			\emph{Step} 1. We claim that it suffices to show that
			\begin{align} \label{ewwje}
				B_{x,\xi(x,p)}(y)+d(x,p)\leq d(y,p)\textup{ for all }x,y\in M.
			\end{align}
			to prove the equality \eqref{dkbkjdb}. Assume that \eqref{ewwje} is true. Then, we have
			\[
			\sup_{x\in M}\bigg\{ B_{x,\xi(x,p)}(y)+d(x,p)\bigg\}\leq d(y,p).
			\]
			Putting $x=y$ shows that this inequality actually becomes an equality. Thus, \eqref{dkbkjdb} follows from~\eqref{ewwje}.
			
			\emph{Step} 2. We now show~\eqref{ewwje}.  Fix $x$ and $y$. Let $z$ be the point where $\gamma_{x,p}$ intersects the horosphere $\{q \in \calM : B_{x, \xi(x,p)}(q) = B_{x, \xi(x,p)}(y)\}$. Then, we have
			\[
			B_{x,\xi(x,p)}(y)+d(x,p)=B_{x,\xi(x,p)}(z)+d(x,p)=d(z,p),
			\]
			where the last equality follows from the fact that $B_{x,\xi(x,p)}(z)$ is the signed distance between $x$ and $z$, and that the three points $x$, $p$, and $z$ are collinear. Lastly,~\eqref{djdjkdnk} implies that the open geodesic ball $B(p, d(z, p))$ is a subset of the open horoball 
			\begin{align} \label{kdjbkdj}
				\{w\in\calM:B_{x,\xi(x,p)}(w) < B_{x,\xi(x,p)}(y)=B_{x,\xi(x,p)}(z)\}.
			\end{align}
			This implies $d(z, p) \leq d(y, p)$, because $y$ is on the boundary of the horoball~\eqref{kdjbkdj} and so cannot be contained in of $B(p, d(z, p))$.  We conclude that
			$$B_{x, \xi(x,p)}(y) + d(x, p) = d(z, p) \leq d(y, p),$$
			showing the claim \eqref{ewwje}.
			
			\subsection{Proof of Theorem~\ref{thm:gd-c}: Gradient descent for smooth h-convex functions}
			\label{app:thm:gd-c}
			
			Using the h-convexity of each $f_{i}$, we have
			\begin{equation}
				\label{eq:gd-c-ineq1}
				\begin{aligned}
					f(x_{k})-f(x^{*}) & =\frac{1}{m}\sum_{i=1}^{m}\left(f_{i}(x_{k})-f_{i}(x^{*})\right) \leq-\frac{1}{m}\sum_{i=1}^{m}B_{x_{k},\grad f_{i}(x_{k})}(x^{*}).
				\end{aligned}
			\end{equation}
			Using $L$-h-smoothness of each $f_i$ yields
			\begin{align}\label{eq:gd-c-ineq2}
				f(x_{k+1}) \leq f(x_k) + \frac{1}{m}\sum_{i=1}^{m}Q_{x_{k},\grad f_{i}(x_{k})}^L\left(x_{k+1}\right)
				= f(x_k) + \min_{x \in M}\bigg\{\frac{1}{m}\sum_{i=1}^{m}Q_{x_{k},\grad f_{i}(x_{k})}^L\left(x\right)\bigg\},
			\end{align}
			where the last equality comes from the definition of $x_{k+1}$. 
			
			The function $\varphi(x)=\frac{1}{m}\sum_{i=1}^{m}Q_{x_{k},\grad f_{i}(x_{k})}^L\left(x\right)$
			is $L$-strongly g-convex and minimized at $x_{k+1}$, so 
			$$\varphi(x^*) 
			= \frac{1}{m}\sum_{i=1}^{m}Q_{x_{k},\grad f_{i}(x_{k})}^L\left(x^*\right) 
			\geq \frac{L}{2} d(x^*, x_{k+1})^2 + \min_{x \in M}\bigg\{\frac{1}{m}\sum_{i=1}^{m}Q_{x_{k},\grad f_{i}(x_{k})}^L\left(x\right)\bigg\}.$$
			On the other hand Lemma~\ref{lem:horotriangle} implies
			$$\varphi(x^*) 
			= \frac{1}{m}\sum_{i=1}^{m}Q_{x_{k},\grad f_{i}(x_{k})}^L\left(x^*\right) 
			\leq  \frac{1}{m}\sum_{i=1}^{m}B_{x_{k},\grad f_{i}(x_{k})}\left(x^*\right) + \frac{L}{2} d(x^*, x_{k})^2.$$
			Combining these last two inequalities yields
			\begin{align}\label{eq:gd-c-ineq3}
				\frac{1}{m}\sum_{i=1}^{m}B_{x_{k},\grad f_{i}(x_{k})}\left(x^*\right) - \min_{x \in M}\bigg\{\frac{1}{m}\sum_{i=1}^{m}Q_{x_{k},\grad f_{i}(x_{k})}^L\left(x\right)\bigg\}
				\geq
				\frac{L}{2}\Big(d(x^*, x_{k+1})^2 - d(x^*, x_k)^2\Big).
			\end{align}
			
			Defining the energy function
			$\mathcal{E}_{k}=k\left(f(x_{k})-f(x^{*})\right)+\frac{L}{2}d(x_{k},x^{*})^{2},$
			we have
			\begin{align*}
				\mathcal{E}_{k+1}-\mathcal{E}_{k} & =(k+1)\left(f(x_{k+1})-f(x_{k})\right) + (f(x_{k})-f(x^{*}))\\
				& \quad+\frac{L}{2}\left(d(x_{k+1},x^{*})^{2}-d(x_{k},x^{*})^{2}\right).
			\end{align*}
			Using the inequalities \eqref{eq:gd-c-ineq1}, \eqref{eq:gd-c-ineq2}, and \eqref{eq:gd-c-ineq3}, we have
			\begin{align*}
				\mathcal{E}_{k+1}-\mathcal{E}_{k} & \leq(k+1) \min_{x\in M}\bigg\{\sum_{i=1}^{m}Q_{x_{k},\grad f_{i}(x_{k})}^L\left(x\right)\bigg\} -\frac{1}{m}\sum_{i=1}^{m}B_{x_{k},\grad f_{i}(x_{k})}(x^{*})2\\
				& \quad+\frac{1}{m}\sum_{i=1}^{m}B_{x_{k},\grad f_{i}(x_{k})}(x^{*})-\min_{x\in M}\bigg\{\sum_{i=1}^{m}Q_{x_{k},\grad f_{i}(x_{k})}^L\left(x\right)\bigg\}\\
				& =k \min_{x\in M}\bigg\{\sum_{i=1}^{m}Q_{x_{k},\grad f_{i}(x_{k})}^L\left(x\right)\bigg\}\\
				& \leq0,
			\end{align*}
			where the last inequality is because $Q_{x_k, \nabla f_i(x_k)}^L(x_k) = 0$.
			Thus, $\mathcal{E}_{k}$ is non-increasing. Hence, we have
			\[
			f(x_{N})-f(x^{*})\leq\frac{1}{N}\mathcal{E}_{N}\leq\frac{1}{N}\mathcal{E}_{0}=\frac{L}{2N}d(x_{0},x^{*})^{2}.
			\]
			
			\subsection{Proof of Theorem~\ref{thm:gd-sc}: Gradient descent for smooth strongly h-convex functions}
			\label{app:thm:gd-sc}
			
			\subsubsection{Proof of~\eqref{eq:second_result}}
			First, let us prove~\eqref{eq:second_result}.
			For any $x \in M$, strong h-convexity of each $f_i$ implies
			\[
			f(x)=\frac{1}{m}\sum_{i=1}^{m}f_{i}(x)\geq\frac{1}{m}\sum_{i=1}^{m}\left\{ f_{i}(x_{k})+Q_{x_{k},\nabla f_{i}(x_{k})}^{\mu}(x)\right\} .
			\]
			Minimizing both sides of this inequality yields
			$$f^* \geq f(x_k) + \min_{x \in M} \bigg\{\frac{1}{m}\sum_{i=1}^m Q_{x_k, \nabla f_i(x_k)}^{\mu}(x)\bigg\}.$$
			Rearranging yields the following ``Polyak-\L ojasiewicz inequality''\footnote{Indeed, when $M = \mathbb{R}^d$, the right hand side is simply $\frac{1}{2\mu}\|\nabla f(x_k)\|^2$.}
			\begin{align}\label{eq:PL1}
				f(x_k) - f^* \leq - \min_{x \in M} \bigg\{\frac{1}{m}\sum_{i=1}^m Q_{x_k, \nabla f_i(x_k)}^{\mu}(x)\bigg\}.
			\end{align}
			On the other hand, $L$-h-smoothness of each $f_i$ implies
			$$f(x_{x_{k+1}}) \leq f(x_k) + \frac{1}{m}\sum_{i=1}^m Q_{x_k, \nabla f_i(x_k)}^{L}(x_{k+1})
			\stackrel{(1)}{=} f(x_k) + \min_{x \in M}\bigg\{\frac{1}{m}\sum_{i=1}^m Q_{x_k, \nabla f_i(x_k)}^{L}(x)\bigg\}.$$
			The equality $\stackrel{(1)}{=}$ follows from the definition~\eqref{eq:gd} of $x_{k+1}$.
			Rearranging yields
			\begin{align}\label{eq:PL2}
				- \min_{x \in M}\bigg\{\frac{1}{m}\sum_{i=1}^m Q_{x_k, \nabla f_i(x_k)}^{L}(x)\bigg\} \leq f(x_k) - f(x_{k+1}).
			\end{align}
			Lemma~\ref{prop:nonincreasing} implies
			\begin{align}\label{eq:PL3}
				- \frac{\mu}{L}\min_{x \in M} \bigg\{\frac{1}{m}\sum_{i=1}^m Q_{x_k, \nabla f_i(x_k)}^{\mu}(x)\bigg\} \leq - \min_{x \in M}\bigg\{\frac{1}{m}\sum_{i=1}^m Q_{x_k, \nabla f_i(x_k)}^{L}(x)\bigg\},
			\end{align}
			from which we conclude (combining inequalities~\eqref{eq:PL1},~\eqref{eq:PL2},~\eqref{eq:PL3})
			$$\frac{\mu}{L}(f(x_k) - f^*) \leq f(x_k) - f(x_{k+1}).$$
			Rearranging yields $f(x_{k+1}) - f^* \leq (1-\frac{\mu}{L})(f(x_{k}) - f^*)$, which implies the claimed result.
			
			\subsubsection{Proof of~\eqref{eq:first_result}}
			Second, let us prove~\eqref{eq:first_result}.
			Using the $\mu$-strong h-convexity of each $f_{i}$, we have
			\begin{equation}
				\label{eq:gd-sc-ineq1}
				\begin{aligned}
					f(x_{k})-f(x^{*}) & =\frac{1}{m}\sum_{i=1}^{m}\left(f_{i}(x_{k})-f_{i}(x^{*})\right) \leq-\frac{1}{m}\sum_{i=1}^{m}Q_{x_{k},\grad f_{i}(x_{k})}^{\mu}(x^{*}).
				\end{aligned}
			\end{equation}
			Using $L$-h-smoothness of each $f_i$ yields
			\begin{align}\label{eq:gd-sc-ineq2}
				f(x_{k+1}) \leq f(x_k) + \frac{1}{m}\sum_{i=1}^{m}Q_{x_{k},\grad f_{i}(x_{k})}^L\left(x_{k+1}\right)
				= f(x_k) + \min_{x \in M}\bigg\{\frac{1}{m}\sum_{i=1}^{m}Q_{x_{k},\grad f_{i}(x_{k})}^L\left(x\right)\bigg\},
			\end{align}
			where the last equality comes from the definition of $x_{k+1}$. 
			
			The function $\varphi(x)=\frac{1}{m}\sum_{i=1}^{m}Q_{x_{k},\grad f_{i}(x_{k})}^L\left(x\right)$
			is $L$-strongly g-convex and minimized at $x_{k+1}$, so 
			$$\varphi(x^*) 
			= \frac{1}{m}\sum_{i=1}^{m}Q_{x_{k},\grad f_{i}(x_{k})}^L\left(x^*\right) 
			\geq \frac{L}{2} d(x^*, x_{k+1})^2 + \min_{x \in M}\bigg\{\frac{1}{m}\sum_{i=1}^{m}Q_{x_{k},\grad f_{i}(x_{k})}^L\left(x\right)\bigg\}.$$
			On the other hand Lemma~\ref{lem:horotriangle} implies
			$$\varphi(x^*) 
			= \frac{1}{m}\sum_{i=1}^{m}Q_{x_{k},\grad f_{i}(x_{k})}^L\left(x^*\right) 
			\leq  \frac{1}{m}\sum_{i=1}^{m}Q_{x_{k},\grad f_{i}(x_{k})}^\mu\left(x^*\right) + \frac{L-\mu}{2} d(x^*, x_{k})^2.$$
			Combining these last two inequalities yields
			\begin{align}\label{eq:gd-sc-ineq3}
				\frac{1}{m}\sum_{i=1}^{m}Q_{x_{k},\grad f_{i}(x_{k})}^\mu\left(x^*\right) - \min_{x \in M}\bigg\{\frac{1}{m}\sum_{i=1}^{m}Q_{x_{k},\grad f_{i}(x_{k})}^L\left(x\right)\bigg\}
				\geq
				\frac{L}{2}d(x^*, x_{k+1})^2 - \frac{L-\mu}{2}d(x^*, x_k)^2.
			\end{align}
			
			Defining the energy function
			$\mathcal{E}_{k}=\left(1-\mu/L\right)^{-k}\left(f(x_{k})-f(x^{*})+\frac{\mu}{2}d(x_{k},x^{*})^{2}\right),$
			we have
			\begin{align*}
				\left(1-\mu/L\right)^{k+1}\left(\mathcal{E}_{k+1}-\mathcal{E}_{k}\right) & =f\left((x_{k+1})-f(x_{k})\right)+\frac{\mu}{L}\left(f(x_{k})-f(x^{*})\right)\\
				& \quad+\frac{\mu}{2}\left(d(x_{k+1},x^{*})^{2}-\left(1-\frac{\mu}{L}\right)d(x_{k},x^{*})^{2}\right).
			\end{align*}
			Using the inequalities \eqref{eq:gd-sc-ineq1}, \eqref{eq:gd-sc-ineq2}, and \eqref{eq:gd-sc-ineq3}, we have
			\begin{align*}
				\left(1-\mu/L\right)^{k+1}\left(\mathcal{E}_{k+1}-\mathcal{E}_{k}\right) & \leq \min_{x \in M}\bigg\{\frac{1}{m}\sum_{i=1}^{m}Q_{x_{k},\grad f_{i}(x_{k})}^L\left(x\right)\bigg\} -\frac{\mu}{L}\frac{1}{m}\sum_{i=1}^{m}Q_{x_{k},\grad f_{i}(x_{k})}^{\mu}(x^{*})\\
				& \quad+\frac{\mu}{L}\Bigg(\frac{1}{m}\sum_{i=1}^{m}Q_{x_{k},\grad f_{i}(x_{k})}^\mu\left(x^*\right) - \min_{x \in M}\bigg\{\frac{1}{m}\sum_{i=1}^{m}Q_{x_{k},\grad f_{i}(x_{k})}^L\left(x\right)\bigg\}\Bigg)\\
				& = \left(1-\frac{\mu}{L}\right) \min_{x \in M}\bigg\{\frac{1}{m}\sum_{i=1}^{m}Q_{x_{k},\grad f_{i}(x_{k})}^L\left(x\right)\bigg\}\\
				& \leq0,
			\end{align*}
			where the last inequality follows from $Q_{x_{k},\grad f_{i}(x_{k})}^L\left(x_k\right) = 0$. 
			Thus, $\mathcal{E}_{k}$ is non-increasing. Hence, we have
			\begin{align*}
				f(x_{N})-f(x^{*})+\frac{\mu}{2}d(x_N,x^*)^2 & =\left(1-\frac{\mu}{L}\right)^{N}\mathcal{E}_{N}\leq\left(1-\frac{\mu}{L}\right)^{N}\mathcal{E}_{0}\\
				& =\left(1-\frac{\mu}{L}\right)^{N}\left(f(x_{0})-f(x^{*})+\frac{\mu}{2}d(x_{0},x^{*})^{2}\right).
			\end{align*}
			
			\subsection{Proof of Lemma~\ref{prop:nonincreasing}}
			\label{app:prop:nonincreasing}
			
			Define a function $f \colon M \times (0, \infty) \to \reals$ by
			\[
			f(x, L) = \frac{1}{m} \sum_{i=1}^m L^2 d\Big(x, \exp_y\big(-\frac{1}{L} v_{i}\big)\Big)^2,
			\]
			so that $h(L)=\min_{x\in M} f(x,L)$. We also denote $f_L(x)=f(x,L)$. Let $x^*(L) = \argmin_{x \in M} f_L(x)$, which uniquely exists due to the strong g-convexity of $f_L$. Additionally, since $\nabla_x^2 f_L(x) \succ 0$, the implicit function theorem (applied to the equation $\nabla f_L(x)=0$ at pairs $(x^*(L), L)$, $L>0$) implies that $L \mapsto x^*(L)$ is a differentiable map. 
			We then have
			\[
			\frac{d}{dL}h(L)=\frac{d}{dL}\left\{ f(x^{*}(L),L)\right\} =\left\langle \nabla f_{L}(x^{*}(L)),(x^{*})'(L)\right\rangle +\frac{\partial f}{\partial L}(x^{*}(L),L)=\frac{\partial f}{\partial L}(x^{*}(L),L),
			\]
			where we used the chain rule for the second equality, and the fact that $x^*(L)$ minimizes $f_L$ for the last equality.
			
			Make the reparameterization $L = 1/t$ (so $\frac{dt}{dL} = -1/L^{2}$), and define $\gamma_i(t) = \exp_{y}(-t v_i)$.
			We then have (abbreviating $x^* = x^*(L)$)
			\begin{align*}
				\frac{\partial}{\partial L}f(x^{*}(L),L) & =\frac{dt}{dL}\frac{\partial}{\partial t}f(x^{*},1/t)=-\frac{1}{L^{2}}\frac{1}{m}\sum_{i=1}^{m}\frac{\partial}{\partial t}\left\{ \frac{1}{t^{2}}d(x^{*},\gamma_{i}(t))^{2}\right\} \\
				& =-\frac{1}{L^{2}}\frac{1}{m}\sum_{i=1}^{m}\left\{ -\frac{2}{t^{3}}d(x^{*},\gamma_{i}(t))^{2}-\frac{2}{t^{2}}\langle\gamma_{i}'(t),\exp_{\gamma_{i}(t)}^{-1}(x^{*})\rangle\right\} \\
				& =\frac{1}{m}\sum_{i=1}^{m}\left\{ 2Ld(x^{*},\gamma_{i}(t))^{2}+2\langle\gamma_{i}'(t),\exp_{\gamma_{i}(t)}^{-1}(x^{*})\rangle\right\} \\
				& =\frac{1}{m}\sum_{i=1}^{m}\left\{ Ld(x^{*},\gamma_{i}(t))^{2}+\left(Ld(x^{*},\gamma_{i}(t))^{2}-2L\langle\exp_{\gamma_{i}(t)}^{-1}(y),\exp_{\gamma_{i}(t)}^{-1}(x^{*})\rangle\right)\right\}  \\
				& \stackrel{(1)}{\leq}\frac{1}{m}\sum_{i=1}^{m}\left\{ Ld(x^{*},\gamma_{i}(t))^{2}+Ld(x^{*},y)^{2}-Ld(\gamma_{i}(t),y)^{2}\rangle\right\} \\
				& \stackrel{(2)}{\leq}\frac{1}{m}\sum_{i=1}^{m}2L\langle\exp_{x^{*}}^{-1}(\gamma_{i}(t)),\exp_{x^{*}}^{-1}(y)\rangle=\frac{2L}{m}\left\langle \sum_{i=1}^{m}\exp_{x^{*}}^{-1}(\gamma_{i}(t)),\exp_{x^{*}}^{-1}(y)\right\rangle \\
				& \stackrel{(3)}{=}0,
			\end{align*}
			where for $\stackrel{(1)}{\leq}$ we used Lemma~\ref{lem:rev_toponogov} with $p\leftarrow\gamma_{i}(t),x\leftarrow x^{*},y\leftarrow y$, and for $\stackrel{(2)}{\leq}$ we used Lemma~\ref{lem:rev_toponogov} with $p\leftarrow x^{*},x\leftarrow\gamma_{i}(t),y\leftarrow y$. The 
			equality $\stackrel{(3)}{=}$ follows from the fact that $x^*$ minimizes $f_L$:
			$$0 = \nabla f_L(x^*) = \frac{1}{m} \sum_{i=1}^m L^2 \exp^{-1}_{x^*}(\gamma_i(t)).$$
			Overall, we conclude that $\frac{d}{dL}h(L) \leq 0$, and so $h$ is non-increasing as claimed.
			
			\subsection{Proof of Theorem~\ref{thm:subg-descent-c}: Subgradient descent for Lipschitz and h-convex functions}
			\label{app:thm:subg-descent-c}
			
			We use the following result from g-convex geometry.
			\begin{lemma}[{\cite[Thm.~2.2.22]{bacak2014convex}}]
				\label{lem:proj}
				Let $C\subseteq M$ be a closed g-convex set. Then for any $x,y\in M$, we have $d\left(\mathcal{P}_{C}(x),\mathcal{P}_{C}(y)\right)\leq d(x,y)$, where $\mathcal{P}_C$ is the metric projection onto $C$.
			\end{lemma}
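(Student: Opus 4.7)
I will denote $p = \mathcal{P}_C(x)$ and $q = \mathcal{P}_C(y)$. The plan is to combine first-order optimality at both projections with the convexity of squared distance along geodesic interpolations, a classical feature of Hadamard manifolds. Since $p$ minimizes the strongly g-convex function $z \mapsto \tfrac{1}{2}d(x,z)^2$ over the g-convex set $C$, using $\nabla_z \tfrac{1}{2}d(x,z)^2 = -\exp_z^{-1}(x)$ from \eqref{eq:graddist}, the variational inequality reads $\langle \exp_p^{-1}(x), \exp_p^{-1}(z)\rangle \leq 0$ for every $z \in C$. Taking $z = q$, and symmetrically swapping the roles of $(x,p)$ and $(y,q)$, we get
\[
\langle \exp_p^{-1}(x), \exp_p^{-1}(q)\rangle \leq 0 \quad \text{and} \quad \langle \exp_q^{-1}(y), \exp_q^{-1}(p)\rangle \leq 0.
\]

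Next, I would set $a(t) = \gamma_x^p(t)$ and $b(t) = \gamma_y^q(t)$ on $[0,1]$, and define $\phi(t) = d(a(t), b(t))^2$, so that $\phi(0) = d(x,y)^2$ and $\phi(1) = d(p,q)^2$; it suffices to prove $\phi(1) \leq \phi(0)$. A standard Hadamard/CAT(0) property asserts that $t \mapsto d(a(t),b(t))$ is convex for any two geodesics, and composing with the convex non-decreasing map $s \mapsto s^2$ on $[0,\infty)$ shows that $\phi$ is convex. Applying the first-variation formula via the chain rule with $\nabla_u \tfrac{1}{2}d(u,v)^2 = -\exp_u^{-1}(v)$, at $t=1$, using $\dot a(1) = -\exp_p^{-1}(x)$ and $\dot b(1) = -\exp_q^{-1}(y)$, yields
\[
\tfrac{1}{2} \phi'(1) = \langle \exp_p^{-1}(q), \exp_p^{-1}(x)\rangle + \langle \exp_q^{-1}(p), \exp_q^{-1}(y)\rangle \leq 0,
\]
by the first-order conditions above. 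Since $\phi$ is convex on $[0,1]$ with $\phi'(1) \leq 0$, the monotonicity of slopes of convex functions forces $\phi' \leq 0$ throughout $[0,1]$, so $\phi$ is non-increasing and $\phi(1) \leq \phi(0)$, which is the desired inequality.

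The main obstacle will be the careful bookkeeping of signs and endpoint velocities in the first-variation calculation applied to a bivariate function along two simultaneously varying geodesics, together with the handling of the degenerate cases $x = p$ or $y = q$, in which $\dot a(1)$ or $\dot b(1)$ vanishes (the corresponding inner product term then simply drops and the argument still goes through). Beyond that, everything reduces to standard Hadamard ingredients: strong g-convexity of squared distance (so that $\mathcal{P}_C$ is well-defined and the variational inequality applies), convexity of the distance between two geodesics, and the gradient identity \eqref{eq:graddist}.
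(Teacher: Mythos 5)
Your argument is correct: the obtuse-angle (variational) inequalities at $p=\mathcal{P}_C(x)$ and $q=\mathcal{P}_C(y)$ are exactly the first-order optimality conditions for minimizing the smooth, strongly g-convex squared distance over the g-convex set $C$, the function $\phi(t)=d(a(t),b(t))^2$ is smooth and convex on a Hadamard manifold (squared distance is smooth even on the diagonal, so non-differentiability of $d$ is not an issue), and the first-variation computation with $\dot a(1)=-\exp_p^{-1}(x)$, $\dot b(1)=-\exp_q^{-1}(y)$ gives $\phi'(1)\leq 0$, whence convexity yields $\phi(1)\leq\phi(0)$. Note that the paper offers no proof of this lemma at all --- it only cites \cite[Thm.~2.2.22]{bacak2014convex} --- and your argument is essentially the standard CAT(0) proof (obtuse-angle property of projections plus joint convexity of the distance along two geodesics), specialized to the smooth Riemannian setting where one can phrase it via gradients and the identity \eqref{eq:graddist}.
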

			We now prove Theorem~\ref{thm:subg-descent-c}. Using the h-convexity of each $f_i$, we have
			\begin{equation}
				\label{eq:subgd-c-ineq1}
				\begin{aligned}
					f(x_{k})-f(x^{*}) & =\frac{1}{m}\sum_{i=1}^{m}\left(f_{i}(x_{k})-f_{i}(x^{*})\right)
					\leq-\frac{1}{m}\sum_{i=1}^{m}B_{x_{k},g_{ik}}(x^{*}).
				\end{aligned}
			\end{equation}
			The function $\varphi(x)=\frac{1}{m}\sum_{i=1}^{m}Q_{x_{k}, g_{ik}}^{1/s_k}\left(x\right)$
			is $\frac{1}{s_k}$-strongly g-convex and minimized at $x_{k+1}'$, so 
			$$\varphi(x^*) 
			= \frac{1}{m}\sum_{i=1}^{m}Q_{x_{k},g_{ik}}^{1/s_k}\left(x^*\right) 
			\geq \frac{1}{2 s_k} d(x^*, x_{k+1}')^2 + \min_{x \in M}\bigg\{\frac{1}{m}\sum_{i=1}^{m}Q_{x_{k},g_{ik}}^{1/s_k}\left(x\right)\bigg\}.$$
			On the other hand Lemma~\ref{lem:horotriangle} implies
			$$\varphi(x^*) 
			= \frac{1}{m}\sum_{i=1}^{m}Q_{x_{k},g_{ik}}^{1/s_k}\left(x^*\right) 
			\leq  \frac{1}{m}\sum_{i=1}^{m}B_{x_{k},g_{ik}}\left(x^*\right) + \frac{1}{2 s_k} d(x^*, x_{k})^2.$$
			Combining these last two inequalities yields
			\begin{align}\label{eq:subgd-c-ineq2}
				-\frac{1}{m}\sum_{i=1}^{m}B_{x_{k},g_{ik}}\left(x^*\right) 
				\leq
				\frac{1}{2 s_k}\Big(d(x^*, x_k)^2 - d(x^*, x_{k+1}')^2\Big) - \min_{x \in M}\bigg\{\frac{1}{m}\sum_{i=1}^{m}Q_{x_{k},g_{ik}}^{1/s_k}\left(x\right)\bigg\}.
			\end{align}
			Using $\frac{1}{s_k}$-strong g-convexity of $Q_{x_k, g_{ik}}^{1/s_k}$, we have
			\begin{equation}\label{eq:subgd-c-ineq3}
				\begin{split}
					\min_{x \in M}\bigg\{\frac{1}{m}\sum_{i=1}^{m}Q_{x_{k},g_{ik}}^{1/s_k}\left(x\right)\bigg\}
					& \geq\inf_{x\in M}\left\{ \frac{1}{2s_{k}}d\left(x_{k},x\right)^{2}+\frac{1}{m}\sum_{i=1}^{m}\left\langle g_{ik},\exp^{-1}_{x_{k}}(x)\right\rangle \right\} \\
					& =\inf_{x\in M}\left\{ \frac{1}{2s_{k}}\left\Vert \exp^{-1}_{x_{k}}(x)\right\Vert ^{2}+\left\langle g_{k},\exp^{-1}_{x_{k}}(x)\right\rangle \right\} \\
					& =-\frac{s_{k}}{2}\left\Vert g_{k}\right\Vert ^{2} \geq -\frac{L^2 s_{k}}{2},
				\end{split}
			\end{equation}
			where $g_k=\frac{1}{m}\sum_{i=1}^{m} g_{ik}$. 
			The last inequality follows from the $L$-Lipschitz continuity of $f$.

			Combining \eqref{eq:subgd-c-ineq1}, \eqref{eq:subgd-c-ineq2} and \eqref{eq:subgd-c-ineq3}, we have
			\begin{align*}
				f\left(x_{k}\right)-f\left(x^{*}\right) & \leq-\frac{1}{m}\sum_{i=1}^{m}B_{x_{k},g_{ik}}\left(x^{*}\right)\\
				& \leq\frac{1}{2s_{k}}d\left(x_{k},x^{*}\right)^{2}-\frac{1}{2s_{k}}d\left(x_{k+1}',x^{*}\right)^{2}+\frac{L^{2}s_{k}}{2} \\
				& \leq\frac{1}{2s_{k}}d\left(x_{k},x^{*}\right)^{2}-\frac{1}{2s_{k}}d\left(x_{k+1},x^{*}\right)^{2}+\frac{L^{2}s_{k}}{2},
			\end{align*}
			where the final inequality follows from Lemma~\ref{lem:proj}.
			Using $s_{k}=\frac{D}{L\sqrt{N+1}}$ and summing the inequalities over $k$, we have
			\begin{align*}
				\frac{1}{N+1}\sum_{k=0}^{N}\left(f\left(x_{k}\right)-f\left(x^{*}\right)\right) & \leq\frac{L}{2D\sqrt{N+1}}\left(d\left(x_{0},x^{*}\right)^{2}-d\left(x_{N+1},x^{*}\right)^{2}\right)+\frac{DL}{2\sqrt{N+1}}\\
				& \leq\frac{DL}{2\sqrt{N+1}}+\frac{DL}{2\sqrt{N+1}}
				=\frac{DL}{\sqrt{N+1}}.
			\end{align*}
			Using the g-convexity of $f$, we have
			\begin{align*}
				f\left(\bar{x}_{N}\right)-f(x^{*}) & \leq\frac{\sum_{k=0}^{N}f\left(x_{k}\right)}{N+1}-f(x^{*})
				=\frac{1}{N+1}\sum_{k=0}^{N}\left(f(x_{k})-f(x^{*})\right)
				\leq\frac{DL}{\sqrt{N+1}}.
			\end{align*}
			This completes the proof.
			
			\subsection{Proof of Theorem~\ref{thm:subg-descent-sc}: Subgradient descent for Lipschitz and strongly h-convex functions}
			\label{app:thm:subg-descent-sc}
			
			Using the $\mu$-strong h-convexity of each $f_i$, we have
			\begin{equation}
				\label{eq:subgd-sc-ineq1}
				\begin{aligned}
					f(x_{k})-f(x^{*}) & =\frac{1}{m}\sum_{i=1}^{m}\left(f_{i}(x_{k})-f_{i}(x^{*})\right)
					\leq-\frac{1}{m}\sum_{i=1}^{m}Q_{x_{k},g_{ik}}^{\mu}(x^{*}).
				\end{aligned}
			\end{equation}
			The function $\varphi(x)=\frac{1}{m}\sum_{i=1}^{m}Q_{x_{k}, g_{ik}}^{1/s_k}\left(x\right)$
			is $\frac{1}{s_k}$-strongly g-convex and minimized at $x_{k+1}'$, so 
			$$\varphi(x^*) 
			= \frac{1}{m}\sum_{i=1}^{m}Q_{x_{k},g_{ik}}^{1/s_k}\left(x^*\right) 
			\geq \frac{1}{2 s_k} d(x^*, x_{k+1}')^2 + \min_{x \in M}\bigg\{\frac{1}{m}\sum_{i=1}^{m}Q_{x_{k},g_{ik}}^{1/s_k}\left(x\right)\bigg\}.$$
			On the other hand Lemma~\ref{lem:horotriangle} implies
			$$\varphi(x^*) 
			= \frac{1}{m}\sum_{i=1}^{m}Q_{x_{k},g_{ik}}^{1/s_k}\left(x^*\right) 
			\leq  \frac{1}{m}\sum_{i=1}^{m}Q_{x_{k},g_{ik}}^\mu\left(x^*\right) + \Big(\frac{1}{2 s_k} - \frac{\mu}{2}\Big) d(x^*, x_{k})^2.$$
			Combining these last two inequalities yields
			\begin{align*}
				-\frac{1}{m}\sum_{i=1}^{m}Q_{x_{k},g_{ik}}^\mu\left(x^*\right) 
				\leq
				\Big(\frac{1}{2 s_k} -\frac{\mu}{2} \Big)d(x^*, x_k)^2 - \frac{1}{2 s_k} d(x^*, x_{k+1}')^2 - \min_{x \in M}\bigg\{\frac{1}{m}\sum_{i=1}^{m}Q_{x_{k},g_{ik}}^{1/s_k}\left(x\right)\bigg\}.
			\end{align*}
			Using Lemma~\ref{lem:proj} and the bound~\eqref{eq:subgd-c-ineq3} (a consequence of Lipschitz continuity of $f$), we obtain
			\begin{align}\label{eq:subgd-ssc-ineq3}
				-\frac{1}{m}\sum_{i=1}^{m}Q_{x_{k},g_{ik}}^\mu\left(x^*\right) 
				\leq
				\Big(\frac{1}{2 s_k} -\frac{\mu}{2} \Big)d(x^*, x_k)^2 - \frac{1}{2 s_k} d(x^*, x_{k+1})^2 + \frac{L^2 s_k}{2}.
			\end{align}
			Combining \eqref{eq:subgd-sc-ineq1} and \eqref{eq:subgd-ssc-ineq3}, we have
			\[
			f\left(x_{k}\right)-f\left(x^{*}\right)  \leq\left(\frac{1}{2s_{k}}-\frac{\mu}{2}\right)d\left(x_{k},x^{*}\right)^{2}-\frac{1}{2s_{k}}d\left(x_{k+1},x^{*}\right)^{2}+\frac{L^{2}s_{k}}{2}.
			\]
			Using $s_{k}=\frac{2}{\mu(k+2)}$ and taking the weighted sum of the inequalities over $k$ with weight $k+1$,
			\begin{align*}
				\sum_{k=0}^{N}(k+1)\left(f\left(x_{k}\right)-f\left(x^{*}\right)\right) & \leq\sum_{k=0}^{N}(k+1)\left[\frac{\mu k}{4}d\left(x_{k},x^{*}\right)^{2}-\frac{\mu(k+2)}{4}d\left(x_{k+1},x^{*}\right)^{2}+\frac{L^{2}}{\mu(k+2)}\right]\\
				& =-\frac{\mu(N+1)(N+2)}{4}d\left(x_{N+1},x^{*}\right)^{2}+\sum_{k=0}^{N}\frac{L^{2}(k+1)}{\mu(k+2)}
				\leq\frac{(N+1)L^{2}}{\mu}.
			\end{align*}
			Using the g-convexity of $f$, we have
			\begin{align*}
				f\left(\bar{x}_{N}\right)-f(x^{*}) & \leq\frac{2\sum_{k=0}^{N}(k+1)f\left(x_{k}\right)}{(N+1)(N+2)}-f(x^{*})\\
				& =\frac{2}{(N+1)(N+2)}\sum_{k=0}^{N}(k+1)\left(f(x_{k})-f(x^{*})\right)\\
				& \leq\frac{2}{(N+1)(N+2)}\frac{(N+1)L^{2}}{\mu}
				=\frac{2L^{2}}{\mu(N+2)}.
			\end{align*}
			This completes the proof.
			
			\subsection{Proof of Theorem~\ref{thm:agm-c}: Nesterov acceleration for smooth h-convex functions}
			\label{app:thm:agm-c}
			
			Using the h-convexity of each $f_{i}$, we have
			\begin{equation}
				\label{eq:agm-c-ineq1}
				\begin{aligned}
					f(y_{k})-f(x^{*}) & =\frac{1}{m}\sum_{i=1}^{m}\left(f_{i}(y_{k})-f_{i}(x^{*})\right)
					\leq-\frac{1}{m}\sum_{i=1}^{m}B_{y_{k},\grad f_{i}(y_{k})}(x^{*}).
				\end{aligned}
			\end{equation}
			By the g-convexity of $f$, we have
			\begin{equation}
				\label{eq:agm-c-ineq-g}
				f(y_{k})-f(x_{k})\leq-\left\langle \grad f(y_{k}),\exp^{-1}_{y_{k}}(x_{k})\right\rangle .
			\end{equation}
			Define the function $\varphi$ on $M$ as
			\[
			\varphi(x)=\frac{1}{2}d\left(z_{k},x\right)^{2}+\frac{k+1}{2Lm}\sum_{i=1}^{m}B_{y_{k},\grad f_{i}(y_{k})}\left(x\right).
			\]
			Then, we have
			\begin{align*}
				\inf_{x\in M}\varphi_{k}(x) & =\inf_{x\in M}\left\{ \frac{1}{2}d\left(z_{k},x\right)^{2}+\frac{k+1}{2Lm}\sum_{i=1}^{m}B_{y_{k},\grad f_{i}(y_{k})}\left(x\right),\right\} \\
				& \geq\inf_{x\in M}\left\{ \frac{1}{2}d\left(z_{k},x\right)^{2}+\frac{k+1}{2L}\left\langle \grad f(y_{k}),\exp^{-1}_{y_{k}}(x)\right\rangle \right\} \\
				& \geq\inf_{x\in M}\left\{ \frac{1}{2}\left\Vert \exp^{-1}_{y_{k}}(z_{k})-\exp^{-1}_{y_{k}}(x)\right\Vert ^{2}+\frac{k+1}{2L}\left\langle \grad f(y_{k}),\exp^{-1}_{y_{k}}(x)\right\rangle \right\} \\
				& =\frac{k+1}{2L}\left\langle \grad f(y_{k}),\exp^{-1}_{y_{k}}(z_{k})\right\rangle -\frac{(k+1)^{2}}{8L^{2}}\left\Vert \grad f(y_{k})\right\Vert ^{2},
			\end{align*}
			where the first inequality follows from the g-convexity of each $B_{y_{k},\grad f_{i}(y_{k})}^{\mu}$, and the second inequality follows from Lemma~\ref{lem:rev_toponogov}. Since $\varphi$ is $1$-strongly g-convex on $M$ and minimized at $z_{k+1}$, we have
			\[
			\varphi(x)\geq\frac{1}{2}d\left(z_{k+1},x\right)^{2}+\frac{k+1}{2L}\left\langle \grad f(y_{k}),\exp^{-1}_{y_{k}}(z_{k})\right\rangle -\frac{(k+1)^{2}}{8L^{2}}\left\Vert \grad f(y_{k})\right\Vert ^{2}.
			\]
			In particular, putting $x=x^*$ yields
			\begin{equation}
				\label{eq:agm-c-ineq2}
				\begin{aligned}
					& \frac{1}{2}d\left(z_{k},x^{*}\right)^{2}+\frac{k+1}{2Lm}\sum_{i=1}^{m}B_{y_{k},\grad f_{i}(y_{k})}\left(x^{*}\right)\\
					& \geq\frac{1}{2}d\left(z_{k+1},x^{*}\right)^{2}+\frac{k+1}{2L}\left\langle \grad f(y_{k}),\exp^{-1}_{y_{k}}(z_{k})\right\rangle -\frac{(k+1)^{2}}{8L^{2}}\left\Vert \grad f(y_{k})\right\Vert ^{2}.
				\end{aligned}
			\end{equation}
			By assumption on $f$, we have
			\begin{equation}
				\label{eq:agm-c-ineq3}
				f(x_{k+1})\leq f(y_{k})-\frac{1}{2L}\left\Vert \grad f(y_{k})\right\Vert ^{2}.
			\end{equation}
			Consider the energy function
			\[
			\mathcal{E}_{k}=\frac{1}{2}d(x^{*},z_{k})^{2}+\frac{k^{2}}{4L}(f(x_{k})-f(x^{*})).
			\]
			Then, we have
			\begin{align*}
				\mathcal{E}_{k+1}-\mathcal{E}_{k} & \leq\frac{(k+1)^{2}}{4L}(f(x_{k+1})-f(x^{*}))-\left(1-\frac{2}{k+1}\right)\frac{(k+1)^{2}}{4L}(f(x_{k})-f(x^{*}))\\
				& \quad+\frac{1}{2}d(x^{*},z_{k+1})^{2}-\frac{1}{2}d(x^{*},z_{k})^{2}\\
				& =\frac{2}{k+1}\frac{(k+1)^{2}}{4L}(f(y_{k})-f(x^{*}))\\
				& \quad+\left(1-\frac{2}{k+1}\right)\frac{(k+1)^{2}}{4L}(f(y_{k})-f(x_{k}))\\
				& \quad+\frac{(k+1)^{2}}{4L}(f(x_{k+1})-f(y_{k}))\\
				& \quad+\frac{1}{2}d(x^{*},z_{k+1})^{2}-\frac{1}{2}d(x^{*},z_{k})^{2}.
			\end{align*}
			Using the inequalities \eqref{eq:agm-sc-ineq1}, \eqref{eq:agm-sc-ineq-g}, \eqref{eq:agm-sc-ineq2}, and \eqref{eq:agm-sc-ineq3}, we have
			\begin{align*}
				\mathcal{E}_{k+1}-\mathcal{E}_{k} & \leq-\frac{2}{k+1}\frac{(k+1)^{2}}{4L}\frac{1}{m}\sum_{i=1}^{m}B_{y_{k},\grad f_{i}(y_{k})}(x^{*})\\
				& \quad-\left(1-\frac{2}{k+1}\right)\frac{(k+1)^{2}}{4L}\left\langle \grad f(y_{k}),\exp^{-1}_{y_{k}}(x_{k})\right\rangle \\
				& \quad-\frac{(k+1)^{2}}{8L^{2}}\left\Vert \grad f(y_{k})\right\Vert ^{2}\\
				& \quad-\frac{k+1}{2Lm}\sum_{i=1}^{m}B_{y_{k},\grad f_{i}(y_{k})}\left(x^{*}\right)-\frac{k+1}{2L}\left\langle \grad f(y_{k}),\exp^{-1}_{y_{k}}(z_{k})\right\rangle \\
				& \quad +\frac{(k+1)^{2}}{8L^{2}}\left\Vert \grad f(y_{k})\right\Vert ^{2}\\
				& =-\frac{(k+1)^{2}}{4L}\left\langle \grad f(y_{k}),\left(1-\frac{2}{k+1}\right)\exp^{-1}_{y_{k}}(x_{k})+\frac{2}{k+1}\exp^{-1}_{y_{k}}(z_{k})\right\rangle \\
				& =0,
			\end{align*}
			where the last equality follows from $y_{k}=\exp_{x_{k}}\left(\frac{2}{k+1}\exp^{-1}_{x_{k}}(z_{k})\right)$. Thus, $\mathcal{E}_{k}$ is non-increasing. Hence, we have
			\[
			f(x_{N})-f(x^{*})\leq\frac{4L}{N^{2}}\mathcal{E}_{N}\leq\frac{4L}{N^{2}}\mathcal{E}_{0}=\frac{2L}{N^{2}}d(x_{0},x^{*})^{2}.
			\]
			
			\subsection{Proof of Theorem~\ref{thm:agm-sc}: Nesterov acceleration for smooth strongly h-convex functions}
			\label{app:thm:agm-sc}
			
			Using the $\mu$-strong h-convexity of each $f_{i}$, we have
			\begin{equation}
				\label{eq:agm-sc-ineq1}
				\begin{aligned}
					f(y_{k})-f(x^{*}) & =\frac{1}{m}\sum_{i=1}^{m}\left(f_{i}(y_{k})-f_{i}(x^{*})\right)
					\leq-\frac{1}{m}\sum_{i=1}^{m}Q_{y_{k},\grad f_{i}(y_{k})}^{\mu}(x^{*}).
				\end{aligned}
			\end{equation}
			By the g-convexity of $f$, we have
			\begin{equation}
				\label{eq:agm-sc-ineq-g}
				f(y_{k})-f(x_{k})\leq-\left\langle \grad f(y_{k}),\exp^{-1}_{y_{k}}(x_{k})\right\rangle .
			\end{equation}
			Define a function $\varphi$ on $M$ as
			\[
			\varphi(x)=\left(1-\sqrt{\frac{\mu}{L}}\right)\frac{\mu}{2}d\left(z_{k},x\right)^{2}+\sqrt{\frac{\mu}{L}}\frac{1}{m}\sum_{i=1}^{m}Q_{y_{k},\grad f_{i}(y_{k})}^{\mu}\left(x\right).
			\]
			Then, we have
			\begin{align*}
				\inf_{x\in M}\varphi_{k}(x) & =\inf_{x\in M}\left\{ \left(1-\sqrt{\frac{\mu}{L}}\right)\frac{\mu}{2}d\left(z_{k},x\right)^{2}+\sqrt{\frac{\mu}{L}}\frac{1}{m}\sum_{i=1}^{m}Q_{y_{k},\grad f_{i}(y_{k})}^{\mu}\left(x\right).\right\} \\
				& \geq\inf_{x\in M}\left\{ \left(1-\sqrt{\frac{\mu}{L}}\right)\frac{\mu}{2}d\left(z_{k},x\right)^{2}+\sqrt{\frac{\mu}{L}}\left(\left\langle \grad f(y_{k}),\exp^{-1}_{y_{k}}(x)\right\rangle +\frac{\mu}{2}d\left(y_{k},x\right)^{2}\right)\right\} \\
				& \geq\inf_{x\in M}\Bigg\{\left(1-\sqrt{\frac{\mu}{L}}\right)\frac{\mu}{2}\left\Vert \exp^{-1}_{y_{k}}(z_{k})-\exp^{-1}_{y_{k}}(x)\right\Vert ^{2}\\
				& \qquad+\sqrt{\frac{\mu}{L}}\left(\left\langle \grad f(y_{k}),\exp^{-1}_{y_{k}}(x)\right\rangle +\frac{\mu}{2}\left\Vert \exp^{-1}_{y_{k}}(x)\right\Vert ^{2}\right)\Bigg\}\\
				& =\inf_{x\in M}\Bigg\{\frac{\mu}{2}\left\Vert \exp^{-1}_{y_{k}}(x)\right\Vert ^{2}+\left\langle \sqrt{\frac{\mu}{L}}\grad f(y_{k})-\mu\left(1-\sqrt{\frac{\mu}{L}}\right)\exp^{-1}_{y_{k}}(z_{k}),\exp^{-1}_{y_{k}}(x)\right\rangle \\
				& \qquad+\left(1-\sqrt{\frac{\mu}{L}}\right)\frac{\mu}{2}\left\Vert \exp^{-1}_{y_{k}}(z_{k})\right\Vert ^{2}\Bigg\}\\
				& =-\frac{\mu}{2}\left\Vert \frac{1}{\mu}\sqrt{\frac{\mu}{L}}\grad f(y_{k})-\left(1-\sqrt{\frac{\mu}{L}}\right)\exp^{-1}_{y_{k}}(z_{k})\right\Vert ^{2}+\left(1-\sqrt{\frac{\mu}{L}}\right)\frac{\mu}{2}\left\Vert \exp^{-1}_{y_{k}}(z_{k})\right\Vert ^{2}\\
				& =\frac{\mu}{2}\sqrt{\frac{\mu}{L}}\left(1-\sqrt{\frac{\mu}{L}}\right)\left\Vert \exp^{-1}_{y_{k}}(z_{k})\right\Vert ^{2}+\sqrt{\frac{\mu}{L}}\left(1-\sqrt{\frac{\mu}{L}}\right)\left\langle \exp^{-1}_{y_{k}}(z_{k}),\grad f(y_{k})\right\rangle \\
				& \quad-\frac{1}{2L}\left\Vert \grad f(y_{k})\right\Vert ^{2},
			\end{align*}
			where the first inequality follows from the $\mu$-strong g-convexity of each $Q_{y_{k},\grad f_{i}(y_{k})}^{\mu}$, and the second inequality follows from Lemma~\ref{lem:rev_toponogov}. Since $\varphi$ is $\mu$-strongly g-convex on $M$ and minimized at $z_{k+1}$, we have
			\begin{align*}
				\varphi(x) & \geq\frac{\mu}{2}d\left(z_{k+1},x\right)^{2}+\frac{\mu}{2}\sqrt{\frac{\mu}{L}}\left(1-\sqrt{\frac{\mu}{L}}\right)\left\Vert \exp^{-1}_{y_{k}}(z_{k})\right\Vert ^{2}\\
				& \quad+\sqrt{\frac{\mu}{L}}\left(1-\sqrt{\frac{\mu}{L}}\right)\left\langle \exp^{-1}_{y_{k}}(z_{k}),\grad f(y_{k})\right\rangle -\frac{1}{2L}\left\Vert \grad f(y_{k})\right\Vert ^{2}.
			\end{align*}
			In particular, putting $x=x^*$ yields
			\begin{equation}
				\label{eq:agm-sc-ineq2}
				\begin{aligned}
					& \left(1-\sqrt{\frac{\mu}{L}}\right)\frac{\mu}{2}d\left(z_{k},x^{*}\right)^{2}+\sqrt{\frac{\mu}{L}}\frac{1}{m}\sum_{i=1}^{m}Q_{y_{k},\grad f_{i}(y_{k})}^{\mu}\left(x^{*}\right)\\
					& \geq\frac{\mu}{2}d\left(z_{k+1},x^{*}\right)^{2}+\frac{\mu}{2}\sqrt{\frac{\mu}{L}}\left(1-\sqrt{\frac{\mu}{L}}\right)\left\Vert \exp^{-1}_{y_{k}}(z_{k})\right\Vert ^{2}\\
					& \quad+\sqrt{\frac{\mu}{L}}\left(1-\sqrt{\frac{\mu}{L}}\right)\left\langle \exp^{-1}_{y_{k}}(z_{k}),\grad f(y_{k})\right\rangle -\frac{1}{2L}\left\Vert \grad f(y_{k})\right\Vert ^{2}.
				\end{aligned}
			\end{equation}
			By assumption on $f$, we have
			\begin{equation}
				\label{eq:agm-sc-ineq3}
				f(x_{k+1})\leq f(y_{k})-\frac{1}{2L}\left\Vert \grad f(y_{k})\right\Vert ^{2}.
			\end{equation}
			Consider the energy function
			\[
			\mathcal{E}_{k}=\left(1-\sqrt{\frac{\mu}{L}}\right)^{-k}\left(f(x_{k})-f(x^{*})+\frac{\mu}{2}d(z_{k},x^{*})^{2}\right).
			\]
			Then, we have
			\begin{align*}
				\left(1-\sqrt{\frac{\mu}{L}}\right)^{k+1}\left(\mathcal{E}_{k+1}-\mathcal{E}_{k}\right) & =(f(x_{k+1})-f(x^{*}))-\left(1-\sqrt{\frac{\mu}{L}}\right)(f(x_{k})-f(x^{*}))\\
				& \quad+\frac{\mu}{2}d\left(z_{k+1},x^{*}\right)^{2}-\left(1-\sqrt{\frac{\mu}{L}}\right)\frac{\mu}{2}d\left(z_{k},x^{*}\right)^{2}\\
				& =\sqrt{\frac{\mu}{L}}(f(y_{k})-f(x^{*}))\\
				& \quad+\left(1-\sqrt{\frac{\mu}{L}}\right)(f(y_{k})-f(x_{k}))\\
				& \quad+f(x_{k+1})-f(y_{k})\\
				& \quad+\frac{\mu}{2}d\left(z_{k+1},x^{*}\right)^{2}-\left(1-\sqrt{\frac{\mu}{L}}\right)\frac{\mu}{2}d\left(z_{k},x^{*}\right)^{2}.
			\end{align*}
			Using the inequalities \eqref{eq:agm-sc-ineq1}, \eqref{eq:agm-sc-ineq-g}, \eqref{eq:agm-sc-ineq2}, and \eqref{eq:agm-sc-ineq3}, we have
			\begin{align*}
				\left(1-\sqrt{\frac{\mu}{L}}\right)^{k+1}\left(\mathcal{E}_{k+1}-\mathcal{E}_{k}\right) & \leq-\sqrt{\frac{\mu}{L}}\frac{1}{m}\sum_{i=1}^{m}Q_{y_{k},\grad f_{i}(y_{k})}^{\mu}(x^{*})\\
				& \quad-\left(1-\sqrt{\frac{\mu}{L}}\right)\left\langle \grad f(y_{k}),\exp^{-1}_{y_{k}}(x_{k})\right\rangle \\
				& \quad-\frac{1}{2L}\left\Vert \grad f(y_{k})\right\Vert ^{2}\\
				& \quad+\sqrt{\frac{\mu}{L}}\frac{1}{m}\sum_{i=1}^{m}Q_{y_{k},\grad f_{i}(y_{k})}^{\mu}\left(x^{*}\right)-\frac{\mu}{2}\sqrt{\frac{\mu}{L}}\left(1-\sqrt{\frac{\mu}{L}}\right)\left\Vert \exp^{-1}_{y_{k}}(z_{k})\right\Vert ^{2}\\
				& \quad-\sqrt{\frac{\mu}{L}}\left(1-\sqrt{\frac{\mu}{L}}\right)\left\langle \exp^{-1}_{y_{k}}(z_{k}),\grad f(y_{k})\right\rangle +\frac{1}{2L}\left\Vert \grad f(y_{k})\right\Vert ^{2}\\
				& =-\left(1-\sqrt{\frac{\mu}{L}}\right)\left\langle \grad f(y_{k}),\exp^{-1}_{y_{k}}(x_{k})+\sqrt{\frac{\mu}{L}}\exp^{-1}_{y_{k}}(z_{k})\right\rangle \\
				& \quad-\frac{\mu}{2}\sqrt{\frac{\mu}{L}}\left(1-\sqrt{\frac{\mu}{L}}\right)\left\Vert \exp^{-1}_{y_{k}}(z_{k})\right\Vert ^{2}\\
				& \leq-\left(1-\sqrt{\frac{\mu}{L}}\right)\left\langle \grad f(y_{k}),\exp^{-1}_{y_{k}}(x_{k})+\sqrt{\frac{\mu}{L}}\exp^{-1}_{y_{k}}(z_{k})\right\rangle \\
				& =0,
			\end{align*}
			where the last equality follows from $y_{k}=\exp_{x_{k}}\left(\frac{\sqrt{\mu/L}}{1+\sqrt{\mu/L}}\exp^{-1}_{x_{k}}(z_{k})\right)$. Thus, $\mathcal{E}_{k}$ is non-increasing. Hence, we have
			\[
			f(x_{N})-f(x^{*})\leq\left(1-\sqrt{\frac{\mu}{L}}\right)^{N}\mathcal{E}_{N}\leq\left(1-\sqrt{\frac{\mu}{L}}\right)^{N}\mathcal{E}_{0}=\left(1-\sqrt{\frac{\mu}{L}}\right)^{N}\left(f(x_{0})-f(x^{*})+\frac{\mu}{2}d(x_{0},x^{*})^{2}\right).
			\]
			
			\section{Faster rates in hyperbolic space: Details for Section~\ref{sec:faster_rates}}
			
			\subsection{Proof of Proposition~\ref{prop:thisone}}\label{app:faster-hyper2}
			
			We prove the contrapositive, so
			assume that $f(x_k) - f^* \geq L \delta$ for all $k = 0, \ldots N-1$.
			Then the subgradients $g_k$ are non-zero, and h-convexity of $f$ implies 
			\begin{align*}
				-L \delta \geq f^* - f(x_k) \geq B_{x_k, g_k}(x^*)
				= \|g_k\| B_{x_k, g_k / \|g_k\|}(x^*)
				\stackrel{(1)}{\geq} L B_{x_k, g_k / \|g_k\|}(x^*)
			\end{align*}
			for all $k = 0, \ldots, N$.
			Inequality $\stackrel{(1)}{\geq}$ follows from $\|g_k\| \leq L$ and $B_{x,g_{k}/\|g_{k}\|}(x^{*})\leq0$.
			Therefore, $x^*$ is contained in the intersection of the ball $B_k = \bar{B}(x_k, d(x_k, x^*))$ and the horoball
			$$\{x \in M : - \delta \geq B_{x_k, g_k/\|g_k\|}(x)\},$$
			which is the horoball associated to the geodesic ray $t \mapsto \exp_{x_k}(-t g_k)$ whose boundary contains
			$x_{k+1} = \exp_{x_k}(-\delta g_k/\|g_k\|)$.
			That horoball is contained in the supporting half-space
			$${H}_k = \{x \in M : \langle \exp^{-1}_{x_{k+1}}(x), \Gamma_{x_k}^{x_{k+1}} g_k \rangle \leq 0\}.$$
			
			Let $q \in \bd(B_k) \cap \bd(H_k)$ and let $r_{k+1} = d(x_{k+1}, q)$ --- $r_{k+1}$ is, of course, independent of the choice of $q$ due to the symmetries of hyperbolic space.\footnote{$\bd(B_k) \cap \bd(H_k)$ is non-empty by the intermediate value theorem: choose some curve belonging to $\bd(B_k)$ from $\exp_{x_{k}}(\frac{d(x_{k},x^{*})}{d(x_{k},x_{k+1})}\exp_{x_{k}}^{-1}(x_{k+1}))$ to $\exp_{x_{k}}(-\frac{d(x_{k},x^{*})}{d(x_{k},x_{k+1})}\exp_{x_{k}}^{-1}(x_{k+1}))$.  Then the function $\langle\exp_{x_{k+1}}^{-1}(\cdot),\Gamma_{x_{k}}^{x_{k+1}}g_{k}\rangle$ takes value $0$ somewhere along that curve.}
			Since $q x_k x_{k+1}$ is a hyperbolic triangle with the right angle at $x_{k+1}$, the hyperbolic law of cosines gives 
			$$\cosh(r_{k+1})=\frac{\cosh(d(x_{k},q))}{\cosh(\delta)}=\frac{\cosh(d(x_{k},x^{*}))}{\cosh(\delta)},$$ 
			where the second equality follows from $q\in\bd (B_{k}) = \bd\big(\bar{B}(x_k, d(x_k, x^*))\big)$.
			
			Note that $\bar{B}(x_{k+1}, r_{k+1})$ contains $B_k \cap H_k \ni x^*$ by construction --- this is easy to see by visualizing in the Poincar\'e ball model centered at $x_{k+1}$. 
			Therefore,
			\[
			\cosh(d(x_{k+1},x^{*}))\leq\cosh(r_{k+1})=\frac{\cosh(d(x_{k},x^{*}))}{\cosh(\delta)}.
			\]
			Using $d(x_0, x^*) = d(p, x^*) \leq r$, we conclude
			$1 \leq \cosh(d(x_{N}, x^*)) \leq \cosh(r) / \cosh(\delta)^N$.
			Therefore, we must have $N \leq \frac{\log \cosh(r)}{\log \cosh(\delta)}$, as desired.
			
			For the final observation in the proposition statement, we have
			\[
			\frac{\log\cosh(r)}{\log\cosh(\delta)}\leq\frac{\log(e^{r})}{(\delta/4)\min\{1,\delta\}}=\frac{4r}{\delta}\max\{1,\frac{1}{\delta}\},
			\]
			using $e^t \geq \cosh(t)$ and $\log \cosh(t) \geq \frac{t}{4} \min\{1, t\}$ for $t \geq 0$.
			
			\subsection{Proof of Proposition~\ref{prop:anotherone}}\label{app:faster-hyper3}
			
			\begin{SCfigure}[50][t]
				\label{fig:twohoroballs}
				\includegraphics[width=0.6\textwidth]{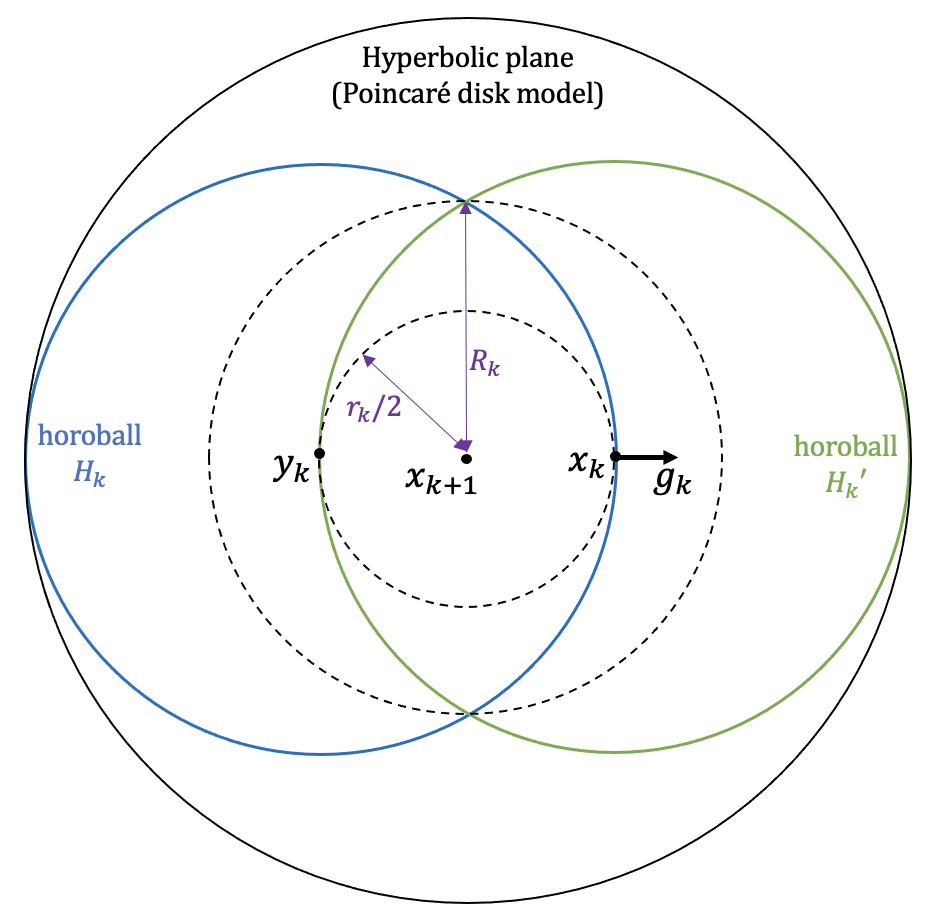}
				\caption{Bounding the intersection of two horoballs with a ball, in the Poincar\'e disk model.}
			\end{SCfigure}
			
			Let $r_k = r e^{-k/4}$.
			Let $x^* \in \arg\min_{x\in \bar{B}(p, r)}f(x)$.
			We shall show, by induction, that if $r_{k} \geq 4$, then $d(x_{k+1}, x^*) \leq r_{k+1} = r_k e^{-1/4}$. 
			Consequently, after $N=\lceil4\log(r/4)\rceil$ iterations, we must have $d(x_N, x^*) \leq 4$, which would prove the claim. 
			
			The base case $k=0$ follows immediately from $x_0 = p$ and $x^* \in \bar{B}(p, r)$.
			Now let us prove the inductive step.
			Assume $d(x_k, x^*) \leq r_k$.
			Consider the horoball $H_k'$ associated with the geodesic ray $t \mapsto \exp_{x_k}(t g_k)$ and whose boundary contains $y_k = \exp_{x_k}(- r_k \frac{g_k}{\|g_k\|})$.
			Since balls are h-convex, this horoball $H_k'$ contains the ball $\bar{B}(x_k, r_k)$, and thus $x^*$ too.
			
			On the other hand, $x^*$ is contained in the horoball $H_k = \{x \in M : B_{x_k, g_k}(x) \leq 0\}$, because $f$ is h-convex:
			$$0 \geq f^* - f(x_k) \geq B_{x_k, g_k}(x^*).$$
			Let $x_{k+1} = \exp_{x_k}(- \frac{r_k}{2} \frac{g_k}{\|g_k\|})$.
			Figure~\ref{fig:twohoroballs} shows the setup in the Poincar\'e disk model where the origin is placed at $x_{k+1}$.
			From that figure and a simple calculation,\footnote{Specifically: the \emph{Euclidean} radii of the inner and outer balls in the disk model are $\tanh((r_k/2)/2)$ and $\tanh(R_k/2)$, respectively.  Euclidean geometry then tells us that $\tanh(R_k/2) = \sqrt{\tanh((r_k/2)/2)}$.  Solving for $R_k$ yields $R_k = \arccosh(e^{r_k/2})$.}
			we see that $H_k \cap H_k'$ is contained in a ball of radius $R_k = \arccosh(e^{r_k/2}) \leq \frac{r_k}{2} + 1$ centered at $x_{k+1}$. 
			Since $x^* \in H_k \cap H_k'$, we find
			$$d(x_{k+1}, x^*) \leq \frac{r_k}{2} + 1 \leq e^{-1/4} r_k = r_{k+1},$$
			where the last inequality follows from assuming $r_k \geq 4$.
			
		\end{document}